\theoremstyle{definition}
\newtheorem{Def}{Definition}
\newtheorem{definition}[Def]{Definition}
\newtheorem{theorem}[Def]{Theorem}
\newtheorem{proposition}[Def]{Proposition}
\newtheorem{lemma}[Def]{Lemma}
\newtheorem{corollary}[Def]{Corollary}
\newtheorem{example}[Def]{Example}
\newcommand{\sem}[1]{\ensuremath{\llbracket #1 \rrbracket}}
\newcommand{\G}{G}
\newcommand{\M}{M}
\newcommand{\g}{g}
\newcommand{\m}{m}
\newcommand{\K}{\ctx{K}}
\newcommand{\ctx}[1]{\mathbb{#1}}
\newcommand{\I}{\models}  
\newcommand{\pset}{\mathbb{P}}
\newcommand{\Sx}[1]{\underline{#1}}
\newcommand{\clat}{\mathbb{B}}
\newcommand{\op}{\ensuremath{\mathrm{\rm op}}}
\newcommand{\TV}{\mathsf{2}} 
\newcommand{\fcotimes}{\otimes}
\newcommand{\fco}{\fcotimes}
\newcommand{\xtimes}{\underline{\otimes}}
\newcommand{\myemph}[1]{\emph{#1}}
\newcommand{\cl}[1]{\overline{#1}}
\newcommand{\cls}[1]{\cl{#1}}
\newcommand{\twedge}{\owedge}
\newcommand{\tvee}{\ovee}
\newcommand{\cat}[1]{\ensuremath{\mathbf{#1}}}
\newcommand{\catC}{\cat{C}}
\newcommand{\Cxt}{\cat{Cxt}}
\newcommand{\Ctx}{\Cxt}
\newcommand{\Rel}{\cat{Rel}}
\newcommand{\InfLat}{\cat{InfLat}}
\newcommand{\SupLat}{\cat{SupLat}}
\newcommand{\CompLat}{\cat{CLat}}
\newcommand{\id}[1]{\ensuremath{\mathrm{id}_{#1}}}
\newcommand{\Bonds}{\cat{Bonds}}
\newcommand{\ChuCors}{\cat{ChuCors}}
\newcommand{\Ictx}{\ctx{I}}
\newcommand{\Icxt}{\Ictx}
\newcommand{\hilbH}{\mathcal{H}} 
\newcommand{\ve}{\varepsilon}
\newcommand{\lotimes}{\boxtimes}
\newcommand{\ua}{\bullet}
\newcommand{\discard}[1]{\ensuremath{\tinygroundflipnew_{#1}}}
\tikzstyle{whitedot}=[circle, draw=black, fill=white, inner sep=.4ex]
\tikzstyle{none}=[inner sep=0mm]
\tikzstyle{cdot}=[circle, draw=black, fill=black!25, inner sep=.4ex] 
\tikzstyle{bigdot}=[dot, inner sep=0pt]
\tikzstyle{whitedot}=[circle, draw=black, fill=white, inner sep=.4ex]
\tikzstyle{greydot}=[circle, draw=black, fill=black!25, inner sep=.4ex] 
\tikzstyle{blackdot}=[circle, draw=black, fill=black, inner sep=.4ex]
\tikzset{arrow/.style={decoration={
    markings,
    mark=at position #1 with \arrow{>[length=2pt, width=3pt]}},
    postaction=decorate},
    reverse arrow/.style={decoration={
    markings,
    mark=at position #1 with {{\arrow{<[length=2pt, width=3pt]}}}},
    postaction=decorate}
}
\newenvironment{pic}[1][] {\begin{aligned}\begin{tikzpicture}[scale=2.0, font=\tiny,#1]}{\end{tikzpicture}\end{aligned}} 
\newif\ifvflip\pgfkeys{/tikz/vflip/.is if=vflip}
\newif\ifhflip\pgfkeys{/tikz/hflip/.is if=hflip}
\newif\ifhvflip\pgfkeys{/tikz/hvflip/.is if=hvflip}
\newenvironment{picc}[1][]
{\begin{aligned}\begin{tikzpicture}[font=\tiny,#1]}
{\end{tikzpicture}\end{aligned}}
\newlength\minimummorphismwidth
\newlength\stateheight
\newlength\minimumstatewidth
\newlength\connectheight
\tikzset{colour/.initial=white}
\tikzstyle{pure}=[line width=.7pt]
\tikzstyle{none}=[inner sep=0mm]
\tikzstyle{every loop}=[]
\tikzstyle{mark coordinate}=[inner sep=0pt,outer sep=0pt,minimum size=3pt,fill=black,circle]
\tikzset{arrow/.style={decoration={
    markings,
    mark=at position #1 with \arrow{>[length=2pt, width=3pt]}},
    postaction=decorate},
    reverse arrow/.style={decoration={
    markings,
    mark=at position #1 with {{\arrow{<[length=2pt, width=3pt]}}}},
    postaction=decorate}
}
\tikzstyle{upground}=[circuit ee IEC,thick,ground,rotate=90,scale=1.5]
\tikzstyle{upgroundwhite}=[circuit ee IEC,thick,ground,rotate=90,scale=1.5, fill=white]
\tikzstyle{downground}=[circuit ee IEC,thick,ground,rotate=-90,scale=1.5]
\tikzstyle{downgroundnorm}=[circuit ee IEC,thick,ground,rotate=-90,scale=1.5, fill=white]
\newcommand{\mapminh}{5mm} 
\newcommand{\stateminh}{5mm}
\newcommand{\maplw}{0.7pt} 
\newcommand{\stateshift}{-0.2pt}
\newcommand{\effectshift}{-0.2pt}
\tikzstyle{box}=[map]
\tikzstyle{medium box}=[medium map]
\tikzstyle{dot}=[inner sep=0mm,minimum width=2mm,minimum height=2mm,draw,shape=circle]  
\tikzstyle{black dot}=[dot,fill=black]
\tikzstyle{white dot}=[dot,fill=white,,text depth=-0.2mm]
\tikzstyle{grey dot}=[dot,fill=black!25] 
\tikzstyle{corner1}=[box,fill=white, font=\footnotesize] %
\tikzstyle{corner2}=[dot,fill=white, font=\footnotesize] %
\tikzstyle{corner3}=[dot,fill=black!25, font=\footnotesize] %
\tikzstyle{corner4}=[dot,fill=black, font=\footnotesize] %
\tikzstyle{scalar}=[circle,draw,inner sep=2pt, line width=\maplw] 
\tikzset{stateshape/.style={append after command={
   \pgfextra
        \draw[sharp corners, fill=white, line width = \maplw]%
    (\tikzlastnode.west)%
    [rounded corners=0pt] |- (\tikzlastnode.north)%
    [rounded corners=0pt] -| (\tikzlastnode.east)%
    [rounded corners=5pt] |- (\tikzlastnode.south)%
    [rounded corners=5pt] -| (\tikzlastnode.west);
   \endpgfextra}}}
\tikzset{effectshape/.style={append after command={
   \pgfextra
        \draw[sharp corners, fill=white, line width = \maplw]%
    (\tikzlastnode.west)%
    [rounded corners=0pt] |- (\tikzlastnode.south)%
    [rounded corners=0pt] -| (\tikzlastnode.east)%
    [rounded corners=5pt] |- (\tikzlastnode.north)%
    [rounded corners=5pt] -| (\tikzlastnode.west);
   \endpgfextra}}}
 \tikzstyle{map}=[draw,shape=rectangle, inner sep=2pt,minimum height=\mapminh, minimum width=7mm,fill=white]
\tikzstyle{point}=[stateshape,inner sep=2pt, minimum width=6mm, minimum height=\stateminh, yshift=\stateshift]
\tikzstyle{copoint}=[effectshape,inner sep=.2pt, minimum width=6mm, minimum height=\stateminh, yshift=-\effectshift]
\tikzstyle{wide point}=[point, minimum width=12mm]
\tikzstyle{wide copoint}=[copoint, minimum width=12mm]
\tikzstyle{decomp}=[fill=white,draw,shape=isosceles triangle,shape border rotate=-90,isosceles triangle stretches=true,inner sep=0pt,minimum width=0.75cm,minimum height=4mm,yshift=-0.0mm]
\tikzstyle{decompwide}=[fill=white,draw,shape=isosceles triangle,shape border rotate=-90,isosceles triangle stretches=true,inner sep=0pt,minimum width=1.5cm,minimum height=4mm,yshift=-0.0mm]
\tikzstyle{decompflip}=[fill=white,draw,shape=isosceles triangle,shape border rotate=90,isosceles triangle stretches=true,inner sep=0pt,minimum width=0.75cm,minimum height=4mm,yshift=-0.0mm]
\tikzstyle{decompwideflip}=[fill=white,draw,shape=isosceles triangle,shape border rotate=90,isosceles triangle stretches=true,inner sep=0pt,minimum width=1.5cm,minimum height=4mm,yshift=-0.0mm]
 \tikzstyle{map}=[draw,shape=rectangle, inner sep=2pt,minimum height=\mapminh, minimum width=7mm,fill=white, line width = \maplw]
\tikzstyle{medium map} = [map, minimum width = 12mm] 
\tikzstyle{semilarge map} = [map, minimum width = 15mm] 
\tikzstyle{large map} = [map, minimum width = 18mm] 
\tikzstyle{kpoint} =[point]
\tikzstyle{kpointadj} =[copoint]
\tikzstyle{kpointconj}=[dagpointconj] 
\newcommand{\boxshape}[3]{%
\pgfdeclareshape{#1}{
\inheritsavedanchors[from=rectangle] 
\inheritanchorborder[from=rectangle]
\inheritanchor[from=rectangle]{center}
\inheritanchor[from=rectangle]{north}
\inheritanchor[from=rectangle]{south}
\inheritanchor[from=rectangle]{west}
\inheritanchor[from=rectangle]{east}
\backgroundpath{
\southwest \pgf@xa=\pgf@x \pgf@ya=\pgf@y
\northeast \pgf@xb=\pgf@x \pgf@yb=\pgf@y

\@tempdima=#2
\@tempdimb=#3

\pgfpathmoveto{\pgfpoint{\pgf@xa - 5pt + \@tempdima}{\pgf@ya}}
\pgfpathlineto{\pgfpoint{\pgf@xa - 5pt - \@tempdima}{\pgf@yb}}
\pgfpathlineto{\pgfpoint{\pgf@xb + 5pt + \@tempdimb}{\pgf@yb}}
\pgfpathlineto{\pgfpoint{\pgf@xb + 5pt - \@tempdimb}{\pgf@ya}}
\pgfpathlineto{\pgfpoint{\pgf@xa - 5pt + \@tempdima}{\pgf@ya}}
\pgfpathclose
}
}}
\tikzstyle{cloud}=[shape=cloud,draw,minimum width=1.5cm,minimum height=1.5cm]
\tikzstyle{dagmap}=[draw,shape=NEbox,inner sep=2pt,minimum height=\mapminh,fill=white, line width = \maplw] %
\tikzstyle{dashedmap}=[draw,dashed,shape=NEbox,inner sep=2pt,minimum height=\mapminh,fill=white, line width = \maplw]
\tikzstyle{mapdag}=[draw,shape=SEbox,inner sep=2pt,minimum height=\mapminh,fill=white, line width = \maplw]
\tikzstyle{mapadj}=[draw,shape=SEbox,inner sep=2pt,minimum height=\mapminh,fill=white, line width = \maplw]
\tikzstyle{maptrans}=[draw,shape=SWbox,inner sep=2pt,minimum height=\mapminh,fill=white, line width = \maplw]
\tikzstyle{mapconj}=[draw,shape=NWbox,inner sep=2pt,minimum height=\mapminh,fill=white, line width = \maplw]
\tikzstyle{medium dagmap}=[draw,shape=NEbox,inner sep=2pt,minimum height=\mapminh,fill=white,minimum width=7mm, line width = \maplw]
\tikzstyle{semilarge dagmap}=[draw,shape=NEbox,inner sep=2pt,minimum height=\mapminh,fill=white,minimum width=9.5mm, line width = \maplw]
\tikzstyle{large dagmap}=[draw,shape=NEbox,inner sep=2pt,minimum height=\mapminh,fill=white,minimum width=12mm, line width = \maplw]
\pgfmathsetmacro{\pgf@shorten@left}{\pgfkeysvalueof{/tikz/shorten left}}
\pgfmathsetmacro{\pgf@shorten@right}{\pgfkeysvalueof{/tikz/shorten right}}
\pgfmathsetmacro{\pgf@shorten@left}{\pgfkeysvalueof{/tikz/shorten left}}
\pgfmathsetmacro{\pgf@shorten@right}{\pgfkeysvalueof{/tikz/shorten right}}
\tikzstyle{dagpoint common}=[draw,fill=white,inner sep=1pt, line width = \maplw, minimum height = 4mm, yshift=1.2pt] 
\tikzstyle{dagpoint sc}=[shape=cornerpoint,dagpoint common]
\tikzstyle{dagpoint adjoint sc}=[shape=cornercopoint,dagpoint common]
\tikzstyle{dagpoint}=[shape=cornerpoint,shorten left=4pt,dagpoint common]
\tikzstyle{dagpointadj}=[shape=cornercopoint,shorten left=5pt,dagpoint common]
\tikzstyle{dagpointconj}=[shape=cornerpoint,shorten right=5pt,dagpoint common]
\tikzstyle{dagpointtrans}=[shape=cornercopoint,shorten right=5pt,dagpoint common]
\tikzstyle{dagpointsymm}=[shape=cornerpoint,shorten left=5pt,shorten right=5pt,dagpoint common]
\tikzstyle{widedagpoint}=[dagpoint, minimum width=1 cm, inner sep=2pt]
\tikzstyle{widedagpointadj}=[dagpointadj, minimum width=1 cm, inner sep=2pt]
\tikzstyle{every picture}=[baseline=-0.25em,scale=0.5]
\tikzstyle{label}=[font=\footnotesize,text height=1ex, text depth=0.15ex]
\tikzset{
sidetriangle/.style = {regular polygon, regular polygon sides = 3, aspect = 1, shape border rotate = 90, draw, inner sep = 0, minimum width = 1.2cm}
}
\tikzset{
isoc/.style = {shape=isosceles triangle, shape border rotate = 180, isosceles triangle stretches = true, minimum width = 1.2cm, minimum height= 1.5cm, inner sep = 0.3}}
\tikzset{
coarse/.style = {shape = circle, fill = white, draw, inner sep = 0, minimum width =0.125cm}
}
\tikzset{
coarsesymbol/.style = {shape = circle, fill = white, inner sep = -0.7, minimum width = 0.125cm}
}
\tikzstyle{sidetriangle2}=[sidetriangle, minimum width = 2cm, fill=white]
\tikzstyle{sideisocsmall}]=[style=isoc, minimum width = 1cm, minimum height = 0.8cm, draw, fill=white, font=\Large]
\tikzstyle{sideisoc}]=[style=isoc, minimum width = 2cm, draw, fill=white, font=\Large]
\tikzstyle{sideisocmid}]=[style=isoc, minimum width = 2.5cm, draw, fill=white, font=\Large]
\tikzstyle{sideisocmedium}]=[style=isoc, minimum width = 3cm, draw, fill=white, font=\Large]
\newcommand{\tinygroundflipnew}{
\smash{
{\hspace{-3pt}
\ensuremath{
\begin{picc}[yscale=-1.0] 
    \node[upground, xscale=0.8, yscale=-0.7] (1) at (0,0.10) {};
    \draw (0,-0.03) to (0,-0.31);
\end{picc}
}\hspace{-1pt}}}}
\tikzstyle{label}=[font=\footnotesize,text height=1ex, text depth=0.15ex]
\tikzstyle{box}=[map]
\tikzstyle{medium box}=[medium map]
\tikzstyle{dot}=[inner sep=0mm,minimum width=2mm,minimum height=2mm,draw,shape=circle]  
\tikzstyle{black dot}=[dot,fill=black]
\tikzstyle{white dot}=[dot,fill=white,,text depth=-0.2mm]
\tikzstyle{grey dot}=[dot,fill=black!25] 
\tikzstyle{corner1}=[box,fill=white, font=\footnotesize] %
\tikzstyle{corner2}=[dot,fill=white, font=\footnotesize] %
\tikzstyle{corner3}=[dot,fill=black!25, font=\footnotesize] %
\tikzstyle{corner4}=[dot,fill=black, font=\footnotesize] %
\tikzstyle{scalar}=[circle,draw,inner sep=2pt, line width=\maplw] 
\tikzset{stateshape/.style={append after command={
   \pgfextra
        \draw[sharp corners, fill=white, line width = \maplw]%
    (\tikzlastnode.west)%
    [rounded corners=0pt] |- (\tikzlastnode.north)%
    [rounded corners=0pt] -| (\tikzlastnode.east)%
    [rounded corners=5pt] |- (\tikzlastnode.south)%
    [rounded corners=5pt] -| (\tikzlastnode.west);
   \endpgfextra}}}
\tikzset{effectshape/.style={append after command={
   \pgfextra
        \draw[sharp corners, fill=white, line width = \maplw]%
    (\tikzlastnode.west)%
    [rounded corners=0pt] |- (\tikzlastnode.south)%
    [rounded corners=0pt] -| (\tikzlastnode.east)%
    [rounded corners=5pt] |- (\tikzlastnode.north)%
    [rounded corners=5pt] -| (\tikzlastnode.west);
   \endpgfextra}}}
 \tikzstyle{map}=[draw,shape=rectangle, inner sep=2pt,minimum height=\mapminh, minimum width=7mm,fill=white]
\tikzstyle{point}=[stateshape,inner sep=2pt, minimum width=6mm, minimum height=\stateminh, yshift=\stateshift]
\tikzstyle{copoint}=[effectshape,inner sep=.2pt, minimum width=6mm, minimum height=\stateminh, yshift=-\effectshift]
\tikzstyle{wide point}=[point, minimum width=12mm]
\tikzstyle{wide copoint}=[copoint, minimum width=12mm]
\title{\textbf{Monoidal Categories for Formal Concept Analysis}}
\title{Monoidal Categories for Formal Concept Analysis}
\author{Sean Tull}
\date{\vspace{-8pt}\small{Cambridge Quantum Computing \\ \url{sean.tull@cambridgequantum.com}}}
\begin{document}
\maketitle

\begin{abstract}
We investigate monoidal categories of formal contexts in which states correspond to formal concepts. In particular we examine the category of bonds or Chu correspondences between contexts, which is known to be equivalent to the *-autonomous category of complete sup-lattices. We show that a second monoidal structure exists on both categories, corresponding to the direct product of formal contexts defined by Ganter and Wille, and discuss the use of these categories as compositional models of meaning.
\end{abstract}

\section*{Introduction}

\emph{Formal concept analysis (FCA)} is a highly successful framework for reasoning about collections of objects and their properties, initiated by Wille \cite{wille1992concept}. 
Starting from a system described by a \emph{formal context} of \emph{objects} and the \emph{attributes} these attain, the central feature of FCA is the construction of its hierarchy of \emph{formal concepts}, which form a complete lattice known as the \emph{concept lattice}. FCA has found many successful applications in semantics, including data mining, machine learning, the semantic web, and linguistics \cite{ganter1999formal,ganter2005formal}.

A more recently developed framework is that of \emph{Categorical Distributional Compositional Models of Meaning (DisCo)}, initiated by Coecke, Clark and Sadrzadeh \cite{coecke2010mathematical}. Typically drawing on Lambek's theory of \emph{pregroup grammars} \cite{lambek2008word}, this provides a structured recipe for deriving the meaning of a sentence in terms of the meaning of its individual words when these exist in an autonomous category. More generally, Delpeuch has extended the framework to any monoidal category \cite{delpeuch2014autonomization}. 

Though vector spaces are most commonly used, more semantic categories have recently been explored in the DisCo framework, including the use of density matrices for word meanings \cite{balkir2015distributional}, and convex relational spaces modelling Gardenfors' framework of \emph{conceptual spaces} \cite{gardenfors2004conceptual,bolt2019interacting}.

In this work, we investigate monoidal categories of formal contexts, to serve as new models of meaning in frameworks for compositional semantics such as DisCo.  Conversely, one may hope that category theory may provide new tools for FCA, as argued by Mori \cite{mori2008chu} and Pavlovic \cite{pavlovic2012quantitative,pavlovic2020nucleus}.

Since word meanings in the DisCo formalism are represented by states, we wish to consider categories whose objects are formal contexts and states correspond to their formal concepts. However, beyond this there is freedom in both our choice of morphism and tensor product of formal contexts, and several have been proposed for each \cite{wille1985tensorial,ganter1999formal,krotzsch2005morphisms,mori2008chu,erne2014categories}. 

The morphisms we consider are equivalent to the notion of \emph{bond} between formal contexts introduced by Ganter and Wille \cite{ganter1999formal}, which Mori has studied in detail via the equivalent notion of \emph{Chu correspondence} \cite{mori2008chu}, and which we show also coincide with the morphisms of contexts studied by Moshier \cite{moshier2016relational}. These form our category of interest $\Cxt$. Taking the concept lattice is known to provide an equivalence of categories between $\Cxt$ and the category $\SupLat$ of complete sup-lattices. Since the latter is known to have a {*-autonomous} structure given by the tensor product $\lotimes$ of sup-lattices, this yields the {*-autonomous} monoidal structure $(\Cxt, \lotimes)$ described by Mori in \cite{mori2008chu}. 

 However, one may prefer a tensor structure motivated by formal concepts themselves, rather than lattices. Wille has in fact introduced a notion of \emph{direct product} of formal contexts \cite{wille1985tensorial}. Here we show these provide an alternative symmetric monoidal structure $(\Cxt, \fcotimes)$. Wille has also shown the direct product to correspond to an alternative tensor product $\fcotimes$ of complete lattices. We extend Wille's results to merely sup-complete homomorphisms, to show that this tensor in fact provides a second symmetric monoidal structure $(\SupLat, \fcotimes)$ which makes the concept lattice a monoidal equivalence. 
In summary then, for each of the corresponding tensors $\xtimes \in \{\fcotimes, \lotimes \}$ on $\Cxt$ and $\SupLat$, taking concept lattices provides an equivalence of symmetric monoidal categories $(\Cxt,\xtimes) \simeq (\SupLat, \xtimes)$. Here we briefly discuss the potential use of each monoidal structure in the DisCo framework, which would be desirable to explore in future work.

\paragraph{Outline}
In Section \ref{sec:FCA} we introduce the basics of formal concept analysis. In Section \ref{sec:Cxt} we describe the category $\Cxt$ of formal contexts, giving several equivalent definitions of its morphisms. Section \ref{sec:Cxt-Mon} introduces two symmetric monoidal structures on $\Cxt$. In Section \ref{sec:lattices} we describe the monoidal equivalences $\Cxt \simeq \SupLat$ for two corresponding tensors of sup-lattices. Finally in Section \ref{sec:applications} we describe applications to the DisCo framework. 

\paragraph{Related work}
Our definition of $\Cxt$ essentially comes from the `continuous extent correspondences' of the article \cite{mori2008chu} where the equivalent category of Chu correspondences and its relation with bonds and sup-lattices, and the  *-autonomous structure $\lotimes$,  are studied. Section \ref{sec:Cxt} provides an alternative presentation of this category, and a new equivalence with the category of \cite{moshier2016relational}. Our main new results are the definition of the concept tensors $\fcotimes$ on $\Cxt$ and $\SupLat$ in Sections \ref{sec:Cxt-Mon} and \ref{subsec:concept-suplat}.


\section{Formal Concept Analysis} \label{sec:FCA}

Let us now introduce the basic ingredients of Formal Concept Analysis (FCA). Throughout we follow the presentation of \cite{ganter1999formal}.

\begin{definition}
A \emph{formal context} is a tuple 
\[
\K = (\G, \M, \I)
\]
consisting of a set $\G$ of \emph{objects}, a set $\M$ of \emph{attributes}, and a relation $\I \subseteq \G \times \M$. For each $\g \in \G$ and $\m \in \M$, whenever $\I(\g,\m)$ we instead write $\g \I \m$ and say that \emph{the object $\g$ has the attribute $\m$}. 
\end{definition}

More generally, for any such context $\K$, for any subsets $A \subseteq \G$ and $B \subseteq \M$ we write $A \I B$ whenever $a \I b$ for all $a \in A$ and $b \in B$. We define 
\begin{align*}
A' &:= \{\m \in \M \mid a \I \m \   \forall a \in A \} \subseteq \M \\ 
B' &:= \{ \ \g \in \G   \ \mid g \I b \ \  \  \forall b \in B \} \subseteq \G
\end{align*}
We then have $A \subseteq B' \iff B \subseteq A'$. This means that the mappings $A \mapsto A'$ and $B \mapsto B'$ form a \emph{Galois connection} between the partially ordered sets $\pset(\G)$ and $\pset(\M)$, or in other words an adjunction
\[
\begin{tikzcd}
\pset(\G)^\op \arrow[rr,"(-)'", bend left=15] & \bot & \pset(\M)
\arrow[ll,"(-)'", bend left=15]
\end{tikzcd}
\]
As a result we obtain (idempotent and order-preserving) \emph{closure operators} on $\pset(\G)$ and $\pset(\M)$ given by 
$A \mapsto \cls{A} := A''$ and $B \mapsto \cls{B} := B''$. For each subset $A$ of $\G$ we call $\cls{A}$ the \emph{closure} of $A$, and say $A$ is \emph{closed} when $A = \cls{A}$, and similarly for $B \subseteq \M$. For any $g \in \G$ we define $g' = \{g\}'$ and $\cl{g}=\cl{\{g\}}$, and similarly for $m \in M$. We may now define concepts themselves.

\begin{definition}
A \emph{(formal) concept} of a context $\K$ is a pair $(A, B)$ where $A \subseteq \G$ and $B \subseteq \M$, satisfying $A = B'$ and $B = A'$. We call $A$ the \emph{extent} and $B$ the \emph{intent} of the concept, respectively. 
\end{definition}

By definition, the extent of a concept is precisely the set of all objects which satisfy all the attributes of its intent. Conversely its intent describes precisely the attributes these objects all share. We can define an ordering on concepts by 
\[
(A_1, B_1) \leq (A_2, B_2) :\iff A_1 \leq A_2  \ \ (\iff B_2 \leq B_1)
\]

The key result of FCA is now the following.

\begin{theorem}[\textbf{Basic theorem of formal concept analysis}] \cite{wille1992concept} For any context $\K$, the set of concepts $\clat(\K)$ forms a complete lattice under $\leq$, with 
\begin{align*}
\bigwedge_{i \in I} (A_i, B_i) 
&=
\left(\bigcap_{i \in I} A_i, \cls{\bigcup_{i \in I} B_i}\right)
\\ 
\bigvee_{i \in I} (A_i, B_i) 
&=
\left(\cls{\bigcup_{i \in I} A_i}, \bigcap_{i \in I} B_i\right)
\end{align*}
\end{theorem}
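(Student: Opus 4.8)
The plan is to verify directly that the candidate formulas for $\bigwedge$ and $\bigvee$ indeed define concepts, that they are respectively a lower and upper bound of the family $\{(A_i,B_i)\}_{i\in I}$, and that they are the greatest lower bound and least upper bound; completeness of $\clat(\K)$ then follows since arbitrary meets (or joins) suffice. I would focus on the meet, the join being dual via the symmetry between extents and intents under the Galois connection.

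First I would check that $\left(\bigcap_{i} A_i, \cls{\bigcup_i B_i}\right)$ is a concept. Since each $(A_i,B_i)$ is a concept we have $A_i = B_i'$, so $\bigcap_i A_i = \bigcap_i B_i' = \left(\bigcup_i B_i\right)'$, using the elementary fact that $(-)'$ turns unions into intersections (immediate from the definition of $A\I B$). Because any set of the form $C'$ is closed, $\bigcap_i A_i$ is closed, i.e.\ $\bigcap_i A_i = \left(\bigcap_i A_i\right)''$. Now set $A := \bigcap_i A_i$ and $B := \cls{\bigcup_i B_i} = \left(\bigcup_i B_i\right)''$. Then $A = \left(\bigcup_i B_i\right)' = \left(\left(\bigcup_i B_i\right)''\right)' = B'$, where the middle step uses the standard Galois-connection identity $X''' = X'$. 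And $A' = \left(\bigcup_i B_i\right)'' = B$. Hence $(A,B)$ is a concept. The same computation with the roles of objects and attributes swapped shows $\left(\cls{\bigcup_i A_i}, \bigcap_i B_i\right)$ is a concept.

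Next I would show $(A,B)$ is the infimum. It is a lower bound: $A = \bigcap_i A_i \subseteq A_j$ for each $j$, so $(A,B) \leq (A_j,B_j)$ by definition of the order on concepts. For the greatest-lower-bound property, suppose $(C,D)$ is a concept with $(C,D)\leq (A_j,B_j)$ for all $j$; then $C \subseteq A_j$ for every $j$, hence $C \subseteq \bigcap_i A_i = A$, so $(C,D)\leq (A,B)$. Thus $(A,B) = \bigwedge_i (A_i,B_i)$, and dually $\left(\cls{\bigcup_i A_i},\bigcap_i B_i\right) = \bigvee_i (A_i,B_i)$. Since every family of concepts has a meet, $\clat(\K)$ is a complete lattice (it also has a top $\left(\G, \G'\right)$ and bottom $\left(\M', \M\right)$, arising as the empty meet and join).

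The only mildly delicate point — and the one I would be most careful about — is the bookkeeping around closures: establishing that $\bigcap_i A_i$ is \emph{already closed} (so no outer closure is needed on the extent side, matching the stated formula) and correctly invoking $X''' = X'$ so that the pair $(A,B)$ genuinely satisfies $A = B'$ and $B = A'$ rather than merely $A \subseteq B'$ or similar. Everything else is a routine unwinding of the Galois connection between $\pset(\G)^\op$ and $\pset(\M)$ already set up in the excerpt.
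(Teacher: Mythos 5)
Your proof is correct. The paper states this result as a cited classical theorem of Wille and supplies no proof of its own, so there is no in-paper argument to compare against; your argument is the standard one, and the delicate points you flag are handled properly: $\bigcap_i A_i = \bigl(\bigcup_i B_i\bigr)'$ is automatically closed because sets of the form $C'$ are closed, and the identity $X''' = X'$ gives the exact equalities $A = B'$ and $B = A'$ rather than mere inclusions. The identification of the empty meet and join with the top $(\G,\G')$ and bottom $(\M',\M)$ is also right, which is what makes the reduction of completeness to arbitrary meets legitimate.
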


\begin{example}
A formal context is typically depicted in terms of the \emph{cross-table} of the relation $\I$, and the corresponding Hasse diagram of its concept lattice, such as the following.
\[
\scalebox{0.8}{\begin{tikzpicture}
	\begin{pgfonlayer}{nodelayer}
		\node [style=whitedot] (0) at (4.25, -0.5) {};
		\node [style=whitedot] (1) at (6.25, -0.5) {};
		\node [style=whitedot] (2) at (8.75, -0.5) {};
		\node [style=whitedot] (3) at (10.75, -0.5) {};
		\node [style=whitedot] (4) at (10.75, -2) {};
		\node [style=whitedot] (5) at (8.75, -2) {};
		\node [style=whitedot] (6) at (6.25, -2) {};
		\node [style=whitedot] (7) at (4.25, -2) {};
		\node [style=whitedot] (8) at (7.5, 1) {};
		\node [style=whitedot] (9) at (7.5, -3.5) {};
		\node [style=none] (10) at (11.25, 0) {juvenile};
		\node [style=none] (11) at (8.75, 0) {canine};
		\node [style=none] (12) at (6.25, 0) {feline};
		\node [style=none] (13) at (3.75, 0) {mature};
		\node [style=none] (14) at (3.75, -2.5) {cat};
		\node [style=none] (15) at (6.25, -2.5) {dog};
		\node [style=none] (16) at (8.75, -2.5) {kitten};
		\node [style=none] (17) at (11.25, -2.5) {puppy};
		\node [style=none] (18) at (-7.25, -1.25) {\begin{tabular}{c|ccccc}
      & Mature & Feline & Canine & Juvenile &  \\
      \hline
Cat  & X      & X        &       &      &  \\
Dog & X      &          & X     &      &  \\
Kitten   &        & X        &       & X    &  \\
Puppy   &        &          & X     & X    & 
\end{tabular}};
	\end{pgfonlayer}
	\begin{pgfonlayer}{edgelayer}
		\draw (9) to (7);
		\draw (9) to (6);
		\draw (9) to (5);
		\draw (9) to (4);
		\draw (4) to (3);
		\draw (7) to (0);
		\draw (6) to (2);
		\draw (5) to (1);
		\draw (1) to (8);
		\draw (8) to (2);
		\draw (3) to (8);
		\draw (8) to (0);
		\draw (0) to (6);
		\draw (7) to (1);
		\draw (2) to (4);
		\draw (5) to (3);
	\end{pgfonlayer}
\end{tikzpicture}
}
\]
Here $\G = \{\text{Cat}, \text{Dog}, \text{Kitten}, \text{Puppy}\}$ while $\M = \{\text{Mature}, \text{Feline}, \text{Canine}, \text{Juvenile}\}$.
\end{example}

\begin{example} \label{ex:SXA}
Any set $A$ determines a formal context 
\[
\Sx{A} := (A, A, \neq)
\]
Here every subset $B \subseteq A$ is closed, with $B' = A \setminus B$, so that $\clat(\Sx{A}) \simeq \pset(A)$, the power set of $A$. In particular we define the \emph{trivial context} to be $\Ictx := \Sx{\{\star\}}$.
\end{example}

\begin{example} \label{ex:DM}
For any partially ordered set $P$ we can define a formal context 
\[
F(P) := (P,P,\leq)
\]
The lattice $\clat(F(P))$ is the smallest complete lattice in which $P$ can be order-embedded, known as the \emph{Dedekind-MacNeille completion} of $P$ \cite[p 48]{ganter1999formal}. In particular, when $V$ is a complete lattice we have an isomorphism $V \simeq \clat(F(V))$. Thus every complete lattice arises as a concept lattice.
\end{example}

\begin{example}
Any Hilbert space $\hilbH$ determines a formal context
\[
(\hilbH, \hilbH, \bot)
\]
where $\bot$ is its orthogonality relation. The concept lattice of this context is isomorphic to the orthomodular lattice of subspaces $V \leq \hilbH$, via $V \mapsto (V,V^\bot)$. 
\end{example}

\subsection{Notation}

We will shortly describe morphisms of formal contexts based on relations and so fix some conventions about these. For any sets $A, B$ and any relation $R \colon A \to B$, meaning a subset $R \subseteq A \times B$, we denote the converse relation by $R^\dagger \colon B\ \to A$. For each subset $X \subseteq A$ we set 
\[
R(X) := \{b \in B \mid (\exists x \in X) R(x,b)\}
\]
We will often equate $R$ with its induced mapping $A \to \pset(B)$, and so define $R$ by specifying the subsets $R(a) := R(\{a\}) \subseteq B$ for each $a \in A$. The map $X \mapsto R(X)$ has an adjoint $R^\bullet \colon \pset(B) \to \pset(A)$ given by 
\[
R^\bullet(Y) 
:= 
\{a \in A \mid R(a) \subseteq Y\}
\]
for each $Y \subseteq B$. Finally, we also define a map $R_\ua \colon \pset(A) \to \pset(B)$ by 
\begin{equation}
R_\ua(X) := \{b \in B \mid (\forall x \in X) \ R(x,b)\}
\end{equation}
for each $X \subseteq A$.

\section{A Category of Formal Contexts} \label{sec:Cxt}

 We now introduce morphisms of contexts. In fact we will give four equivalent ways of describing such a morphism, with most of the results of this section being essentially due to Mori who studied these maps in \cite{mori2008chu}. Throughout, let $\K_1 = (\G_1, \M_1, \I)$, $\K_2 = (\G_2, \M_2, \I)$, $\dots$ be contexts. 

\begin{definition}
In the category $\Cxt$, the objects are formal contexts $\K$ and the morphisms $\K_1 \to \K_2$ are relations $R \colon G_1 \to G_2$ which are \emph{closed}, meaning that
\begin{enumerate}
\item $R(g) \subseteq G_2$ is closed, for all $g \in G_1$;
\item 
$R^\bullet \colon \pset(\G_2) \to \pset(\G_1)$ preserves closed sets. 
\end{enumerate}
The composition of $R \colon \K_1 \to \K_2$ and $S \colon \K_2 \to \K_3$ is defined by
\[
(S \circ R)(g) := \cl{(S(R(g))} \quad (\forall g \in \G_1)
\]
The identity morphism on $\K$ is the relation $g \mapsto \cl{g}$ for all $g \in G$. 
\end{definition}

To establish that $\Cxt$ is a valid category, we will use the following.

\begin{lemma} \label{lem:cxt-lem}
A relation $R \colon G_1 \to G_2$ is closed iff $R(g) \subseteq G_2$ is closed for all $g \in G_1$, and for all subsets $A \subseteq G_1$ we have 
\begin{equation} \label{eq:closure-char}
\cls{R(A)} = \cls{R(\cls{A})}
\end{equation}
In fact for all $R \colon \K_1 \to \K_2$ and $S \colon \K_2 \to \K_3$ in $\Cxt$ and $A\subseteq G_1 $ we have 
\begin{equation} \label{eq:comp-rule} 
\cl{(S \circ R)(A)}
=
\cl{S(R(A))}
\end{equation}
\end{lemma}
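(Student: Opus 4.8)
The plan is to reduce everything to the Galois adjunction $R \dashv R^\bullet$ between $(\pset(G_1),\subseteq)$ and $(\pset(G_2),\subseteq)$ recorded when $R^\bullet$ was introduced; concretely this supplies monotonicity of $R$ and $R^\bullet$ together with the unit/counit inequalities $X \subseteq R^\bullet(R(X))$ and $R(R^\bullet(Y)) \subseteq Y$. Since the clause ``$R(g) \subseteq G_2$ is closed for all $g$'' occurs on both sides of the claimed equivalence, it suffices to prove that condition~2 in the definition of $\Cxt$ (namely that $R^\bullet$ preserves closed sets) is equivalent to \eqref{eq:closure-char}.

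For $(\Rightarrow)$, the inclusion $\cls{R(A)} \subseteq \cls{R(\cls{A})}$ is just $A \subseteq \cls{A}$ plus monotonicity. For the reverse, since $\cls{R(A)}$ is already closed it is enough to show $R(\cls{A}) \subseteq \cls{R(A)}$, which by the adjunction is the same as $\cls{A} \subseteq R^\bullet(\cls{R(A)})$; but $R^\bullet(\cls{R(A)})$ is closed by condition~2, and it contains $A$ (via the unit and monotonicity, using $R(A) \subseteq \cls{R(A)}$), hence contains $\cls{A}$. For $(\Leftarrow)$, take $Y \subseteq G_2$ closed and set $A := R^\bullet(Y)$; to see $A$ is closed I would show $\cls{A} \subseteq A$, equivalently $R(\cls{A}) \subseteq Y$ by the adjunction, and indeed $R(\cls{A}) \subseteq \cls{R(\cls{A})} = \cls{R(A)} \subseteq \cls{Y} = Y$, using \eqref{eq:closure-char}, the counit $R(R^\bullet(Y)) \subseteq Y$, and closedness of $Y$.

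For \eqref{eq:comp-rule}, I would expand $(S\circ R)(A) = \bigcup_{g \in A}\cls{S(R(g))}$ and argue by two inclusions, using that any relation, read as a map of power sets, preserves arbitrary unions, so that $S(R(A)) = \bigcup_{g \in A} S(R(g))$. One direction: $S(R(g)) \subseteq \cls{S(R(g))} \subseteq (S\circ R)(A)$ for each $g \in A$, whence $\cls{S(R(A))} \subseteq \cls{(S\circ R)(A)}$. The other: $\cls{S(R(g))} \subseteq \cls{S(R(A))}$ for each $g$ by monotonicity (as $R(g) \subseteq R(A)$), whence $(S\circ R)(A) \subseteq \cls{S(R(A))}$ and so $\cls{(S\circ R)(A)} \subseteq \cls{S(R(A))}$. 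This part in fact uses nothing about $R$ and $S$ beyond their being relations.

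Everything here is routine; the only place needing care is the reverse inclusion in $(\Rightarrow)$, where one must apply the adjunction in the correct direction and exploit that $R^\bullet$ preserves the closure of the particular set $\cls{R(A)}$. I do not expect a genuine obstacle.
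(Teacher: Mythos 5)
Your proof is correct and follows essentially the same route as the paper: both directions of the equivalence rest on the adjunction $R(X) \subseteq Y \iff X \subseteq R^\bullet(Y)$ together with the fact that a closed set containing $A$ contains $\cls{A}$, and your two-inclusion argument for \eqref{eq:comp-rule} is just an unpacking of the identity $\cls{\bigcup_i A_i} = \cls{\bigcup_i \cls{A_i}}$ that the paper invokes. The only (harmless) presentational difference is that the paper threads the argument through the attribute side, testing against sets of the form $B'$ and using $\cls{X}=X''$, whereas you work directly with arbitrary closed subsets of $G_2$; your version is marginally more self-contained since it never mentions $M_2$.
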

\begin{proof}

For the first point, note that for any closed relation $R \colon G_1 \to \G_2$ and $A \subseteq G_1$, $B \subseteq M_2$ we have 
\begin{align*}
R(A) \I B 
&\iff R(A) \subseteq B' \\ 
&\iff A \subseteq R^\bullet(B') \\ 
&\iff \cls{A} \subseteq R^\bullet(B') 
\iff R(\cls{A}) \I B
\end{align*}
using that $R^\bullet(B')$ is closed since $B'$ is. It follows that \eqref{eq:closure-char} holds. Conversely if this is the case then for all $A \subseteq G_1$ and $B \subseteq M_2$ we have 
\begin{align*}
A \subseteq R^\bullet(B')
&\iff R(A) \subseteq B' \\ 
&\iff \cls{R(A)} \subseteq B' \\ 
&\iff \cls{R(\cls{A})} \subseteq B' 
\iff \cls{A} \subseteq R^\bullet(B')
\end{align*}
using that $B'$ is closed in the second step, and so $R^\bullet(B')$ is closed as required.

For \eqref{eq:comp-rule} note that by definition
\begin{align*}
\cls{(S \circ R)(A)} = \cls{\bigcup_{a \in A} \cls{S(R(a))}}
= \cls{\bigcup_{a \in A} S(R(a))} = \cls{S(R(A))}
\end{align*}
using that in any context closure operators satisfy $\cls{\bigcup_{i \in I} A_i} = \cls{\bigcup_{i \in I} \cls{A_i}}$.
\end{proof}

\begin{corollary} $\Cxt$ is a well-defined category.
\end{corollary}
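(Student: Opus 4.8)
The plan is to check in turn that the identity relations and the composition operation are well defined inside $\Cxt$, and then verify associativity and the two unit laws, in each case reducing everything to Lemma~\ref{lem:cxt-lem} together with the elementary fact that closure operators in any context satisfy $\cls{\bigcup_i A_i} = \cls{\bigcup_i \cls{A_i}}$.

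First I would confirm that the relation $g \mapsto \cls{g}$ on $\K$ is closed: each value $\cls{g}$ is closed by idempotency of $(-)''$, and for any $A \subseteq \G$ we have $\cls{\bigcup_{a \in A} \cls{a}} = \cls{A} = \cls{\cls{A}}$, so the characterisation \eqref{eq:closure-char} in Lemma~\ref{lem:cxt-lem} is satisfied. Next, for closed $R \colon \K_1 \to \K_2$ and $S \colon \K_2 \to \K_3$, I would show the prescribed composite $(S \circ R)(g) := \cls{S(R(g))}$ is again closed. Each of its values is closed by idempotency. For the second condition, \eqref{eq:comp-rule} gives $\cls{(S \circ R)(A)} = \cls{S(R(A))}$; applying closedness of $S$ in the form $\cls{S(X)} = \cls{S(\cls{X})}$ (a consequence of \eqref{eq:closure-char}) this equals $\cls{S(\cls{R(A)})}$, then closedness of $R$ replaces $\cls{R(A)}$ by $\cls{R(\cls{A})}$, and closedness of $S$ once more together with \eqref{eq:comp-rule} yields $\cls{S(R(\cls{A}))} = \cls{(S \circ R)(\cls{A})}$. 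Hence \eqref{eq:closure-char} holds for $S \circ R$.

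For associativity, given also $T \colon \K_3 \to \K_4$, I would evaluate both composites on a generator $g$: using \eqref{eq:comp-rule} for the pair $(S,T)$ we get $\bigl((T \circ S) \circ R\bigr)(g) = \cls{(T \circ S)(R(g))} = \cls{T(S(R(g)))}$, while $\bigl(T \circ (S \circ R)\bigr)(g) = \cls{T(\cls{S(R(g))})} = \cls{T(S(R(g)))}$ by closedness of $T$, so the two relations agree. The unit laws are similar: $(\id{\K_2} \circ R)(g) = \cls{\bigcup_{h \in R(g)} \cls{h}} = \cls{R(g)} = R(g)$ since $R(g)$ is closed, and $(R \circ \id{\K_1})(g) = \cls{R(\cls{g})} = \cls{R(g)} = R(g)$, using \eqref{eq:closure-char} for $R$ on the singleton $\{g\}$ and closedness of $R(g)$ again.

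None of these steps is genuinely hard once Lemma~\ref{lem:cxt-lem} is available; the only point requiring care — and the place where the lemma really earns its keep — is the bookkeeping gap between the fact that composition is specified only on images of singletons while the notion of closed relation quantifies over all subsets. Equations \eqref{eq:closure-char} and \eqref{eq:comp-rule} are precisely what bridge this gap, so after invoking them each verification above becomes a short rewriting.
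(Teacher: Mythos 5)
Your proposal is correct and follows essentially the same route as the paper: the key chain of equalities showing the composite is a closed relation is identical to the one in the paper's proof, and both arguments reduce everything to Lemma~\ref{lem:cxt-lem}. You are in fact slightly more thorough, since you also verify explicitly that the identity relation is closed and that composition is associative, two routine checks the paper leaves implicit.
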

\begin{proof}
That composite of $R \colon \K_1 \to \K_2$ and $S \colon \K_2 \to \K_3$ is indeed a closed relation by Lemma \ref{lem:cxt-lem} since for all $A \subseteq G_1$ we have 
\[
\cls{(S \circ R)(A)} = \cls{S(R(A)} = \cls{S(\cls{R(A)})} = \cls{S(\cls{R(\cls{A})})}
= \cls{S(R(\cls{A}))} = \cls{(S \circ R)(\cls{A})}
\]
We further have $R \circ \id{} = R$ since $R(g) = \cls{R(g)} = \cls{R(\cls{g})}$ for all $g \in G$ and $\id{} \circ R = R$ again follows from Lemma \ref{lem:cxt-lem}. 
\end{proof}

Though our definition of morphism refers only to objects, and not attributes, 
we see shortly that each closed relation $R \colon G_1 \to G_2$ is equivalently described by another $R^* \colon M_2 \to M_1$ in the opposite direction, related to $R$ in the following manner studied by Mori.

\begin{definition}\cite{mori2008chu}
A \emph{Chu correspondence} $(R, S) \colon \K_1 \to \K_2$ is a pair of relations $R \colon \G_1 \to \G_2$ and $S \colon \M_2 \to M_1$ for which each of the sets $R(\g_1) \subseteq \G_2$ and $S(\m_2) \subseteq \M_1$ are closed and we have 
\[
R(\g_1) \I \m_2 \iff \g_1 \I S(\m_2)
\]
for all $\g_1 \in \G_1$, $\m_1 \in M_2$. 
\end{definition}

Another notion of morphism  of contexts was put forward by Ganter and Wille directly in the context of formal concept analysis \cite{ganter1999formal}.

\begin{definition}
A \emph{bond} is a relation $B \colon \G_1 \to \M_2$ for which $B(\g_1) \subseteq \M_2$ and $B^\dagger(\m_2) \subseteq \G_1$ are closed, for all $\g_1 \in \G_1$ and $\m_2 \in \M_2$. 
\end{definition}

Thus a bond is simply a relation from $\G_1$ to $\M_2$ whose rows and columns are closed. We can now show that all of these notions of morphism are equivalent. 

\begin{proposition} \label{prop:morphisms-the-same}
For any contexts $\K_1, \K_2$ there are bijections between:
\begin{enumerate}
\item  \label{enum:ex-rel}
Closed relations $R \colon G_1 \to G_2$;
\item \label{enum:in-rel}
Closed relations $R^* \colon M_2 \to M_1$; 
\item \label{enum:chu}
Chu correspondences $(R, R^*) \colon \K_1 \to \K_2$;
\item \label{enum:bond}
Bonds $B \colon G_1 \to M_2$;
\end{enumerate}
given by 
\begin{equation} \label{eq:relate-L-R-B}
R(g_1) = {R^*}^\bullet(g_1')' = B(g_1)' \qquad
R^*(m_2) = R^\bullet(m_2')'  \qquad B(g_1) = R(g_1)'\\ 
\end{equation}
for each $g_1 \in G_1$ and $m_2 \in M_2$.
\end{proposition}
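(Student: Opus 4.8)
The plan is to establish the bijections by exhibiting explicit inverse maps and checking they are mutually inverse, relying on the Galois connection machinery from Section~\ref{sec:FCA} together with Lemma~\ref{lem:cxt-lem}. The cleanest route is to prove the chain of bijections \ref{enum:ex-rel}~$\leftrightarrow$~\ref{enum:bond}~$\leftrightarrow$~\ref{enum:in-rel}, and then observe that \ref{enum:chu} is just a repackaging of having compatible data on both sides. First I would treat the correspondence \ref{enum:ex-rel}~$\leftrightarrow$~\ref{enum:bond}: given a closed relation $R\colon G_1\to G_2$, define $B(g_1):=R(g_1)'\subseteq M_2$. Since $R(g_1)$ is closed, $B(g_1)=R(g_1)'$ has $B(g_1)'=R(g_1)''=R(g_1)$, so $B$ determines $R$ and conversely; the rows $B(g_1)$ are closed because every set of the form $X'$ is closed. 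The real content is that $B^\dagger(m_2)\subseteq G_1$ is closed for each $m_2$, and conversely that if all columns of $B$ are closed then the recovered $R$ satisfies condition~2 of being a closed relation (i.e.\ $R^\bullet$ preserves closed sets); this is where Lemma~\ref{lem:cxt-lem}, in the form \eqref{eq:closure-char}, does the work, translating ``columns closed'' into the closure-compatibility of $R$.

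**The attribute-side map.**
Next I would handle \ref{enum:bond}~$\leftrightarrow$~\ref{enum:in-rel}, which is formally dual to the previous step under swapping the roles of objects and attributes: from a bond $B\colon G_1\to M_2$ one sets $R^*(m_2):=B^\dagger(m_2)'\subseteq M_1$, and since $B^\dagger(m_2)$ is closed this is invertible exactly as before, with the closedness of $R^*$ as a relation $\K_1^{\mathrm{op}}\to$ (appropriately read) again coming from \eqref{eq:closure-char} applied on the attribute side. Composing the two bijections gives \ref{enum:ex-rel}~$\leftrightarrow$~\ref{enum:in-rel} and, unwinding, the stated formulas: tracing $R\mapsto B\mapsto R^*$ yields $R^*(m_2)=R^\bullet(m_2')'$, and similarly the ``reverse'' composite gives $R(g_1)={R^*}^{\bullet}(g_1')'$, while $B(g_1)=R(g_1)'$ is the first map by definition. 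I would verify the two displayed identities in \eqref{eq:relate-L-R-B} by a short computation with the adjunction $A\subseteq R^\bullet(Y)\iff R(A)\subseteq Y$ and the identity $X''{}'=X'$ for closure operators, rather than by re-deriving the bijection from scratch.

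**Chu correspondences and the main obstacle.**
For \ref{enum:chu} I would show that giving a Chu correspondence $(R,S)$ is the same as giving the pair consisting of a closed $R\colon G_1\to G_2$ and a closed $S\colon M_2\to M_1$ that ``match up'', i.e.\ satisfy $R(g_1)\I m_2\iff g_1\I S(m_2)$; the claim is that this matching condition forces $S=R^*$ with $R^*$ as above, so that the Chu correspondences are in bijection with closed relations $R$ via $(R,R^*)$. Concretely, $R(g_1)\I m_2 \iff R(g_1)\subseteq m_2' \iff m_2\in R(g_1)'$ and $g_1\I S(m_2)\iff g_1\in S(m_2)'$, so the biconditional says $R(g_1)'=S(m_2)'$-membership agrees pointwise, which (using closedness of the relevant rows and the fact that $S(m_2)$ is closed) pins down $S(m_2)=R^*(m_2)$. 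I expect the main obstacle to be the bookkeeping in this last step: one must be careful about which sets are a priori closed and which closedness has to be deduced, and about the direction of each Galois connection, since the argument repeatedly uses ``$X$ closed $\Rightarrow X=X''$'' and ``$R^\bullet$ of a closed set is closed'' in opposite variances. Everything else is a routine, if somewhat lengthy, unwinding of definitions once \eqref{eq:closure-char} is in hand.
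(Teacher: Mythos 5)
Your plan is correct: all the identities you need ($B^\dagger(m_2)=R^\bullet(m_2')$, closedness of sets of the form $X'$, the adjunction $A\subseteq R^\bullet(Y)\iff R(A)\subseteq Y$, and the characterisation \eqref{eq:closure-char}) are exactly the ones that make the argument go through, and the step you flag as the main obstacle is indeed where the care is needed. The route differs from the paper's in its decomposition: the paper takes the Chu correspondence as the pivot, first proving \ref{enum:ex-rel}~$\iff$~\ref{enum:in-rel}~$\iff$~\ref{enum:chu} directly (showing $(R,R^*)$ satisfies the Chu condition, and conversely that the Chu condition forces $R^\bullet(B')=S^\bullet(B)'$ and hence closedness of $R$ and $S$), and only then passing to bonds; you instead take the bond as the pivot, prove \ref{enum:ex-rel}~$\leftrightarrow$~\ref{enum:bond}, obtain \ref{enum:bond}~$\leftrightarrow$~\ref{enum:in-rel} by the formal self-duality swapping objects and attributes, and recover \ref{enum:chu} as a corollary. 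Your ordering buys some economy (the attribute-side verification comes for free by duality, and the symmetric definition of a bond makes the invertibility of $X\mapsto X'$ on closed sets transparent), at the cost of having to re-derive, when you finally treat Chu correspondences, that the matching condition $R(g_1)\I m_2\iff g_1\I S(m_2)$ by itself already implies condition~2 of closedness for $R$ and $S$ --- which your route does supply, since the matching condition identifies the columns $B^\dagger(m_2)$ with the closed sets $S(m_2)'$, whence $B$ is a bond and \ref{enum:bond}~$\leftrightarrow$~\ref{enum:ex-rel} applies. Make sure that step is written out explicitly, since a Chu correspondence only assumes closed rows a priori; otherwise the argument is complete.
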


\begin{proof}
\ref{enum:ex-rel} $\iff$ \ref{enum:in-rel} $\iff$ \ref{enum:chu}.
For any closed relation $R \colon \G_1 \to \G_2$ define $R^* \colon \M_2 \to \M_1$ as above. Since each set $R^\bullet(m_2')$ is closed we have $R^\bullet(m_2') = R^\bullet(m_2')'' = R^*(m_2)'$ and so 
\begin{align*}
R(g_1) \I m_2 &\iff R(g_1) \subseteq m_2' \\ 
&\iff g_1 \in R^\bullet(m_2') = R^*(m_2)'
\\ &\iff g_1 \I R^*(m_2)
\end{align*}
and so $(R, R^*)$ is a Chu correspondence. 

Conversely, let $(R, S)$ be any Chu correspondence. Then it is easy to see that  $R^\bullet(B') = S^\bullet(B)'$ for all $B \subseteq M_2$. Then if $A \subseteq \G_2$ is closed we have 
\[
R^\bullet(A) = R^\bullet(A'') = S(A')'
\]
and so $R^\bullet(A)$ is closed. Hence $R$ is a closed relation (and similarly so is $S$). We now verify that $S = R^*$. But by definition $g_1 \I S(m_2)$ iff $R(g_1) \I m_2$ iff $R(g_1) \subseteq m_2'$ iff $g_1 \in R^\bullet(m_2')$. Similarly one may see that $R(g_1) = S^\bullet(g_1')'$ for all $g_1 \in G_1$, i.e. $R=R^**$.

\ref{enum:chu} $\iff$ \ref{enum:bond}
Let $(R, S)$ be a Chu correspondence and define $B \subseteq \G_1 \times M_2$ by $B(g_1) = R(g_1)'$. By construction each $B(\g_1)$ is closed. Now by definition $g_1 \in B^\dagger(\m_2)$ whenever $R(g_1) \I m_2$. But this holds iff $g_1 \I S(m_2)$ iff $g_1 \in S(m_2)'$. Hence $B^\dagger(m_2) = S(m_2)'$, making it closed, so $B$ is a bond.

Conversely, suppose $B$ is a bond and define $R, S$ as above. By construction $R(\g_1)$ and $S(\m_2)$ are closed and we have 
\begin{align*}
R(g_1) \I m_2 &\iff B(g_1)' \I m_2 
\\ &\iff m_2 \in \cl{B(g_1)} = B(g_1)
\\ &\iff g_1 \in B^\dagger(m_2) = \cl{B^\dagger(m_2)} 
\\ &\iff g_1 \I B^\dagger(m_2)' = S(m_2) 
\end{align*}
making $(R, S)$ a Chu correspondence. Since $R(g_1) = R(g_1)''$, $S(\m_2)=S(m_2)''$ and $B(\g_1)=B(g_1)''$ for any Chu correspondence $(R, S)$ or bond $B$, the assignments $(R, S) \leftrightarrow B$ are inverse. 
\end{proof}

Each of the above correspondences may be made functorial. Firstly, for any context $\K = (G, M, \I)$ define the \emph{dual context}
\[
\K^* := (M, G, \I^{\dagger})
\]
by swapping objects and attributes. Let us say that a category $\catC$ is \emph{self-dual} when it comes with an equivalence $(-)^* \colon \catC^\op \simeq \catC$ satisfying $A^{**} = A$ for all objects $A$ and $f^{**} = f$ for all morphisms $f$. 

\begin{lemma}
The assignment $\K \mapsto \K^*$ and $R \mapsto R^*$ defines a self-duality $(-)^* \colon \Cxt^\op \simeq \Cxt$. 
\end{lemma}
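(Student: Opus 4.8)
The claim is that $\K \mapsto \K^*$ and $R \mapsto R^*$ assemble into a self-duality, so I need to verify: (i) $(-)^*$ is a functor $\Cxt^\op \to \Cxt$, i.e. it sends closed relations to closed relations, respects composition (contravariantly), and sends identities to identities; (ii) it is an equivalence; (iii) it is strictly involutive on objects and morphisms. Point (iii) is essentially already in hand: $\K^{**} = (G,M,\I) = \K$ is immediate from the definition of the dual context, and $R^{**} = R$ was proved as part of Proposition~\ref{prop:morphisms-the-same} (the equality $R = R^{**}$ at the end of the \ref{enum:ex-rel}$\iff$\ref{enum:in-rel} argument). Given (iii), once $(-)^*$ is shown to be a functor it is automatically an equivalence, being its own inverse (an involution on objects and morphisms is trivially fully faithful and essentially surjective), so the real content is (i).

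For well-definedness on morphisms, I would invoke Proposition~\ref{prop:morphisms-the-same} directly: a closed relation $R \colon G_1 \to G_2$ corresponds to a closed relation $R^* \colon M_2 \to M_1$, which by the very same proposition applied to the contexts $\K_2^*, \K_1^*$ is exactly a morphism $\K_2^* \to \K_1^*$ in $\Cxt$ (noting that closure in $\K^*$ is computed via $\I^\dagger$, i.e. the $(-)'$ operators swap roles). So $R^*$ is a morphism of $\Cxt$ without further work. For identities, the identity on $\K$ is $g \mapsto \cl g = g''$; I need to check that its image under $(-)^*$ is the identity on $\K^*$, i.e. $m \mapsto m''$ (closure now in $\K^*$). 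Unwinding $R^*(m) = R^\bullet(m')'$ with $R(g) = \cl g$ should give exactly this; this is a short direct computation using the Galois connection.

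The main step — and the one I expect to be the only real obstacle — is functoriality with respect to composition: that $(S \circ R)^* = R^* \circ S^*$ for $R \colon \K_1 \to \K_2$ and $S \colon \K_2 \to \K_3$. The cleanest route is to use the Chu-correspondence picture: under the bijection of Proposition~\ref{prop:morphisms-the-same}, $R$ corresponds to the pair $(R, R^*)$ and $S$ to $(S, S^*)$, and one checks that the composite Chu correspondence $(S\circ R, \,?)$ has second component $R^* \circ S^*$ — equivalently, that the defining biconditional $R(S(\cdot))$-side matches the $R^*\circ S^*$-side. Concretely I would show $(S\circ R)^*(m_3) = (R^* \circ S^*)(m_3) = \cl{R^*(S^*(m_3))}$ for all $m_3 \in M_3$, using the characterization $\cl{(S\circ R)(A)} = \cl{S(R(A))}$ from Lemma~\ref{lem:cxt-lem} together with the adjunction identities relating $R(-)$, $R^\bullet$, and $(-)'$ (in particular $R(g_1) \I m_2 \iff g_1 \I R^*(m_2)$, which is the Chu condition). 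The bookkeeping with three layers of $(-)'$ and the interaction of $(-)^\bullet$ with closures is the fiddly part; packaging everything through the Chu-correspondence reformulation, where composition is "obvious" on one side, is what makes it manageable. I would state the composition compatibility as the bulk of the proof and leave the identity and involution checks as one-line remarks.
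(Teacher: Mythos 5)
Your proposal is correct and follows essentially the same route as the paper: both rest on Proposition~\ref{prop:morphisms-the-same}, characterising $R^*$ as the unique relation making $(R,R^*)$ a Chu correspondence, deducing $R^{**}=R$ from the fact that $(R^*,R)$ is again a Chu correspondence, and obtaining contravariant functoriality by checking that composites of Chu correspondences are Chu correspondences (so the second component of $(S\circ R, -)$ must be $R^*\circ S^*$ by uniqueness). Your additional remarks — that closedness in $\K^*$ coincides with closedness of attribute sets in $\K$, and that an involutive functor is automatically an equivalence — are correct and only make explicit what the paper leaves implicit.
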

\begin{proof}
 For any $R \colon \K_1 \to \K_2$, Proposition \ref{prop:morphisms-the-same} tells us that $R^*$ is the unique morphism for which $(R,R^*)$ forms a Chu correspondence. It follows easily that $\id{\K}^* = \id{\K}$ and that $R^{**} = R^*$ since $(R^*,R)$ is a Chu correspondence. Moreover if $(R_1,S_1)$ and $(R_2,S_2)$ are Chu correspondences one may see verify that $(R_2 \circ R_1, S_1 \circ S_2)$ is also, and so $(-)^*$ preserves composition.
 \end{proof}

Proposition \ref{prop:morphisms-the-same} also shows that $\Cxt$ is isomorphic to the category $\ChuCors$ of Chu correspondences studied in the article \cite{mori2008chu}, where the latter is also shown to be isomorphic to the category $\Bonds$ in which morphisms $\K_1 \to \K_2$ are bonds $B \colon G_1 \to M_2$, under the composition rule
\[
(B_2 \circ B_1)(g) := (B_2)_\ua(B_1(g)') \quad (\forall g \in \G_1)
\]
with the identity bonds being the relations $\I$. We verify this result ourselves.

\begin{lemma}
There is an isomorphism of categories
$\Cxt \simeq \Bonds$.
\end{lemma}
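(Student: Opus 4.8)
The plan is to define the evident functor $F \colon \Cxt \to \Bonds$ which is the identity on objects and which sends a closed relation $R \colon G_1 \to G_2$ to the bond $B_R \colon G_1 \to M_2$ with $B_R(g_1) = R(g_1)'$, exactly as in Proposition~\ref{prop:morphisms-the-same}. By the part \ref{enum:ex-rel}~$\iff$~\ref{enum:bond} of that proposition, $F$ is already a bijection on each hom-set, with inverse sending a bond $B$ to the closed relation $g_1 \mapsto B(g_1)'$. So the only thing left to verify is that $F$ is functorial, i.e. that it preserves identities and composition.

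For identities: the identity morphism on $\K$ in $\Cxt$ is the relation $g \mapsto \cl{g}$, and $F$ sends it to the bond $g \mapsto \cl{g}' = g'$, using that $A''' = A'$ for any $A$; this is precisely the relation $\I$ viewed as a bond $G \to M$, which is the identity of $\Bonds$. For composition, take $R \colon \K_1 \to \K_2$ and $S \colon \K_2 \to \K_3$ in $\Cxt$. On one side, $F(S \circ R)(g_1) = \big(\cl{S(R(g_1))}\big)' = S(R(g_1))'$, again by $A''' = A'$. On the other side, unfolding the composition rule of $\Bonds$ and using that $R(g_1)$ is closed,
\[
(B_S \circ B_R)(g_1) = (B_S)_\ua\big(B_R(g_1)'\big) = (B_S)_\ua\big(R(g_1)''\big) = (B_S)_\ua\big(R(g_1)\big).
\]
Expanding the definitions of $(-)_\ua$ and of $B_S$ then gives
\[
(B_S)_\ua\big(R(g_1)\big) = \{ m \in M_3 \mid \forall g_2 \in R(g_1),\ S(g_2) \I m \} = \{ m \mid S(R(g_1)) \I m \} = S(R(g_1))',
\]
which agrees with $F(S \circ R)(g_1)$. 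Hence $F$ preserves composition, and together with its hom-set inverse it is an isomorphism of categories.

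The only obstacle here is bookkeeping: one must keep straight the three directions of relations involved — $R$ and $S$ on objects and the associated bonds $B_R$, $B_S$ on the mixed object/attribute sets — and correctly unwind the operator $(-)_\ua$ against the Galois operation $(-)'$. All the identities needed ($A''' = A'$, closedness of each $R(g_1)$, and the behaviour of $(B_S)_\ua$ on a union $S(R(g_1)) = \bigcup_{g_2 \in R(g_1)} S(g_2)$) are already available from Section~\ref{sec:FCA} and Lemma~\ref{lem:cxt-lem}, so no new ingredient is required.
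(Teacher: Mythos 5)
Your proof is correct and follows essentially the same route as the paper: both use the bijection from Proposition \ref{prop:morphisms-the-same}, reduce $(B_S \circ B_R)(g_1)$ to $(B_S)_\ua(R(g_1))$ via closedness of $R(g_1)$, and identify this with $S(R(g_1))' = F(S\circ R)(g_1)$ using the identity $R(A)' = B_\ua(A)$ (the paper's equation \eqref{eq:up-useful}, which you re-derive inline). Your explicit check that identities are preserved is a small addition the paper leaves implicit.
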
 
\begin{proof}
We will use the correspondence of Proposition \ref{prop:morphisms-the-same}.

We first establish the following fact. For any closed relation $R$ the bond $B$ of Proposition \ref{prop:morphisms-the-same} satisfies 
\begin{equation} \label{eq:up-useful}
R(A)' = \bigcap_{a \in A} R(a)' = \bigcap_{a \in A} B(a) = B_\ua(A) 
\end{equation}
for all $A \subseteq \G_1$. 
Now for any morphisms $R_1 \colon \K_1 \to \K_2$ and $R_2 \colon \K_2 \to \K_3$ with corresponding bonds $B_1, B_2$ we have 
\begin{align*}
(B_2 \circ B_1)(g) 
:=
(B_2)_\ua(B_1(g)') 
&=
(B_2)_\ua(R_1(g)'')
=
(B_2)_\ua(R_1(g))=
R_2(R_1(g))'
\\
&=
(R_2(R_1(g)))'''
=
(R_2 \circ R_1)(g)'
\end{align*}
as required.
\end{proof}

We note also that Moshier has described a seemingly alternative relational category of formal contexts \cite{moshier2016relational}, further studied by Jipsen \cite{jipsen2012categories}. In fact this category coincides with our own. 

\begin{lemma} \label{lem:Moshier}
The category $\Cxt$ is identical to that of the same name in \cite{moshier2016relational,jipsen2012categories}. In particular their `compatible relations' are precisely bonds. 
\end{lemma}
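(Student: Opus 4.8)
The plan is to unpack the definition of the category of \cite{moshier2016relational,jipsen2012categories} and match it, on the nose, with $\Cxt$, using the isomorphism $\Cxt \simeq \Bonds$ of the preceding lemma as the bridge. Recall that in those works the objects are again arbitrary formal contexts, and a morphism $\K_1 \to \K_2$ is a \emph{compatible relation}: a relation $R \subseteq \G_1 \times \M_2$ such that, for every $\g_1 \in \G_1$, the row $R(\g_1) \subseteq \M_2$ is an intent of $\K_2$, and for every $\m_2 \in \M_2$, the column $R^\dagger(\m_2) \subseteq \G_1$ is an extent of $\K_1$. First I would observe that an ``intent of $\K_2$'' is by definition a closed subset of $\M_2$ and an ``extent of $\K_1$'' a closed subset of $\G_1$ — the closure operators in play being exactly those of $\K_2$ and $\K_1$ — so that the conditions defining a compatible relation are word for word those defining a bond $B \colon \G_1 \to \M_2$. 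Hence the two presentations of $\Cxt$ have the same objects and, for every ordered pair of objects, the same hom-sets.

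It then remains to check that the categorical operations agree. The identity on $\K$ in \cite{moshier2016relational} is the relation $\I \subseteq \G \times \M$ itself, which is precisely the identity bond recorded in the lemma $\Cxt \simeq \Bonds$. For composition, I would write out their composition rule for $R \colon \K_1 \to \K_2$ and $S \colon \K_2 \to \K_3$ — which routes through the intermediate context $\K_2$ — and verify, by a short manipulation of the operators $(-)'$, $(-)_\ua$ and $(-)^\bullet$, that it returns at each $\g \in \G_1$ exactly $(S \circ R)(\g) = S_\ua(R(\g)')$, the composition of $\Bonds$. Combined with the previous paragraph this yields an identity-on-objects isomorphism with $\Bonds$; composing with the bijection of Proposition \ref{prop:morphisms-the-same} that carries a bond $B \colon \G_1 \to \M_2$ to the closed relation $R \colon \G_1 \to \G_2$, and with the isomorphism $\Cxt \simeq \Bonds$ established above, then makes the identification with our $\Cxt$ explicit.

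I expect the main obstacle to be essentially bookkeeping: reconciling presentational differences. In \cite{moshier2016relational,jipsen2012categories}, ``compatibility'' and the composition law are typically phrased in terms of the induced maps between the lattices of closed sets (and Raney-type relations) rather than in the relational ``rows and columns closed'' form used here, so the work is to translate those formulations into the language of Proposition \ref{prop:morphisms-the-same} and confirm that they coincide \emph{exactly}, not merely up to equivalence, including getting the direction and dualization conventions to line up. There is also a small point to verify: that \cite{moshier2016relational,jipsen2012categories} impose no side conditions on objects (such as being reduced or clarified); if they did, ``identical'' would have to be weakened to ``isomorphic'', but no such restriction is present, so the identification goes through verbatim.
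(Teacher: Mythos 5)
Your overall route is the same as the paper's: identify the `compatible relations' of \cite{moshier2016relational,jipsen2012categories} with bonds, check that identities and composition agree, and then invoke the isomorphism $\Cxt \simeq \Bonds$. The gap lies in the first and central step. You assert that a compatible relation is \emph{defined} by the condition that each row $R(\g_1)$ is an intent and each column $R^\dagger(\m_2)$ is an extent, so that the identification with bonds holds ``word for word''. That is not how those works state compatibility: there a morphism is a relation $B \subseteq \G_1 \times \M_2$ whose converse $C = B^\dagger$ satisfies
\[
\cls{C_\ua(Y)} = C_\ua(Y) = C_\ua(\cls{Y})
\]
for \emph{all} subsets $Y \subseteq \M_2$, not just singletons. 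The content of the lemma --- flagged in its second sentence --- is precisely that this subset-level condition is equivalent to the rows-and-columns-closed condition defining a bond. You acknowledge in your final paragraph that a translation between the two formulations is needed, but you defer it as ``bookkeeping'' without carrying it out, which leaves the key step unproved. The forward direction uses the adjunction-style fact $X \subseteq C_\ua(Y) \iff Y \subseteq B_\ua(X)$ to deduce from $C_\ua(Y) = C_\ua(\cls{Y})$ that every $B_\ua(X)$, hence every row $B(g) = B_\ua(\{g\})$, is closed; the converse uses the identity $C_\ua(Y) = R(Y)'$ (with $R$ the intent relation of the bond) together with Lemma \ref{lem:cxt-lem} to obtain the condition for arbitrary $Y$ rather than singletons. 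Neither step is long, but neither is a definitional coincidence.

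The composition comparison is likewise only promised: their rule $(B_2 \bullet B_1)^\dagger(m) = (C_1)_\ua\bigl((C_2)_\ua(\{m\})'\bigr)$ has to be reduced to $(B_2 \circ B_1)^\dagger(m)$, again via $C_\ua(Y) = R(Y)'$, and since this is the one place the identification could in principle fail it should be computed rather than asserted. Your closing remark that the objects carry no side conditions (reduced or clarified contexts) is a worthwhile check and is consistent with the paper.
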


\begin{proof} 
Appendix \ref{sec:appendix}. 
 \end{proof}

\section{Monoidal Structures on Formal Contexts} \label{sec:Cxt-Mon}

We will now define two distinct monoidal structures on $\Cxt$, each sharing the same tensor unit $\Icxt$, but with different tensor operations.

The first tensor has been described in the context of Chu correspondences \cite{mori2008chu}, and is motivated by its close connection to the tensor product of sup-lattices, as we see in Section \ref{sec:lattices}.

\begin{definition}
For any contexts $\K_1, \K_2$ we define their \emph{lattice tensor} as
\[
\K_1 \lotimes \K_2 := 
(G_1 \times G_2, \Cxt(\K_1, \K_2^*), \I)
\]
where for any morphism $\K_1 \to \K_2^*$ corresponding to a relation $R \colon \G_1 \to \G_2$ we set $(g_1,g_2) \I R$ whenever $R(g_1) \I g_2$. Equivalently, we have that $(g_1, g_2) \in B$ where $B$ is the bond induced by $R$. 
\end{definition}

Another tensor of contexts has been introduced by Wille directly for FCA. 

\begin{definition}
For any contexts $\K_1, \K_2$ we define their \emph{concept tensor} as 
\[
\K_1 \fcotimes \K_2 
:= 
(\G_1 \times \G_2,
\M_1 \times \M_2,
\triangledown)
\]
where 
\[
(\g_1, \g_2) \triangledown (\m_2, \m_2) \iff \g_1 \I \m_1 \text{ or }\g_2 \I \m_2 
\]
In \cite{wille1985tensorial} this is called the \emph{direct product} of contexts, and denoted $\K_1 \times \K_2$.  
\end{definition}

Since both $\lotimes$ and $\fcotimes$ are defined in the same way on the extent parts of a context, we can in fact describe their bifunctors and structure isomorphisms in the same way. We do so explicitly for $\fcotimes$. For any morphisms $R_1 \colon \K_1 \to \K_3$ and $R_2 \colon \K_2 \to \K_4$ we define $R_1 \otimes R_2 \colon \K_1 \otimes \K_2 \to \K_3 \otimes \K_4$ by
\begin{equation} \label{eq:fc-tensor-Chu}
(R_1 \fcotimes R_2) (g_1, g_2) = \cls{R_1(g_1) \times R_2(g_2)} \subseteq \G_3 \times \G_4 
\end{equation}
for $(g_1,g_2) \in \G_1 \times \G_2$. We define the structure isomorphisms
\[
\begin{tikzcd}
\K_1 \fcotimes (\K_2 \otimes \K_3) \rar{\alpha} & (\K_1 \fcotimes \K_2) \fcotimes \K_3
\end{tikzcd}
\]
\[
\begin{tikzcd}
\K_1 \fcotimes \K_2 \rar{\sigma} & \K_2 \fcotimes \K_1
\end{tikzcd}
\qquad
\begin{tikzcd}
\K_1 \fcotimes \Ictx \rar{\rho} & \K_1
\end{tikzcd}
\]
by 
\begin{align*}
\alpha(g_1,(g_2,g_3)) &= \cls{((g_1,g_2),g_3)} &
\sigma(g_1,g_2) &= \cls{(g_2,g_1)} &
\rho(g_1,\star) &= \cls{g_1} 
\end{align*}
where $g_i \in G_i$ for $i=1,2,3$. 
In other words, the extent parts of coherence isomorphisms are just like those of $\Rel$, but then followed by the closure operator. The bifunctor and coherence isomorphisms for $\lotimes$ are given in the same way, swapping the symbol $\fcotimes$ with $\lotimes$.

\begin{theorem} \cite{mori2008chu}
$(\Cxt, \lotimes, \Ictx)$ is a symmetric monoidal category. 
\end{theorem}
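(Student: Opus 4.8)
The plan is to exploit the equivalence $\Cxt \simeq \SupLat$ in reverse: rather than check the monoidal axioms directly on contexts, transport the known $*$-autonomous (in particular symmetric monoidal) structure of $(\SupLat, \lotimes)$ along the concept-lattice functor $\clat(-)$. However, since the excerpt has not yet set up that equivalence (it comes in Section \ref{sec:lattices}), a self-contained argument is preferable, so I would instead verify the monoidal category axioms directly, using Lemma \ref{lem:cxt-lem} and Proposition \ref{prop:morphisms-the-same} as the main tools. First I would check that $\lotimes$ is a bifunctor: given closed relations $R_1 \colon \K_1 \to \K_3$, $R_2 \colon \K_2 \to \K_4$, one shows $R_1 \lotimes R_2$, defined by $(R_1 \lotimes R_2)(g_1,g_2) = \cls{R_1(g_1)\times R_2(g_2)}$, is a closed relation $\K_1\lotimes\K_2 \to \K_3\lotimes\K_4$; by Lemma \ref{lem:cxt-lem} this reduces to checking each value is closed (immediate) and that $\cls{(R_1\lotimes R_2)(X)} = \cls{(R_1\lotimes R_2)(\cls{X})}$ for $X \subseteq \G_1\times\G_2$, which follows from \eqref{eq:closure-char} for $R_1, R_2$ separately together with the identity $\cls{\bigcup_i \cls{A_i}} = \cls{\bigcup_i A_i}$ quoted in the proof of Lemma \ref{lem:cxt-lem}. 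Functoriality (preservation of identities and composition) then follows from the composition rule \eqref{eq:comp-rule}, since the extent part of composition in $\Cxt$ is just relational composition followed by closure, exactly as in $\Rel$ modulo closure.

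Next I would check that $\alpha, \sigma, \rho$ (and the left unitor $\lambda$, defined symmetrically to $\rho$) are well-defined morphisms in $\Cxt$ and are isomorphisms. Well-definedness again reduces via Lemma \ref{lem:cxt-lem} to the closure-compatibility equation, which holds because the underlying relations are the coherence isomorphisms of $(\Rel, \times)$ — bijections on the nose — and closure is functorial along them. For invertibility one exhibits the inverse explicitly as the corresponding inverse $\Rel$-bijection followed by closure, and uses \eqref{eq:comp-rule} to see the two composites equal the identity relations $g \mapsto \cls{g}$. The naturality squares for $\alpha$, $\sigma$, $\rho$ likewise commute because, after stripping closures via \eqref{eq:comp-rule}, they are precisely the naturality squares in $(\Rel,\times)$, which are routine.

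Finally, the triangle and pentagon identities, and the symmetry conditions $\sigma^2 = \mathrm{id}$ and the hexagon, all hold because each side of each coherence diagram, evaluated on a generating element $(g_1,\dots,g_n)$, equals the closure of the corresponding reassociation/permutation of the tuple — and these agree already in $\Set$ (hence in $\Rel$), so they agree after applying the single closure operator at the end, using idempotence of closure to collapse nested closures via \eqref{eq:comp-rule}. I expect the main obstacle to be purely bookkeeping: making sure that the reduction ``composite in $\Cxt$ = composite in $\Rel$ then close, and two such are equal iff the $\Rel$-composites are equal'' is applied correctly, in particular that no coherence isomorphism secretly fails to be a $\Cxt$-morphism. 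Since $\K^{**}=\K$ and the dual-context construction is strictly involutive, and since every coherence map here descends from a genuine bijection of underlying sets, this reduction goes through cleanly, so the only real work is organizing it. Alternatively — and this is the route the paper likely takes — one simply cites that $(\Cxt,\lotimes) \cong (\ChuCors,\lotimes)$ from \cite{mori2008chu}, where Mori established the $*$-autonomous structure, so symmetric monoidality is inherited along the isomorphism of categories recorded above; I would state the proof in that one-line form and relegate the direct verification sketched above to a remark.
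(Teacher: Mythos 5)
The paper offers no proof of this theorem at all: it is stated as a citation of Mori, who established the (indeed $*$-autonomous, hence symmetric monoidal) structure on the isomorphic category of Chu correspondences, so your closing one-line form is exactly the paper's approach. Your direct-verification sketch is also the right shape --- it mirrors the paper's own proof of the analogous Theorem \ref{thm:fc-SMC-cxt} for $\fcotimes$ --- but it has one soft spot worth naming: the claim that closure-compatibility of $R_1\lotimes R_2$ ``follows from \eqref{eq:closure-char} for $R_1,R_2$ separately'' hides the real work for $\lotimes$. Unlike $\fcotimes$, the attribute set of $\K_1\lotimes\K_2$ is $\Cxt(\K_1,\K_2^*)$ and its closed subsets of $G_1\times G_2$ are bonds $\K_1\to\K_2^*$, which are not determined coordinatewise; so you need a $\lotimes$-analogue of Lemma \ref{lem:tens-help} proved from scratch (e.g.\ that $\cls{A\times B}=\cls{\cls{A}\times\cls{B}}$ for the bond closure, and that $(R_1\times R_2)^\bullet$ of a bond is a bond, using closedness of rows and columns), since the ``$\g_1\I\m_1$ or $\g_2\I\m_2$'' argument used for $\fcotimes$ does not transfer. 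These facts are true and are exactly what Mori supplies, which is presumably why the paper delegates the whole statement to the citation.
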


Let us now verify the new result that $\fcotimes$ yields a monoidal structure also. We begin with some straightforward results about the tensor. 

\begin{lemma} \label{lem:tens-help}
For any $A\subseteq \G_1$ and $B\subseteq \G_2$, in $\K_1 \fcotimes \K_2$ we have
\begin{enumerate}
\item $A\times B \I C \times D \iff A \I C$ or $B \I D$;
\item $\cls{A \times B} = \cls{\cls{A} \times \cls{B}} = \cls{A} \times \cls{B} \cup (M_1 \times M_2)'$ 
\item 
$\cls{(R_1 \fcotimes R_2)(A \times B)} = \cls{R_1(A) \times R_2(B)}$ for all closed relations $R_1, R_2$.
\end{enumerate}
\end{lemma}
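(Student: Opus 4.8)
The plan is to take the three parts in order, with part~1 carrying all of the conceptual content and parts~2 and~3 reducing to it plus routine manipulation of the FCA derivative operations. For part~1 I would simply unfold the relation $\triangledown$ of $\K_1 \fcotimes \K_2$. The implication $(\Leftarrow)$ is immediate: if $A \I C$ then for every $(a,b) \in A \times B$ and $(c,d) \in C \times D$ we have $a \I c$, hence $(a,b) \triangledown (c,d)$; the case $B \I D$ is symmetric. For $(\Rightarrow)$ I would argue by contraposition: if $A \not\I C$ and $B \not\I D$ there are $a \in A$, $c \in C$ with $\lnot(a \I c)$ and $b \in B$, $d \in D$ with $\lnot(b \I d)$, and then $(a,b) \in A \times B$ and $(c,d) \in C \times D$ witness $A \times B \not\I C \times D$. (If one of $A,B,C,D$ is empty both sides of the equivalence hold trivially.)

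For part~2 the key observation is that part~1, applied with the singleton attribute factors $C = \{m_1\}$ and $D = \{m_2\}$, immediately yields the derivative formula
\[
(A \times B)' = (A' \times M_2) \cup (M_1 \times B')
\]
in $\K_1 \fcotimes \K_2$. Applying part~1 once more, now with singleton object factors, to each of the two rectangles in this union and intersecting the results, I obtain that $(g_1,g_2) \in (A \times B)''$ iff $(g_1 \in \cls{A}$ or $g_2 \in M_2')$ and $(g_1 \in M_1'$ or $g_2 \in \cls{B})$; the same computation applied to the full attribute set gives $(M_1 \times M_2)' = (M_1' \times G_2) \cup (G_1 \times M_2')$. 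I would then simplify the description of $(A \times B)''$ using the elementary fact that $M_i'$, the set of objects possessing every attribute, lies inside every closure $\cls{X}$ (any such object is $\I$-related to all of $X' \subseteq M_i$), which collapses the two Boolean conditions to exactly $(\cls{A} \times \cls{B}) \cup (M_1' \times G_2) \cup (G_1 \times M_2') = \cls{A} \times \cls{B} \cup (M_1 \times M_2)'$. The remaining equality $\cls{A \times B} = \cls{\cls{A} \times \cls{B}}$ then follows because the set just obtained is visibly unchanged on replacing $A, B$ by $\cls{A}, \cls{B}$ (equivalently, one checks in one line from the derivative formula that $A \times B \subseteq \cls{A} \times \cls{B} \subseteq (A \times B)''$).

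For part~3 I would unfold the tensor of morphisms directly from its definition, $(R_1 \fcotimes R_2)(A \times B) = \bigcup_{a \in A,\, b \in B} \cls{R_1(a) \times R_2(b)}$, then apply the general identity $\cls{\bigcup_i \cls{X_i}} = \cls{\bigcup_i X_i}$ to discard the inner closures, and finally the set-theoretic equality $\bigcup_{a \in A,\, b \in B} R_1(a) \times R_2(b) = R_1(A) \times R_2(B)$ (the two existential witnesses being independent), which gives $\cls{(R_1 \fcotimes R_2)(A \times B)} = \cls{R_1(A) \times R_2(B)}$. Closedness of $R_1, R_2$ is not in fact used in this identity, only in ensuring that $R_1 \fcotimes R_2$ names a morphism.

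The one genuinely non-automatic point I expect is the surplus term $(M_1 \times M_2)'$ in part~2: in $\Cxt$, unlike in $\Rel$, the closure of a rectangle $A \times B$ is in general strictly larger than $\cls{A} \times \cls{B}$, the extra elements being exactly the degenerate objects $\triangledown$-related to everything. Keeping track of these through the union/intersection bookkeeping via the inclusion $M_i' \subseteq \cls{X}$ is the step that needs care; everything else is mechanical.
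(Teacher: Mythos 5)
Your proof is correct, and in fact the paper states Lemma \ref{lem:tens-help} without any proof at all, so there is nothing to compare against: your argument supplies exactly the verification the paper leaves implicit. Reducing everything to the disjunctive form of $\triangledown$ in part~1, deriving $(A \times B)' = (A' \times M_2) \cup (M_1 \times B')$, and tracking the surplus term via $M_i' \subseteq \cls{X}$ is the natural (and essentially forced) route, and your handling of the one delicate point --- that $\cls{A \times B}$ strictly exceeds $\cls{A} \times \cls{B}$ by exactly $(M_1 \times M_2)'$ --- is accurate.
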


\begin{theorem} \label{thm:fc-SMC-cxt}
$(\Cxt, \fcotimes, \Ictx)$ is a symmetric monoidal category. 
\end{theorem}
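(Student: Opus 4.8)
The plan is to establish that $(\Cxt, \fcotimes, \Ictx)$ is a symmetric monoidal category by verifying, in turn, that $\fcotimes$ is a well-defined bifunctor, that the stated $\alpha$, $\sigma$, $\rho$ are well-defined isomorphisms in $\Cxt$, that they are natural, and finally that the triangle, pentagon, and symmetry (hexagon and $\sigma^2=\id{}$) coherence axioms hold. The key observation organizing the whole argument is the one already flagged in the text: on the extent side, $\fcotimes$ acts exactly as the product bifunctor of $\Rel$ does, but post-composed with the closure operator. So the strategy is to transport the known symmetric monoidal structure of $(\Rel, \times)$ through the closure, using Lemma \ref{lem:tens-help} to control how closures interact with products and with the maps $R_1 \fcotimes R_2$.

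\textbf{Bifunctoriality.} First I would check that $R_1 \fcotimes R_2$ as defined by \eqref{eq:fc-tensor-Chu} is a closed relation $\K_1 \fcotimes \K_2 \to \K_3 \fcotimes \K_4$. Each value $\cls{R_1(g_1) \times R_2(g_2)}$ is closed by construction, so by Lemma \ref{lem:cxt-lem} it suffices to verify $\cls{(R_1 \fcotimes R_2)(X)} = \cls{(R_1 \fcotimes R_2)(\cls{X})}$ for all $X \subseteq \G_1 \times \G_2$. Using that closures commute with arbitrary unions ($\cls{\bigcup_i A_i} = \cls{\bigcup_i \cls{A_i}}$) together with part~3 of Lemma \ref{lem:tens-help}, one reduces to the case $X = A \times B$; then part~2 of that lemma expresses $\cls{A \times B}$ in terms of $\cls{A}$, $\cls{B}$ and the fixed set $(M_1 \times M_2)'$, and one pushes the computation through $R_1 \fcotimes R_2$ and uses closedness of $R_1$ and $R_2$ individually. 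Functoriality — preservation of identities and composition — then follows by the same reduction: identities send $g \mapsto \cls{g}$, so $\id{} \fcotimes \id{}$ sends $(g_1,g_2) \mapsto \cls{\cls{g_1} \times \cls{g_2}} = \cls{(g_1,g_2)}$ by part~2, which is the identity on $\K_1 \fcotimes \K_2$; and composition is checked by expanding both sides using \eqref{eq:comp-rule}, part~3 of Lemma \ref{lem:tens-help}, and the idempotence of closure.

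\textbf{Coherence isomorphisms and axioms.} Next I would verify that $\alpha$, $\sigma$, $\rho$ are closed relations and are isomorphisms. Since each is the $\Rel$-coherence map followed by closure, checking the closed-relation condition is again a reduction to products via Lemma \ref{lem:tens-help}, and invertibility follows because the underlying $\Rel$-maps are bijections of the extent sets and closure is idempotent — the inverse in $\Cxt$ is the closure of the inverse relation. Naturality of each structure map amounts to an equality of closed relations, which by Lemma \ref{lem:cxt-lem} can be checked on generating elements (singletons) and then reduces, via part~3 of Lemma \ref{lem:tens-help}, to the corresponding naturality square in $\Rel$ composed throughout with closure. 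The triangle and pentagon identities, and the symmetry axioms, are likewise equalities of closed relations that reduce — on singletons, using idempotence of closure and part~3 of Lemma \ref{lem:tens-help} to move all closures to the outside — to the already-known coherence identities for $(\Rel,\times)$.

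\textbf{Main obstacle.} The routine categorical bookkeeping is genuinely routine once the reduction-to-$\Rel$ template is in place; the real content, and the step I expect to require care, is Lemma \ref{lem:tens-help} itself — in particular part~2, the explicit formula $\cls{A \times B} = \cls{A} \times \cls{B} \cup (M_1 \times M_2)'$, and part~3, that $\cls{(R_1 \fcotimes R_2)(A \times B)} = \cls{R_1(A) \times R_2(B)}$. These depend on the precise form of the relation $\triangledown$ defining $\K_1 \fcotimes \K_2$ (namely $(g_1,g_2) \triangledown (m_1,m_2)$ iff $g_1 \I m_1$ or $g_2 \I m_2$), and unwinding the double-prime operator for this disjunctive incidence is the delicate computation underpinning everything else. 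Once Lemma \ref{lem:tens-help} is secured, every clause of the theorem follows by transporting the symmetric monoidal structure of $\Rel$ across the closure operator as sketched above.
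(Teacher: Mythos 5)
Your overall strategy --- transport the symmetric monoidal structure of $(\Rel,\times)$ through the closure operator, using Lemma \ref{lem:tens-help} to control how closures interact with products --- is exactly the paper's, and your treatment of identities, composition, the coherence isomorphisms, naturality, and the coherence axioms matches the paper's proof. However, there is a genuine gap in your very first step, the verification that $R_1 \fcotimes R_2$ as defined by \eqref{eq:fc-tensor-Chu} is a closed relation. You propose to check $\cls{(R_1\fcotimes R_2)(X)} = \cls{(R_1\fcotimes R_2)(\cls{X})}$ for arbitrary $X \subseteq \G_1\times\G_2$ and to ``reduce to the case $X = A\times B$'' using commutation of closure with unions. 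This reduction does not go through: writing $X$ as a union of singletons only lets you rewrite $\cls{X}$ as $\cls{\,\bigcup_i \cls{\{(g_1^i,g_2^i)\}}\,}$, and the outer closure of that union is again the closure of a set that is \emph{not} a product, so you are back to the general case you started from. Parts 2 and 3 of Lemma \ref{lem:tens-help} only describe closures of product sets and give you no handle on $\cls{X}$ for arbitrary $X$; the argument as sketched is circular.

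The paper avoids this entirely by invoking Proposition \ref{prop:morphisms-the-same}: it exhibits the intent-side relation $(m_3,m_4)\mapsto \cls{R_1^*(m_3)\times R_2^*(m_4)}$ and verifies the single equivalence
\[
\cls{R_1(g_1)\times R_2(g_2)} \I (m_3,m_4) \iff (g_1,g_2) \I \cls{R_1^*(m_3)\times R_2^*(m_4)},
\]
which follows in a few lines from the disjunctive form of the incidence $\triangledown$ and the Chu property of $R_1$ and $R_2$. Since a pair of relations with closed values satisfying this condition is automatically a Chu correspondence, and hence a closed relation, well-definedness of the bifunctor on morphisms comes for free. You should replace your direct verification with this Chu-correspondence argument (or, equivalently, observe that $(R_1\fcotimes R_2)^\bullet$ applied to a set of the form $(m_3,m_4)'$ is a prime, hence closed, and that $(R_1\fcotimes R_2)^\bullet(B')$ is an intersection of such sets). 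With that repair, the remainder of your outline is sound and coincides with the paper's proof.
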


\begin{proof}
Firstly, \eqref{eq:fc-tensor-Chu} forms a Chu correspondence with the relation
\[
(R_1 \fcotimes R_2) (m_3, m_4) = \cls{R_1(m_3) \times R_2(m_4)} 
\]
since
\begin{align*}
 \cls{R_1(g_1) \times R_2 (g_2)} \I (m_3, m_4) 
&\iff R_1(g_1) \times R_2 (g_2) \I (m_3, m_4) 
\\ &\iff R_1(g_1) \I m_3 \text{ or } R_2(g_2) \I m_4
\\ &\iff g_1 \I R_1(m_3) \text{ or } g_2 \I R_2(m_4)
\\ &\iff (g_1, g_2) \I \cls{R_1(m_3) \times R_2(m_4)}
\end{align*}
To see that $\fco$ preserves identities, note that
\[
\id{} \fco \id{} (g_1, g_2) = \cls{\cls{g_1} \times \cls{g_2}} = \cls{(g_1,g_2)} = \id{}(g_1,g_2)
\]
Moreover $\fco$ is a bifunctor since 
\begin{align*}
(R_3 \fco R_4) \circ (R_1 \fco R_2) (g_1, g_2)
&= (R_3 \fco R_4)(\cls{R_1(g_1) \times R_2(g_2)})
\\&= \cls{(R_3 \fco R_4)(\cls{R_1(g_1) \times R_2(g_2)})} 
\\&= \cls{(R_3 \fco R_4)(R_1(g_1) \times R_2(g_2))}
\\&= \cls{R_3(R_1(g_1)) \times R_4(R_2(g_2))}
\\&= (R_3 \circ R_1) \fcotimes (R_4 \circ R_2) (g_1, g_2)
\end{align*}
where in the second step we used that the result will be closed as $(R_3 \fco R_4) \circ (R_1 \fco R_2) $ is a closed relation. 

It is straightforward to verify that  the coherence isomorphisms $\alpha, \rho, \sigma$ are valid morphisms and are isomorphisms with inverses defined element-wise in terms of those of $\Rel$, followed by closure operators. We verify naturality of $\alpha$, while naturality of $\rho$ and $\sigma$ are simpler. Using Lemma \ref{lem:tens-help} one may check that  
\[
R_1 \fco (R_2 \fco R_3)(g_1(g_2,g_3)) = \cls{R_1(g_1) \times (R_2(g_2) \times R_3)}
\]
and also $\cls{\alpha(A\times (B\times C)} = \cls{A \times B) \times C}$ for all subsets $A_i$ of $G_i$. It follows that 
\[
\alpha \circ (R_1 \fco (R_2 \fco R_3))(g_1,(g_2,g_3)) = \cls{(R_1(g_1) \times R_2(g_2)) \times R_3(g_3)}
\]
which is straightforwardly seen to be equal to $(R_1 \fco R_2) \fco R_3) \circ \alpha \circ (g_1, (g_2, g_3))$. Hence $\alpha$ is natural. The coherence equations may be verified by using Lemma \ref{lem:tens-help} to reduce to the usual coherence equations in $\Rel$, followed by applying closure operators once at the end. 
\end{proof}


The category $\Ctx$ comes with further structure still. Recall that a symmetric monoidal category $\catC$ is said to have \myemph{discarding} when each object $A$ comes with a chosen morphism $\discard{A} \colon A \to I$, such that $\discard{I} = \id{I}$ and $\discard{A \otimes B} = \lambda \circ (\discard{A} \otimes \discard{B})$. For example, $\Rel$ has discarding with $\discard{A}$ being the relation with $a \mapsto \star$ for all $a \in A$. 

\begin{proposition} \label{prop:further-structure}
 \ 
\begin{enumerate}
\item \label{enum:tens}
For all contexts $\K_1, \K_2$ we have 
\[
(\K_1 \fco \K_2)^* := \K_1^* \fco \K_2^*
\]
Hence the equivalence $(-)^*$ is strong monoidal with respect to $\fco$. 
\item \label{enum:SM-disc}
$(\Ctx,\fco)$ is a symmetric monoidal category with discarding. 
\item \label{enum:embed}
There is a full and faithful strong monoidal functor 
$(\Rel, \times) \hookrightarrow (\Ctx, \fco)$
which preserves discarding and maps $(-)^\dagger$ to $(-)^*$. 
\end{enumerate}
\end{proposition}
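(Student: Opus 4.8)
The plan is to verify the three clauses in order, reusing as much of the machinery already in place as possible — in particular the self-duality lemma, Proposition~\ref{prop:morphisms-the-same}, and Lemma~\ref{lem:tens-help}.

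For clause~\ref{enum:tens}, I would first compute the underlying context of $(\K_1 \fco \K_2)^*$: by definition of $(-)^*$ it swaps objects and attributes and converts the relation, so it is $(\M_1 \times \M_2, \G_1 \times \G_2, \triangledown^\dagger)$, while $\K_1^* \fco \K_2^*$ is $(\M_1 \times \M_2, \G_1 \times \G_2, \triangledown)$ for the relation on the dual contexts. The point is just that the defining condition ``$g_1 \I m_1$ or $g_2 \I m_2$'' is manifestly symmetric in the roles of objects and attributes, so $\triangledown^\dagger$ on $\K_1 \fco \K_2$ agrees with the $\triangledown$ of $\K_1^* \fco \K_2^*$; hence the two contexts are literally equal. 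For morphisms, recall that on the extent part $\fco$ acts exactly as in $\Rel$ followed by closure, and $(-)^*$ is characterised (by Proposition~\ref{prop:morphisms-the-same}) as the unique partner making a Chu correspondence; so I would check that $(R_1 \fco R_2)^*$ and $R_1^* \fco R_2^*$ both serve as this partner, using the Chu-correspondence computation already displayed in the proof of Theorem~\ref{thm:fc-SMC-cxt} (which exhibited $(R_1 \fco R_2)(m_3,m_4) = \cls{R_1(m_3) \times R_2(m_4)}$ as the attribute-side relation). Uniqueness of the Chu partner then forces equality. Strong monoidality of $(-)^*$ amounts to checking that the coherence isomorphisms $\alpha, \sigma, \rho$ are sent to those of $\K_i^*$, which again follows since they are built from the $\Rel$ coherences plus closure and the $\Rel$ coherences are self-dual; this is routine.

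For clause~\ref{enum:SM-disc}, having Theorem~\ref{thm:fc-SMC-cxt} in hand, it remains only to produce the discarding maps and check the two axioms. The natural candidate for $\discard{\K} \colon \K \to \Ictx$ is the closed relation $g \mapsto \cl{\{\star\}} = \{\star\}$ (the whole object-set of $\Ictx$), i.e. the image under closure of the $\Rel$-discarding $g \mapsto \star$; I would check it is a closed relation (its single row $\{\star\}$ is trivially closed in $\Ictx$, and $R^\bullet$ preserves closed sets since there are only two subsets of $\{\star\}$, both closed), that $\discard{\Ictx} = \id{\Ictx}$ (both are $\star \mapsto \cl{\star}$), and that $\discard{\K_1 \fco \K_2} = \rho \circ (\discard{\K_1} \fco \discard{\K_2})$ by evaluating both sides on $(g_1,g_2)$ and using Lemma~\ref{lem:tens-help}(3) to push the tensor through closure: $(\discard{\K_1} \fco \discard{\K_2})(g_1,g_2) = \cls{\{\star\}\times\{\star\}}$, then $\rho$ re-indexes to $\cls{\star} = \{\star\}$, which matches $\discard{\K_1 \fco \K_2}(g_1,g_2)$.

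For clause~\ref{enum:embed}, I would define the functor $E \colon \Rel \to \Ctx$ on objects by $E(A) := \Sx{A} = (A,A,\neq)$ (Example~\ref{ex:SXA}), and on a relation $r \colon A \to B$ by $E(r)(a) := \cl{r(a)}$ — i.e. take the $\Rel$-relation and close its rows. Since in $\Sx{A}$ every subset is closed (Example~\ref{ex:SXA}), closure is the identity on subsets, so in fact $E(r) = r$ as a relation, the closed-relation conditions are automatic, composition in $\Ctx$ reduces to composition in $\Rel$, and identities match (the identity on $\Sx{A}$ is $a \mapsto \cl{a} = \{a\}$). Faithfulness is then immediate and fullness holds because every relation $A \to B$ is a closed relation $\Sx{A}\to\Sx{B}$. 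Strong monoidality: $E(A)\fco E(B) = (A\times B, A\times B, \triangledown)$, and since in $\Sx{A}$ the relation is $\neq$, one computes $(a_1,a_2)\triangledown(b_1,b_2) \iff a_1\neq b_1 \text{ or } a_2\neq b_2 \iff (a_1,a_2)\neq(b_1,b_2)$, so $E(A)\fco E(B) = \Sx{A\times B} = E(A\times B)$, with the coherence isomorphisms matching since again closure is trivial in these contexts; similarly $E$ of the $\Rel$-unit is $\Ictx$. That $E$ preserves discarding is the observation that $E$ of the $\Rel$-discarding $a\mapsto\star$ is exactly $\discard{\Sx{A}}$, and that it sends $(-)^\dagger$ to $(-)^*$ follows from Proposition~\ref{prop:morphisms-the-same}: for $r\colon\Sx{A}\to\Sx{B}$ one has $r^*(b) = r^\bullet(b')' = r^\bullet(B\setminus\{b\})'$, and unwinding the definitions with $\neq$ gives $r^*(b) = \{a : (a,b)\in r\} = r^\dagger(b)$.

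The main obstacle is clause~\ref{enum:embed}'s strong-monoidality bookkeeping, specifically confirming that the coherence isomorphisms $\alpha, \sigma, \rho$ are genuinely preserved on the nose (not merely up to iso) — but this collapses because all contexts in the image of $E$ have trivial closure operators, so every ``closure of an $\Rel$-expression'' appearing in the definitions of $\fco$ and of the coherence maps is just the $\Rel$-expression itself, and $E$ becomes a strict monoidal inclusion. The only genuinely non-formal check anywhere is the identity $\cls{A\times B} = \cls{A}\times\cls{B}\cup(M_1\times M_2)'$ of Lemma~\ref{lem:tens-help}(2), but that is an assumed prior result.
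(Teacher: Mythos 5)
Your treatment of clauses \ref{enum:tens} and \ref{enum:embed} is correct and essentially the paper's: clause \ref{enum:tens} really is immediate because the condition ``$g_1 \I m_1$ or $g_2 \I m_2$'' is symmetric under exchanging objects with attributes, and for clause \ref{enum:embed} the paper likewise takes $A \mapsto \Sx{A}$ and regards every relation as a (automatically closed) morphism, all closure operators in the image of the embedding being trivial.

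The gap is in clause \ref{enum:SM-disc}. Your candidate $\discard{\K} \colon \K \to \Ictx$, the constant relation $g \mapsto \{\star\}$, is not in general a morphism of $\Cxt$, and your justification of condition 2 of closedness checks the wrong thing: what must be verified is that $R^\bullet(Y)$ is closed \emph{in $\K$} for each closed $Y \subseteq \{\star\}$, not that the subsets of $\{\star\}$ are closed in $\Ictx$. Here $R^\bullet(\emptyset) = \{g \mid R(g) \subseteq \emptyset\} = \emptyset$, and $\emptyset$ is closed in $\K$ only when $\cls{\emptyset} = \M' = \emptyset$, i.e.\ when no object has every attribute. For $\K = (\{g\},\{m\},\{(g,m)\})$ this fails, since $\cls{\emptyset} = \{g\}$; equivalently $\cls{R(\cls{\emptyset})} = \{\star\} \neq \emptyset = \cls{R(\emptyset)}$, contradicting Lemma \ref{lem:cxt-lem}. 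The paper avoids this by defining $\discard{\K}$ through its intent relation, $R^*(\star) = \M$; unwinding the Chu correspondence, the extent relation sends $g$ to $\{\star\}$ unless $g \I \M$, in which case it sends $g$ to $\emptyset$. This is also forced by the equivalence with $\SupLat$, where $\discard{V} \colon V \to \TV$ sends $x$ to $1$ iff $x \neq 0$: the bottom concept $(\M', \M)$ must go to $0$, which your constant relation does not achieve when $\M' \neq \emptyset$. Your formula does coincide with the correct one on contexts of the form $\Sx{A}$ (where $\M' = \emptyset$), which is why the preservation-of-discarding claim in clause \ref{enum:embed} still goes through.
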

\begin{proof}
\ref{enum:tens} is immediate from the definitions. For \ref{enum:SM-disc}, on each object $\K$ we set $\discard{\K}$ to have intent relation $R$ satisfying $R(\star) = M$. Then $\discard{\Ictx} = \id{}$, and we have $\discard{\K_1 \fco \K_2}(\star) = \cls{M_1 \times M_2} = M_1 \times M_2$. 

For \ref{enum:embed}, via Example \ref{ex:SXA} we define the embedding by $A \mapsto \Sx{A}$ and viewing each relation $R \colon A \to B$ as a closed relation. By construction $\Sx{A} = \Sx{A}^*, \Sx{A \times B} = \Sx{A} \fcotimes \Sx{B}$ and one may verify that $\Sx{R}^* = \Sx{R^\dagger}$ for each relation $R$. 
\end{proof}

However, $\lotimes$ is even more well-behaved, in the following sense. Recall that a self-dual symmetric monoidal category $\catC$ is \emph{$^*$-autonomous} when it comes with natural isomorphisms $\catC(A \otimes B, C^*) \simeq \catC(A, (B \otimes C)^*)$. 

\begin{theorem} \cite{mori2008chu}
$(\Cxt, \lotimes, \Ictx)$ is a $^*$-autonomous category. 
\end{theorem}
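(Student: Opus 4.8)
The plan is to verify the definition of $^*$-autonomy directly. We already know that $(\Cxt,\lotimes,\Ictx)$ is a symmetric monoidal category and that $(-)^*$ makes $\Cxt$ self-dual, so by the definition recalled above it remains only to exhibit a bijection $\Cxt(A \lotimes B, C^*) \cong \Cxt(A, (B \lotimes C)^*)$ natural in the contexts $A, B, C$. I would obtain this by showing that \emph{both} of these hom-sets describe one and the same, manifestly $A,B,C$-symmetric, set of relations on $\G_A \times \G_B \times \G_C$.

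The key preliminary step would be a concrete description of the extents of a lattice tensor. Viewing each attribute of $\K_1 \lotimes \K_2$ through the bond it induces (Proposition~\ref{prop:morphisms-the-same}), the attribute set is precisely the set of relations $\beta \subseteq \G_1 \times \G_2$ all of whose rows $\beta(\g_1)$ are extents of $\K_2$ and all of whose columns $\beta^\dagger(\g_2)$ are extents of $\K_1$; call such a $\beta$ \emph{doubly closed}. Since an extent of $\K_1 \lotimes \K_2$ is by definition an intersection of such relations, and an intersection of extents is again an extent in any context, I would conclude that $X \subseteq \G_1 \times \G_2$ is an extent of $\K_1 \lotimes \K_2$ if and only if it is doubly closed. (This is the context-side version of the standard presentation of the sup-lattice tensor as a lattice of Galois-closed relations, cf.\ Section~\ref{sec:lattices}.)

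Next I would unwind both hom-sets via the isomorphism $\Cxt \simeq \Bonds$. A morphism $A \lotimes B \to C^*$ is a bond, i.e.\ a relation $R \subseteq (\G_A \times \G_B) \times \G_C$ such that each $R(a,b) \subseteq \G_C$ is an extent of $C$ and each $R^\dagger(c) \subseteq \G_A \times \G_B$ is an extent of $A \lotimes B$; by the previous step this last condition says that $\{b \mid (a,b,c)\in R\}$ is an extent of $B$ and $\{a \mid (a,b,c)\in R\}$ is an extent of $A$, for all $c$. Dually a morphism $A \to (B \lotimes C)^*$ is a bond $R \subseteq \G_A \times (\G_B \times \G_C)$ with each $R(a)$ an extent of $B \lotimes C$ and each $R^\dagger(b,c)$ an extent of $A$, which unwinds to: $\{a \mid (a,b,c)\in R\}$, $\{b \mid (a,b,c)\in R\}$ and $\{c \mid (a,b,c)\in R\}$ are extents of $A$, $B$ and $C$ respectively. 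Hence both hom-sets are literally the same subset of $\pset(\G_A \times \G_B \times \G_C)$: the relations whose slice in each coordinate, with the other two fixed, is an extent of the corresponding context.

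Finally I would check that the resulting identity bijection is natural. Under the two identifications, pre- or post-composing with a morphism of $\Cxt$ acts on such a balanced relation by its relational image (or preimage) along the given closed relation in the relevant coordinate, followed by closure in the ambient tensor, and this description does not depend on which of the two hom-sets the relation is regarded as inhabiting; so naturality is automatic. This argument also re-establishes, for $\Cxt$, the $^*$-autonomy of Mori's isomorphic category $\ChuCors$ \cite{mori2008chu}. I expect the only genuine obstacle to be the bookkeeping: proving the extent lemma cleanly and carefully matching the two bracketings of $\G_A \times \G_B \times \G_C$ used on the two sides; once the balanced-relation description is in place, the statement follows immediately.
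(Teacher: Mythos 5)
Your argument is essentially correct, and it is a genuinely different route from the paper's, since the paper simply imports the theorem from Mori's work (and the natural proof in the spirit of Section~\ref{sec:lattices} would be to transport $^*$-autonomy from $(\SupLat,\lotimes,\TV,*)$ along the equivalence of Theorem~\ref{thm:equiv-cat-level}). Your key lemma is right: the attributes of $\K_1 \lotimes \K_2$, read as bonds, are exactly the doubly closed relations; an extent is by definition an intersection $Y' = \bigcap_{\beta \in Y}\beta$ of attributes, intersections of extents are extents coordinatewise, and each single attribute $\beta$ equals $\{\beta\}'$, so extents coincide with doubly closed relations. Unwinding both hom-sets through $\Cxt \simeq \Bonds$ (taking care that ``closed attribute-set of $\K^*$'' means ``extent of $\K$'') then does yield the same set of balanced ternary relations on $\G_A \times \G_B \times \G_C$, namely those whose slice in each coordinate, the other two fixed, is an extent of the corresponding context. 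This is the concept-lattice incarnation of the standard presentation of the sup-lattice tensor by Galois-closed bi-ideals, and it buys a self-contained, elementary proof inside $\Cxt$ where the paper offers none. The one place where you are too quick is naturality: calling it ``automatic'' hides the actual content, since pre- and post-composition in $\Bonds$ is given by the formula $(B_2 \circ B_1)(g) = (B_2)_\ua(B_1(g)')$ rather than by a bare relational image, and one must check that, say, post-composing with $h^* \colon C^* \to C'^*$ on one side agrees with post-composing with $(\id{B} \lotimes h)^*$ on the other as operations on balanced relations. This is a routine but non-trivial computation (it is where Mori's proof does its work), and it should be carried out rather than asserted; relative to the paper's minimal definition of $^*$-autonomy, nothing else is missing.
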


On the other hand, since $(\Cxt, \lotimes)$ is not compact closed, it follows that $\K_1^* \lotimes \K_2^* \not \simeq (\K_1 \lotimes \K_2)^*$. 

\section{Categories of Lattices} \label{sec:lattices}

We now study how our category $\Cxt$ and its monoidal structures relate to those of complete lattices, via the concept lattice construction. 

Throughout, we write $\SupLat$ for the category of complete sup-lattices. That is, the objects are complete lattices (with $0, 1$) and the morphisms are mappings $f \colon V_1 \to V_2$ which preserve arbitrary suprema. Similarly we write $\InfLat$ for the category of complete inf-semilattices. 

There is an isomorphism of categories $\SupLat \simeq \InfLat$ given by simply switching $\leq$ with $\geq$. Moreover, both categories are self-dual, with 
\[
(-)^* \colon \SupLat^\op \simeq \SupLat
\]
sending each lattice $V = (V,\leq)$ to the opposite lattice $V^* = (V,\geq)$ and $f \colon V \to W$ to its adjoint $f^* \colon W^* \to V^*$. Since $f$ preserves suprema, $f^*$  preserves infima $W \to V$ and hence suprema $W^* \to V^*$. We denote the 2-element complete lattice by $\TV := \{0 \leq 1\}$.

 Recall that any context $\K$ defines its concept lattice lattice $\clat(\K)$ and any complete lattice $V$ defines a context $F(V)$ via Example \ref{ex:DM}. A key fact is the following, which is essentially from \cite{ganter1999formal}, and more explicitly in \cite{mori2008chu}.

\begin{theorem} \label{thm:equiv-cat-level}
There is a $(-)^*$-preserving equivalence of categories 

\begin{equation} \label{eq:lattice-equiv}
\begin{tikzcd}
\Cxt \arrow[rr,"\clat{(-)}", bend left=15] & \simeq & \SupLat
\arrow[ll,"F", bend left=15]
\end{tikzcd}
\end{equation}
\end{theorem}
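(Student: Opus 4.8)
The plan is to exhibit the functors $\clat(-) \colon \Cxt \to \SupLat$ and $F \colon \SupLat \to \Cxt$ explicitly, show each is well-defined on morphisms, and then construct the natural isomorphisms witnessing the equivalence, checking compatibility with the self-dualities at the end. First I would define $\clat(-)$ on objects using the Basic Theorem of FCA, which already gives that $\clat(\K)$ is a complete lattice. On a morphism $R \colon \K_1 \to \K_2$, I would define $\clat(R) \colon \clat(\K_1) \to \clat(\K_2)$ on extents by $A \mapsto \cls{R(A)}$ (this is well-defined on concepts precisely because the rows $R(g)$ are closed and because of Lemma~\ref{lem:cxt-lem}); one then checks it preserves arbitrary suprema, using the formula for $\bigvee$ in the Basic Theorem together with the identity $\cls{\bigcup_i \cls{A_i}} = \cls{\bigcup_i A_i}$. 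Functoriality (preservation of identities and composition) is exactly the content of the composition rule $\cl{(S\circ R)(A)} = \cl{S(R(A))}$ from Lemma~\ref{lem:cxt-lem}. Conversely, $F(V) = (V,V,\leq)$, and given a sup-preserving $f\colon V \to W$ I would define $F(f)$ as the relation $v \mathrel{F(f)} w \iff f(v) \leq w$; since $f$ has a right adjoint (being sup-preserving between complete lattices), $F(f)(v) = \mathord{\downarrow} f(v)$ is closed in $F(W)$, and $F(f)^\bullet$ is given by the adjoint, which preserves closed sets, so $F(f)$ is a closed relation. Functoriality of $F$ follows because composition of closed relations implements composition of the underlying adjoint pairs.

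Next I would establish the two natural isomorphisms. For $\clat(F(V)) \cong V$: by Example~\ref{ex:DM} the map $v \mapsto (\mathord{\downarrow}v, \mathord{\uparrow}v)$ is an order-isomorphism $V \to \clat(F(V))$, and I would verify it is natural in $V$, i.e. that it intertwines $f$ with $\clat(F(f))$ — this is a direct unwinding using the adjunction defining $F(f)$. For the other direction, $F(\clat(\K)) \simeq \K$ in $\Cxt$: here the relevant statement is not an isomorphism of contexts but of objects of $\Cxt$, so I would produce mutually inverse closed relations between $\K$ and $F(\clat(\K))$. The natural candidate sends $g \in \G$ to the principal concept $(\cls{g}, g')$, i.e. to $\mathord{\downarrow}(\cls{g},g')$ in $F(\clat(\K))$; closedness follows since $\clat(\K)$ is complete and this is a principal downset, and one checks the induced map on concept lattices is the identity, so this morphism is invertible in $\Cxt$. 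Naturality in $\K$ again reduces to the composition rule of Lemma~\ref{lem:cxt-lem}.

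Finally I would check the equivalence preserves $(-)^*$: on objects, $\clat(\K^*) = \clat((M,G,\I^\dagger))$ is the opposite lattice of $\clat(\K)$ via $(A,B) \mapsto (B,A)$, which is exactly $\clat(\K)^*$; on morphisms, one uses Proposition~\ref{prop:morphisms-the-same} — the morphism $R^* \colon \K_1^* \to \K_2^*$ is the one whose $(R,R^*)$ is a Chu correspondence, and its action on intents $B \mapsto \cls{R^*(B)}$ is precisely the adjoint of $\clat(R)$, hence $\clat(R^*) = \clat(R)^*$ after identifying opposite lattices. Both natural isomorphisms can then be checked to respect $(-)^*$ by a symmetry argument, since all the data was defined symmetrically in objects and attributes via the Chu/bond correspondence.

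The main obstacle I expect is fullness of $\clat(-)$ on morphisms: given a sup-preserving $h \colon \clat(\K_1) \to \clat(\K_2)$, I must produce a closed relation $R$ with $\clat(R) = h$, and show the assignment is injective. The natural choice is $R(g) = $ the extent of $h(\cls{g}, g')$, and the work is to verify (i) $R$ is a closed relation — the preservation of closed sets by $R^\bullet$ is where one must use that $h$ preserves suprema and hence that its adjoint preserves infima, translated through the concept-lattice meet/join formulas — and (ii) that $\clat(R)$ recovers $h$ on all concepts, not just principal ones, which uses that every concept is a supremum of principal concepts and that both $h$ and $\clat(R)$ preserve that supremum. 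Faithfulness is comparatively easy: if $\clat(R) = \clat(S)$ then in particular they agree on principal concepts, forcing $\cls{R(g)} = \cls{S(g)}$, hence $R(g) = S(g)$ since rows are closed. Assembling these, full faithfulness plus essential surjectivity (immediate from $\clat(F(V)) \cong V$) gives the equivalence, and the naturality checks upgrade it to a $(-)^*$-preserving one.
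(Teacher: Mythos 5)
Your proposal is correct and follows essentially the same route as the paper: the functor $\clat(-)$ is defined on a morphism $R$ by $A \mapsto \cls{R(A)}$ exactly as in \eqref{eq:suplatfuncdef}, $F$ sends $f$ to the closed relation determined by its graph (paired with its order adjoint), and the equivalence is obtained from bijectivity on homsets plus $V \simeq \clat(F(V))$, with $(-)^*$-preservation checked via the Chu correspondence. The only difference is that you spell out the full faithfulness and naturality arguments that the paper delegates to citations of Ganter--Wille and Mori, and your details (sup-density of principal concepts for fullness, closedness of rows for faithfulness, the adjoint for $F(f)^\bullet$) are the right ones.
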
 
\begin{proof}
For each morphism $R \colon \K_1 \to \K_2$ we define a join preserving map $\clat(R) \colon \clat(\K_1) \to \clat(\K_2)$ by
\begin{equation} \label{eq:suplatfuncdef}
\clat(R)(A,A') := (\cl{R(A)},R(A)')
\end{equation}
for each $(A, A') \in \clat(\K_1)$. Conversely, for any complete sup-lattice morphism $f \colon V_1 \to V_2$ we define $F(f) \colon F(V_1) \to F(V_2)$ to have by $F(f) = f$ which is indeed a closed relation, forming a Chu correspondence with its order adjoint $f^*$. 

Noting that $R(A)' = B_\ua(A)$ via \eqref{eq:up-useful}, the assignment \eqref{eq:suplatfuncdef} is a bijection on homsets by \cite[Theorem 53, Corollary 112]{ganter1999formal}, and is functorial by \cite[Proposition 113]{ganter1999formal}, as is $F$. Every complete lattice $V$ is readily shown to satisfy $V \simeq \clat(F(V))$, making this an equivalence \cite[Theorem 73]{mori2008chu}. It is easy to check that $\clat(-)^* =\clat((-)^*)$, ensuring that $F$ preserves $(-)^*$ also. 
\end{proof}

In particular it follows that the category $\Cxt$ is complete and co-complete. The following result captures the fact that `states in $\Cxt$ are concepts', giving another description of the functor $\clat(-)$, also from \cite{mori2008chu}. 

\begin{lemma} \label{lem:states-are-concepts}
Each homset $\Cxt(\K_1, \K_2)$ forms a complete lattice under inclusion $\subseteq$ of relations and there are natural isomorphisms 
\begin{align}
\Cxt(\Ictx, -) &\simeq \clat(-)  \label{eq:state-isom}
\\  \Cxt(-, \Ictx) &\simeq \clat(-)^* \label{eq:eff-isom}
\end{align}
\end{lemma}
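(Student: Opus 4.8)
The plan is to compute the homsets of $\Cxt$ directly, first pinning down the lattice structure and then reading off both isomorphisms from the very simple shape of the trivial context $\Ictx = \Sx{\{\star\}}$.

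\textbf{Homsets are complete lattices.} I would show that $\Cxt(\K_1, \K_2)$, ordered by inclusion of closed relations, admits arbitrary suprema; this suffices, since a poset with all suprema (the empty one supplying a least element) is a complete lattice. Given a family $(R_i)_{i \in I}$ of closed relations $\G_1 \to \G_2$, put $R(\g) := \cls{\bigcup_{i \in I} R_i(\g)}$ for $\g \in \G_1$. Each $R(\g)$ is closed by construction, and for every $A \subseteq \G_1$,
\[
\cls{R(A)} = \cls{\bigcup_{i} R_i(A)} = \cls{\bigcup_{i} R_i(\cls{A})} = \cls{R(\cls{A})},
\]
using $\cls{\bigcup_j \cls{A_j}} = \cls{\bigcup_j A_j}$ and the closedness of each $R_i$, so $R$ is a closed relation by Lemma~\ref{lem:cxt-lem}. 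Clearly $R_i \subseteq R$ for all $i$, and if $S$ is any closed relation with $R_i \subseteq S$ for all $i$ then $\bigcup_i R_i(\g) \subseteq S(\g)$, hence $R(\g) = \cls{\bigcup_i R_i(\g)} \subseteq \cls{S(\g)} = S(\g)$; thus $R = \bigvee_i R_i$. Concretely, suprema in $\Cxt(\K_1, \K_2)$ are taken rowwise as suprema in $\clat(\K_2)$, and one checks similarly that pre- and post-composition preserve them.

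\textbf{The isomorphism \eqref{eq:state-isom}.} A morphism $\Ictx \to \K$ is a closed relation $R \colon \{\star\} \to \G$, hence is determined by the single subset $R(\star) \subseteq \G$. Condition~(1) in the definition of a closed relation says precisely that $R(\star)$ is closed, while condition~(2) is automatic, since $R^\bullet$ takes values in $\pset(\{\star\})$, every element of which is closed in $\Ictx = \Sx{\{\star\}}$ by Example~\ref{ex:SXA}. So $R \mapsto (R(\star), R(\star)')$ identifies $\Cxt(\Ictx, \K)$ with the poset of closed subsets of $\G$ paired with their derived sets, i.e.\ with $\clat(\K)$, and this is an order-isomorphism carrying suprema to suprema by the previous paragraph together with the basic theorem of FCA. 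For naturality, given $S \colon \K \to \K'$ the map $\Cxt(\Ictx, S)$ sends $R$ to $S \circ R$, and hence sends $A := R(\star)$ to $(S\circ R)(\star) = \cls{S(A)}$, whose derived set is $\cls{S(A)}' = S(A)'$; this is exactly $\clat(S)(A, A') = (\cls{S(A)}, S(A)')$ from \eqref{eq:suplatfuncdef}, so the naturality square commutes.

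\textbf{The isomorphism \eqref{eq:eff-isom}, and the main obstacle.} Rather than redo the computation, I would obtain \eqref{eq:eff-isom} from self-duality: since the relation $\neq$ is symmetric we have $\Ictx^* = \Ictx$, so $R \mapsto R^*$ gives a natural isomorphism $\Cxt(-, \Ictx) \simeq \Cxt(\Ictx^*, (-)^*) = \Cxt(\Ictx, (-)^*)$, naturality being the functoriality identity $(R \circ S)^* = S^* \circ R^*$. Composing with \eqref{eq:state-isom} and with $\clat((-)^*) = \clat(-)^*$ from Theorem~\ref{thm:equiv-cat-level} yields $\Cxt(-, \Ictx) \simeq \clat((-)^*) = \clat(-)^*$. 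The only step needing genuine care is verifying that the rowwise closure $\g \mapsto \cls{\bigcup_i R_i(\g)}$ is again a closed relation, for which Lemma~\ref{lem:cxt-lem} does all the work; everything else is routine bookkeeping. One should also note that ordering $\Cxt(\K_1, \K_2)$ by inclusion of closed relations $\G_1 \to \G_2$ is order-\emph{dual} to ordering the corresponding bonds $\G_1 \to \M_2$ by inclusion, because $B(\g_1) = R(\g_1)'$ reverses inclusions; this is harmless, as the order-dual of a complete lattice is again one.
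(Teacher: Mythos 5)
Your proof is correct, and it is more self-contained than the one in the paper, which mostly delegates the work to the equivalence $\Cxt \simeq \SupLat$ of Theorem \ref{thm:equiv-cat-level}. For the complete-lattice structure the paper transports the pointwise order on $\SupLat(\clat\K_1,\clat\K_2)$ across the homset bijection (implicitly using that this bijection is an order isomorphism for inclusion of relations versus the pointwise order), whereas you construct suprema directly as rowwise closures $\g \mapsto \cls{\bigcup_i R_i(\g)}$ and verify closedness via Lemma \ref{lem:cxt-lem}; this avoids the transport step and makes the lattice operations explicit. For \eqref{eq:state-isom} your computation matches the paper's stated map $R \mapsto (R(\star), R(\star)')$, and your observation that condition (2) of closedness is automatic because every subset of $\{\star\}$ is closed in $\Ictx$ is a nice explicit justification the paper omits. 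For \eqref{eq:eff-isom} the paper gives the direct map $S \mapsto (S(\star)', S(\star))$, while you derive it from the self-duality $(-)^*$ together with $\Ictx^* = \Ictx$ and $\clat((-)^*) = \clat(-)^*$; this buys you naturality for free from functoriality of $(-)^*$, at the cost of relying on the duality lemma and on the identity $\clat((-)^*) = \clat(-)^*$, which the paper only asserts in passing. Your closing remark that the inclusion order on closed relations is dual to that on the corresponding bonds is accurate and worth keeping, since readers comparing with the $\Bonds$ presentation could otherwise be confused about which order is meant.
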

\begin{proof}
The first statement follows from the point-wise ordering on maps in $\SupLat$. 
Now \eqref{eq:state-isom} sends each closed relation $R \colon \{\star\} \to \G$ to the concept $(R(\star),R(\star)')$, and for \eqref{eq:eff-isom} send each closed relation $S \colon \{\star\} \to \M$ to the concept $(S(\star)',S(\star))$. The details may be checked directly, or using the equivalence \eqref{eq:lattice-equiv} and that $\SupLat(\TV,V) \simeq V$ by sending each $f \colon \TV \to V$ to $f(1)$.
\end{proof}

Thanks to the equivalence \eqref{eq:lattice-equiv} each of our monoidal structures $\lotimes, \fcotimes$ on $\Cxt$ corresponds to a monoidal structure on $\SupLat$, and we now describe each.

\subsection{The Lattice Tensor}

As our naming suggests, the lattice tensor on $\Cxt$ corresponds to the most well-known monoidal structure on $\SupLat$, which in fact makes it a *-autonomous category. 
For any complete lattices $V_1, V_2$ we define $(V_1 \multimap V_2) := \SupLat(V_1, V_2)$, which forms a complete lattice under the point-wise ordering of maps, and then 
\[
V_1 \lotimes V_2 := (V_1 \multimap V_2^*)^*
\]
Alternatively, $V_1 \lotimes V_2$ may be represented as the collection of \emph{bi-ideals} in $V_1, V_2$. The following is well-known. 

\begin{theorem}
$(\SupLat, \lotimes, \TV, *)$ is a *-autonomous category. 
\end{theorem}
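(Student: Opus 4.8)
The plan is to leverage the equivalence of categories $\Cxt \simeq \SupLat$ from Theorem~\ref{thm:equiv-cat-level} together with the already-established fact (\cite{mori2008chu}) that $(\Cxt, \lotimes, \Ictx)$ is a $^*$-autonomous category. Since $\clat(-)$ and $F$ are inverse equivalences preserving $(-)^*$, and since $\lotimes$ on $\SupLat$ is \emph{defined} precisely so as to make $\clat(-)$ send $\lotimes$ on $\Cxt$ to $\lotimes$ on $\SupLat$ (i.e., $\clat(\K_1 \lotimes \K_2) \simeq \clat(\K_1) \lotimes \clat(\K_2)$ naturally), any property of $(\Cxt, \lotimes)$ expressible purely in terms of the monoidal structure, the duality $(-)^*$, and natural isomorphisms of homsets transports across the equivalence. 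Being $^*$-autonomous is exactly such a property: it asks for natural isomorphisms $\catC(A \otimes B, C^*) \simeq \catC(A, (B \otimes C)^*)$ compatible with the self-duality, and equivalences of categories preserve self-dual symmetric monoidal structure and all natural isomorphisms of representable functors. So the one-line version of the proof is: transport the $^*$-autonomous structure along $(\Cxt,\lotimes) \simeq (\SupLat,\lotimes)$.

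To make this rigorous I would first record that $\clat(-) \colon (\Cxt, \lotimes, \Ictx) \to (\SupLat, \lotimes, \TV)$ is a symmetric monoidal equivalence. The unit is handled by $\clat(\Ictx) \simeq \pset(\{\star\}) \simeq \TV$, which is Example~\ref{ex:SXA} combined with Lemma~\ref{lem:states-are-concepts}. The tensor-compatibility isomorphism $\clat(\K_1 \lotimes \K_2) \simeq \clat(\K_1) \lotimes \clat(\K_2)$ should be read off directly: by Lemma~\ref{lem:states-are-concepts}, $\clat(\K) \simeq \Cxt(\Ictx, \K)$, and $\clat(\K_1) \lotimes \clat(\K_2) = (\clat(\K_1) \multimap \clat(\K_2)^*)^* = \SupLat(\clat(\K_1), \clat(\K_2)^*)^*$, which under the equivalence is $\Cxt(\K_1, \K_2^*)^*$; meanwhile the extent part of $\K_1 \lotimes \K_2$ is $G_1 \times G_2$ and its attribute set is literally $\Cxt(\K_1, \K_2^*)$, so $\clat(\K_1 \lotimes \K_2)$, whose concepts are determined by their intents, is dual to $\Cxt(\K_1, \K_2^*)$. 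Checking that these isomorphisms satisfy the hexagon and unit coherence diagrams — and are natural in both arguments — is the routine bookkeeping step; it follows because all the coherence isomorphisms of $\lotimes$ on $\Cxt$ were defined to mimic those of $\Rel$ followed by closure, and the equivalence intertwines them with the corresponding maps on sup-lattices.

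Given that $\clat(-)$ is a $(-)^*$-preserving symmetric monoidal equivalence, the final step is a general-nonsense transport lemma: if $F \colon \catC \to \catD$ is a symmetric monoidal equivalence compatible with self-dualities $(-)^*$ on each side, and $\catC$ is $^*$-autonomous, then so is $\catD$, via
\[
\catD(A \otimes B, C^*) \simeq \catC(FA \otimes FB, (FC)^*) \simeq \catC(FA, (FB \otimes FC)^*) \simeq \catD(A, (B \otimes C)^*),
\]
where the outer isomorphisms use fully-faithfulness of $F$ together with the monoidal and $(-)^*$-coherence isomorphisms, and the middle one is the $^*$-autonomy of $\catC$; naturality is inherited. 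Applying this with $F = \clat(-)$, $\catC = (\Cxt, \lotimes)$, $\catD = (\SupLat, \lotimes)$ gives the theorem.

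The main obstacle is not conceptual but the verification that $\clat(-)$ really is \emph{monoidal} (strong monoidal, with coherent structure isomorphisms) rather than merely sending one tensor to the other on objects — one must pin down the isomorphism $\clat(\K_1 \lotimes \K_2) \simeq \clat(\K_1) \lotimes \clat(\K_2)$ and check its naturality and coherence. An alternative, if one prefers to avoid the equivalence, is to prove $^*$-autonomy of $(\SupLat, \lotimes)$ directly: the defining internal-hom $V_1 \multimap V_2 = \SupLat(V_1, V_2)$ with the pointwise order manifestly satisfies $\SupLat(V_1 \lotimes V_2, V_3) = \SupLat(V_1 \lotimes V_2^{**}, V_3) \cong \SupLat(V_1, V_2^* \multimap V_3) \cong \SupLat(V_1, (V_2 \lotimes V_3^*)^*)$ once one establishes the hom-tensor adjunction $\SupLat(V_1 \lotimes V_2, V_3) \cong \SupLat(V_1, V_2 \multimap V_3)$ — but since this adjunction and the whole $^*$-autonomous structure on $\SupLat$ is classical and cited as well-known, transporting along the equivalence from the already-proved $\Cxt$ case is the cleaner route and the one I would take.
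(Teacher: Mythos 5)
Your proof is essentially sound, but it takes the opposite route to the paper: the paper offers no argument at all here, simply citing the statement as the classical, well-known fact (the tensor $V_1 \lotimes V_2 = (V_1 \multimap V_2^*)^*$ with the hom-tensor adjunction $\SupLat(V_1 \lotimes V_2, V_3) \cong \SupLat(V_1, V_2 \multimap V_3)$ is the standard direct construction of the *-autonomous structure on sup-lattices), which is exactly the ``alternative'' you sketch in your final paragraph and then decline to pursue. Your preferred route --- transporting *-autonomy from $(\Cxt,\lotimes)$ along the equivalence --- is formally legitimate within this paper, since the *-autonomy of $(\Cxt,\lotimes)$ is cited independently from Mori; but be aware of two costs. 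First, the ``main obstacle'' you identify (that $\clat(-)$ is \emph{strong} monoidal for $\lotimes$, with coherent and natural isomorphisms $\clat(\K_1\lotimes\K_2)\simeq\clat(\K_1)\lotimes\clat(\K_2)$) is precisely the content of the very next theorem in the paper, also cited to Mori, so your proof quietly consumes the harder subsequent result to obtain the easier prior one; your sketch of that isomorphism (``$\clat(\K_1\lotimes\K_2)$, whose concepts are determined by their intents, is dual to $\Cxt(\K_1,\K_2^*)$'') elides a genuine verification, namely that the closed intent-sets of $\K_1\lotimes\K_2$ ordered by inclusion recover the opposite of the complete lattice $\Cxt(\K_1,\K_2^*)$. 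Second, at the level of the literature there is a risk of circularity: Mori's proof that the category of Chu correspondences is *-autonomous is itself built on the correspondence with complete lattices, so the ``classical'' fact you are trying to avoid is arguably upstream of the fact you are importing. For both reasons the direct hom-tensor-adjunction argument you mention at the end is the cleaner and more honest proof, and is the one the paper's ``well-known'' is pointing at.
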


Moreover, Mori has established the following. 

\begin{theorem} \cite{mori2008chu}
The functors $(\clat,F)$ yield a *-autonomous equivalence
\[
(\Cxt, \lotimes, \Ictx, *) \simeq (\SupLat, \lotimes, \TV, *)
\]
\end{theorem}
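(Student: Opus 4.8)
The plan is to leverage the equivalence of categories $(\clat, F) \colon \Cxt \simeq \SupLat$ established in Theorem \ref{thm:equiv-cat-level}, together with the fact that $(\Cxt, \lotimes, \Ictx, *)$ is already known to be $^*$-autonomous, and simply transport that structure along the equivalence. Concretely, since $\clat$ is an equivalence, it suffices to check that $\clat$ sends the $\lotimes$-structure on $\Cxt$ to the $\lotimes$-structure on $\SupLat$ up to natural isomorphism, i.e. that there is a natural isomorphism $\clat(\K_1 \lotimes \K_2) \cong \clat(\K_1) \lotimes \clat(\K_2)$ in $\SupLat$, compatible with the associator, unitors, symmetry, and the duality $(-)^*$, so that the resulting monoidal functor is in fact $^*$-autonomous.

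The first and main step is to identify $\clat(\K_1 \lotimes \K_2)$. By definition the extent part of $\K_1 \lotimes \K_2$ is $G_1 \times G_2$ with attributes $\Cxt(\K_1, \K_2^*)$, so a concept of $\K_1 \lotimes \K_2$ is (essentially) a relation $G_1 \to G_2$ together with its orthogonal set of bonds; one checks that such closed relations are exactly the closed relations $R \colon G_1 \to G_2$, i.e. the morphisms $\K_1 \to \K_2^*$ in $\Cxt$. Hence by Lemma \ref{lem:states-are-concepts} we get $\clat(\K_1 \lotimes \K_2) \cong \Cxt(\Ictx, \K_1 \lotimes \K_2)$, and on the lattice side $\SupLat(\TV, \clat(\K_1)\lotimes\clat(\K_2)) \cong \clat(\K_1)\lotimes\clat(\K_2) = (\clat(\K_1) \multimap \clat(\K_2)^*)^*$; unwinding $V_1 \multimap V_2 = \SupLat(V_1,V_2)$ and using the equivalence plus the duality-compatibility $\clat(\K^*) = \clat(\K)^*$ from Theorem \ref{thm:equiv-cat-level}, this becomes $\Cxt(\K_1,\K_2^*)^*$ up to isomorphism — matching the attribute set of $\K_1 \lotimes \K_2$. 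So the concept lattice of $\K_1 \lotimes \K_2$ is exactly $\clat(\K_1) \lotimes \clat(\K_2)$, and the unit matches since $\clat(\Ictx) \cong \TV$.

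Next I would verify that these identifications are natural in $\K_1, \K_2$ and that the coherence data agree. Because the $\lotimes$ coherence isomorphisms on $\Cxt$ are given on extents by the coherence maps of $\Rel$ followed by closure, and because the $\lotimes$ structure on $\SupLat$ is built functorially from $\multimap$ and $(-)^*$, naturality and coherence should reduce to the corresponding facts already known in each setting together with the naturality of the isomorphisms in Lemma \ref{lem:states-are-concepts}; no genuinely new coherence computation is needed, only bookkeeping along the equivalence. Finally, since $\clat$ preserves $(-)^*$ and carries $\lotimes$ to $\lotimes$ and $\Ictx$ to $\TV$, and since $^*$-autonomy is the property that $\catC(A\otimes B, C^*) \simeq \catC(A,(B\otimes C)^*)$ naturally, the isomorphisms witnessing $^*$-autonomy of $(\Cxt, \lotimes)$ transport directly to $(\SupLat, \lotimes)$ via $\clat$; equivalently, one cites that a monoidal equivalence between self-dual SMCs transports $^*$-autonomous structure.

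The step I expect to be the main obstacle is the first one: pinning down precisely that $\clat(\K_1 \lotimes \K_2) \cong \clat(\K_1)\lotimes\clat(\K_2)$ as complete lattices, naturally in both arguments. This requires carefully matching the "bi-ideal" / $\multimap$ description of the sup-lattice tensor with the hom-set-valued attribute set $\Cxt(\K_1,\K_2^*)$ in the definition of $\lotimes$ on contexts, and checking that the lattice orders (inclusion of closed relations versus the order on $(V_1 \multimap V_2^*)^*$) correspond under the equivalence. Once this identification is in hand, the remainder is routine transport of structure along an equivalence, and one may reasonably cite \cite{mori2008chu} for the $\Cxt$-side $^*$-autonomy already assumed above.
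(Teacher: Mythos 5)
The paper offers no proof of this statement at all --- it is quoted from \cite{mori2008chu} --- so there is no in-paper argument to compare you against; your outline is the natural one and, as far as I can tell, it is sound. Everything does reduce to exhibiting a natural isomorphism $\clat(\K_1 \lotimes \K_2) \cong \clat(\K_1) \lotimes \clat(\K_2)$ compatible with the unit and with $(-)^*$, after which the $^*$-autonomy (already stated separately for both sides earlier in the section) transports along the monoidal, duality-preserving equivalence. You rightly flag that this isomorphism is where all the work lies, but you leave precisely that step as a sketch, and the one point worth making explicit is the order-matching you defer. The extents of $\K_1 \lotimes \K_2$ are the intersections of attribute-extents, i.e.\ intersections of bonds from $\K_1$ to $\K_2^*$ viewed as subsets of $G_1 \times G_2$; since an arbitrary intersection of bonds is again a bond and the full relation is a bond, the extent lattice is exactly the set of such bonds ordered by inclusion, and by Proposition \ref{prop:morphisms-the-same} and Theorem \ref{thm:equiv-cat-level} these biject with $\SupLat(\clat(\K_1), \clat(\K_2)^*)$. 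However, a larger bond corresponds to a \emph{smaller} closed relation $G_1 \to M_2$ and hence to a smaller sup-lattice map, so inclusion of extents is the \emph{opposite} of the pointwise order on $\clat(\K_1) \multimap \clat(\K_2)^*$ --- which is exactly what the outer $(-)^*$ in $V_1 \lotimes V_2 := (V_1 \multimap V_2^*)^*$ compensates for. Spelling out that order reversal, and the naturality of the resulting isomorphism in both arguments, is the genuine content missing from your proposal; the remainder is bookkeeping, and deferring it to \cite{mori2008chu} is exactly what the paper itself does.
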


\subsection{The Concept Tensor} \label{subsec:concept-suplat}

Less well-known is the monoidal structure on $\SupLat$ corresponding to the tensor $\fcotimes$ on $\Cxt$, suggested by Wille. 

\begin{definition} \cite{wille1985tensorial}
For a pair of complete lattices $V_1, V_2$ we define their \emph{concept tensor} to be the complete lattice 
\[
V_1 \fcotimes V_2 := \clat(F(V_1) \times F(V_2))
\]
Explicitly, it is the concept lattice of the context with $G = M = V_1 \times V_2$ and relation $\triangledown$ with $(x,y) \triangledown (w,z)$ whenever $x \leq w$ or $y \leq z$. 
\end{definition}

The tensor $V_1 \fcotimes V_2$ has a representation in terms of closed bi-ideals of $V_1, V_2$. However, it can also be worked with directly by making use of a pair of complete lattice embeddings
\begin{equation} \label{eq:tensor-embeddings}
\begin{tikzcd}
V_1 \arrow[r,hook,"\varepsilon_1"] & V_1 \fcotimes V_2 & V_2 \arrow[l,hook',swap,"\varepsilon_2"]
\end{tikzcd}
\end{equation}
From these we define a pair of \myemph{tensorial operations} 
$\ovee, \owedge \colon V_1 \times V_2 \to V_1 \fcotimes V_2$ 
by
\begin{align*}
x_1 \twedge x_2 &= \varepsilon_1(x_1) \wedge \varepsilon_2(x_2) \\ 
x_1 \tvee x_2 &= \varepsilon_1(x_1) \vee \varepsilon_2(x_2) 
\end{align*}
By construction we have 
\begin{align} \label{eq:eps-from-join}
\varepsilon_1(x_1) &= x_1 \twedge 1 = x_1 \tvee 0 &
\varepsilon_2(x_2) &= 1 \twedge x_2  = 0 \tvee x_2
\end{align}

The operations $\twedge$ and $\tvee$ satisfy a number of axioms that allow one to perform calculations in $V_1 \fcotimes V_2$, see \cite[p.83]{wille1985tensorial}. Most notably, the subsets $\ve(V_1)$ and $\ve(V_2)$ generate $V_1 \fcotimes V_2$ as a complete lattice, and are `mutually distributive', in the following sense.

\begin{definition} \cite{ganter1999formal}
We call a pair of subsets $X$ and $Y$ of a complete lattice \emph{mutually distributive} when for all indexed sets of elements $(x_i)_{i \in I} \subseteq X$ and $(y_i)_{i \in I} \subseteq Y$ we have 
\begin{align*}
\bigvee_{i \in I} (x_i \wedge y_i) 
&= \bigwedge_{J \subseteq I} 
\left(\bigvee_{j \in J} x_j \vee \bigvee_{k \in I \setminus J} y_k \right) \\
\bigwedge_{i \in I} (x_i \vee y_i) 
&= \bigvee_{J \subseteq I} 
\left(\bigwedge_{j \in J} x_j \wedge \bigwedge_{k \in I \setminus J} y_k \right) 
\end{align*}
\end{definition}

Wille has studied $\fcotimes$ as a tensor for lattices which does not favour suprema over infima (or vice versa), characterising it with respect to complete homomorphisms \cite[Theorem 2]{wille1985tensorial}. However, we will now see that this characterisation may be extended to completely join-preserving maps, yielding a monoidal structure on $\SupLat$. The following results are new.

\begin{proposition} \label{prop:tensor-main}
Let $f \colon V_1 \to M$ and $g \colon V_2 \to M$ be complete sup-lattice morphisms and suppose that $f(V_1)$ and $g(V_2)$ are mutually distributive in $M$. Then there exists a unique complete sup-lattice morphism $h \colon V_1 \fcotimes V_2 \to M$ with 
\begin{equation} \label{eq:tensor-h}
h(x \twedge y) = f(x) \wedge g(y)
\end{equation}
for all $x \in V_1, y \in V_2$. Moreover, when $f$ and $g$ are complete lattice morphisms, so is $h$. 
\end{proposition}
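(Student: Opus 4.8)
The plan is to build $h$ on generators and then extend by suprema, using the concept-lattice description of $V_1 \fcotimes V_2$ as $\clat(F(V_1) \times F(V_2))$ together with the universal property already available to us from Theorem~\ref{thm:equiv-cat-level} (equivalently, from the fact that every element of $V_1 \fcotimes V_2$ is a supremum of elements of the form $x \twedge y$). Concretely, every element $c$ of $V_1 \fcotimes V_2$ can be written as $c = \bigvee_{i \in I} (x_i \twedge y_i)$ for some family $(x_i, y_i)_{i \in I}$ in $V_1 \times V_2$, since $\ve(V_1)$ and $\ve(V_2)$ generate the lattice and $\ve_1(x) = x \twedge 1$, $\ve_2(y) = 1 \twedge y$ by \eqref{eq:eps-from-join}. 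So the \emph{only} possible definition of a sup-preserving $h$ satisfying \eqref{eq:tensor-h} is $h(c) := \bigvee_{i \in I} (f(x_i) \wedge g(y_i))$; uniqueness is then immediate once we show this is well-defined.

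First I would prove well-definedness: if $\bigvee_{i \in I}(x_i \twedge y_i) = \bigvee_{j \in J}(x'_j \twedge y'_j)$ in $V_1 \fcotimes V_2$, then $\bigvee_i (f(x_i) \wedge g(y_i)) = \bigvee_j (f(x'_j) \wedge g(y'_j))$ in $M$. This is where mutual distributivity does the work. The key observation is that $\ve(V_1)$ and $\ve(V_2)$ are mutually distributive in $V_1 \fcotimes V_2$ (this is Wille's result, quoted in the paragraph before the Definition of mutual distributivity), while $f(V_1)$ and $g(V_2)$ are assumed mutually distributive in $M$. Mutual distributivity lets one rewrite any supremum of meets $\bigvee_i(x_i \twedge y_i)$ as a meet of joins $\bigwedge_{K \subseteq I}\bigl(\bigvee_{k \in K}\ve_1(x_k) \vee \bigvee_{l \in I \setminus K}\ve_2(y_l)\bigr)$, and since $\ve_1, \ve_2$ are order-embeddings, the order relation between two such expressions reduces to an order relation purely among joins of $\ve_1(x)$'s and $\ve_2(y)$'s in $V_1 \fcotimes V_2$; I claim this reduces further to a condition in $V_1$ and $V_2$ separately (because $\ve_1(x) \leq \ve_1(x') \vee \ve_2(y')$ in the generated lattice forces $x \leq x'$, using the embeddings and the fact that $\ve_1(V_1) \cap \ve_2(V_2) = \{0, 1\}$-type degeneracy governed by the tensorial axioms). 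The very same reduction, applied verbatim in $M$ with $f, g$ in place of $\ve_1, \ve_2$, then yields the matching inequality of $h$-values. So the comparison of two representations of $c$ is transported faithfully from $V_1 \fcotimes V_2$ to $M$, giving well-definedness and hence uniqueness of $h$.

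Next, granting $h$ is well-defined, preservation of arbitrary suprema is routine: a supremum $\bigvee_k c_k$ of elements $c_k = \bigvee_{i \in I_k}(x_i \twedge y_i)$ is represented by the amalgamated family indexed by $\coprod_k I_k$, and $h(\bigvee_k c_k) = \bigvee_{k}\bigvee_{i \in I_k}(f(x_i) \wedge g(y_i)) = \bigvee_k h(c_k)$. Equation \eqref{eq:tensor-h} holds by taking the one-element family. For the final clause, suppose additionally that $f$ and $g$ preserve arbitrary infima (so are complete lattice morphisms). Since $\ve(V_1)$ and $\ve(V_2)$ generate $V_1 \fcotimes V_2$ \emph{and} are mutually distributive, every element is \emph{also} expressible as a meet of joins of generators (dually to the above); applying the mutual-distributivity law in $M$ for $f(V_1), g(V_2)$ shows $h$ commutes with these meets, and a symmetric amalgamation argument gives preservation of arbitrary infima.

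The main obstacle is the well-definedness step, and specifically the claim that order relations between finite/infinite joins-of-meets of generators in $V_1 \fcotimes V_2$ are controlled entirely by the orders in $V_1$ and $V_2$ (via the embeddings $\ve_1, \ve_2$) in a way that survives the substitution $\ve_1 \rightsquigarrow f$, $\ve_2 \rightsquigarrow g$. Making this precise likely requires invoking the specific list of tensorial axioms for $\twedge, \tvee$ from \cite[p.83]{wille1985tensorial} to normalize an arbitrary lattice term in the generators into a canonical meet-of-joins (or join-of-meets) form, and then checking that this normalization uses nothing beyond: the lattice axioms, the embedding property, and mutual distributivity — all of which are equally available for $f(V_1), g(V_2)$ in $M$. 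Everything else is bookkeeping.
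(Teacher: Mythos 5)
There is a genuine gap, and it sits exactly where you locate it: the well-definedness of $h(c) := \bigvee_i \bigl(f(x_i) \wedge g(y_i)\bigr)$ for an arbitrary representation $c = \bigvee_i (x_i \twedge y_i)$. Your proposed resolution is not a proof: you assert that order relations among joins-of-meets of generators ``reduce to a condition in $V_1$ and $V_2$ separately,'' and the parenthetical justification contains a false sub-claim --- $\ve_1(x) \leq \ve_1(x') \vee \ve_2(y')$ does \emph{not} force $x \leq x'$ (take $y' = 1$, so the right-hand side is the top element). The interaction between the two images is precisely the combinatorial content that mutual distributivity encodes, and the reduction you defer to ``invoking the tensorial axioms plus bookkeeping'' is the entire substance of the existence proof; without it nothing has been shown.

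The paper avoids representation-dependence altogether by exploiting the fact that elements of $V_1 \fcotimes V_2$ \emph{are} concepts $(A,B)$ of the context $F(V_1) \times F(V_2)$, i.e.\ come with a canonical closed extent. It imports from the first part of Ganter--Wille's Theorem~37 the identity
\[
\bigvee_{(x,y) \in A} f(x) \wedge g(y) \;=\; \bigwedge_{(w,z) \in A'} f(w) \vee g(z)
\]
valid for \emph{any} $A \subseteq V_1 \times V_2$ when $f(V_1), g(V_2)$ are mutually distributive. Since the right-hand side depends only on the polar $A'$, the left-hand side depends only on $\cls{A}$, hence only on the concept --- this is what makes $h(A,B) := \bigvee_{(x,y)\in A} f(x)\wedge g(y)$ well-defined, and it is exactly the statement your sketch needs but does not establish. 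Your uniqueness argument (sup-density of the $x \twedge y$, which is \eqref{eq:concept-as-wedge}) and your treatment of the complete-morphism case do match the paper in spirit, but note that the paper derives \eqref{eq:concept-as-wedge} itself \emph{from} the already-constructed $h$ with $f = \ve_1$, $g = \ve_2$, so you cannot freely assume it at the outset without an independent argument. To repair your proof, replace the normalization argument by the displayed identity (proved via mutual distributivity in $M$ and the definition of the relation $\triangledown$ on $F(V_1) \times F(V_2)$); everything downstream then goes through.
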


\begin{proof}
The result and proof is similar to \cite[Theorem 37]{ganter1999formal} which, though stated for complete morphisms, in many places only uses preservation of joins. See the Appendix for details.
\end{proof}

As as a consequence we obtain Wille's characterisation of this tensor.

\begin{corollary} \label{cor:unique-compl} \cite[Thm 2]{wille1985tensorial}
For any complete lattice morphisms $f \colon {V_1 \to M}$ and $g \colon V_2 \to M$  whose images $f(V_1)$ and $g(V_2)$ are mutually distributive, there is a unique complete lattice morphism $h \colon V_1 \fcotimes V_2 \to M$ with
\begin{equation} \label{eq:compose-f-g}
h \circ \varepsilon_1 = f \qquad h \circ \varepsilon_2 = g
\end{equation}
\end{corollary}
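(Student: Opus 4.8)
The plan is to deduce this directly from Proposition \ref{prop:tensor-main}, since the Corollary is essentially its restriction to complete lattice morphisms phrased in terms of the embeddings $\varepsilon_1, \varepsilon_2$ rather than the tensorial operation $\twedge$.

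First I would invoke the Proposition. A complete lattice morphism is in particular a complete sup-lattice morphism, and by hypothesis $f(V_1)$ and $g(V_2)$ are mutually distributive in $M$, so Proposition \ref{prop:tensor-main} yields a unique complete sup-lattice morphism $h \colon V_1 \fcotimes V_2 \to M$ with $h(x \twedge y) = f(x) \wedge g(y)$ for all $x \in V_1$, $y \in V_2$; and because $f$ and $g$ are in fact complete lattice morphisms, the last clause of that proposition guarantees $h$ is one too.

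Next I would check the factorisation \eqref{eq:compose-f-g}. Using \eqref{eq:eps-from-join}, for $x \in V_1$ we have $h(\varepsilon_1(x)) = h(x \twedge 1) = f(x) \wedge g(1) = f(x) \wedge 1 = f(x)$, where $g(1) = 1$ since $g$, being a complete lattice morphism, preserves all meets and in particular the empty one. The identity $h \circ \varepsilon_2 = g$ is entirely symmetric, using $\varepsilon_2(x) = 1 \twedge x$ and $f(1) = 1$. For uniqueness, suppose $h'$ is any complete lattice morphism with $h' \circ \varepsilon_1 = f$ and $h' \circ \varepsilon_2 = g$; then for all $x, y$ we get $h'(x \twedge y) = h'(\varepsilon_1(x) \wedge \varepsilon_2(y)) = h'(\varepsilon_1(x)) \wedge h'(\varepsilon_2(y)) = f(x) \wedge g(y)$, so $h'$ is a complete sup-lattice morphism satisfying \eqref{eq:tensor-h}, and hence $h' = h$ by the uniqueness clause of Proposition \ref{prop:tensor-main}.

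I do not expect any real obstacle here: all the substantive work lies in Proposition \ref{prop:tensor-main}, and the only subtlety is the trivial observation that a complete lattice morphism preserves the top element $1$, which is what lets one pass between the formulation via $\twedge$ and the formulation via $\varepsilon_1, \varepsilon_2$. (One could even note that uniqueness is already immediate from the uniqueness in Proposition \ref{prop:tensor-main}, so strictly only existence of $h$ and the two equations \eqref{eq:compose-f-g} require verification.)
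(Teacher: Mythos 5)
Your proposal is correct and follows the same route as the paper, which simply presents the corollary as an immediate consequence of Proposition \ref{prop:tensor-main}; you have just made the implicit details explicit (translating between $\twedge$ and $\varepsilon_1,\varepsilon_2$ via \eqref{eq:eps-from-join} and the fact that complete lattice morphisms preserve $1$, and noting that any $h'$ satisfying \eqref{eq:compose-f-g} satisfies \eqref{eq:tensor-h} and hence coincides with $h$ by the uniqueness clause). No gaps.
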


We can also now make this tensor into a bifunctor, thanks to the following.

\begin{lemma} \label{lem:main-tensor-result}
For any complete sup-lattice morphisms $f \colon V_1 \to W_1$ and $g \colon V_2 \to W_2$ there is a unique such morphism
\[
(f \fcotimes g) \colon V_1 \fcotimes V_2 \to W_1 \fcotimes W_2
\] 
satisfying 
\[
(f \fcotimes g)(x \twedge y) = f(x) \twedge g(y)
\]
for all $(x,y) \in V_1 \times V_2$. If $f$ and $g$ are complete lattice morphisms, so is $f \fcotimes g$. 
\end{lemma}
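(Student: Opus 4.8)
The plan is to build $f \fcotimes g$ by applying Proposition~\ref{prop:tensor-main} to a well-chosen pair of maps into $M := W_1 \fcotimes W_2$. Concretely, I would set $f' := \varepsilon_1 \circ f \colon V_1 \to W_1 \fcotimes W_2$ and $g' := \varepsilon_2 \circ g \colon V_2 \to W_1 \fcotimes W_2$, both of which are complete sup-lattice morphisms since $\varepsilon_1, \varepsilon_2$ are complete lattice embeddings (and composites of sup-preserving maps are sup-preserving). To invoke Proposition~\ref{prop:tensor-main} I must check that the images $f'(V_1) \subseteq \varepsilon_1(W_1)$ and $g'(V_2) \subseteq \varepsilon_2(W_2)$ are mutually distributive in $W_1 \fcotimes W_2$. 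This is where I would lean on Wille's axioms for $\fcotimes$: since $\varepsilon_1(W_1)$ and $\varepsilon_2(W_2)$ are themselves mutually distributive in $W_1 \fcotimes W_2$ (stated in the text just before the definition of mutual distributivity), and mutual distributivity is clearly inherited by subsets, the pair $(f'(V_1), g'(V_2))$ is mutually distributive as well. Proposition~\ref{prop:tensor-main} then yields a unique complete sup-lattice morphism $h \colon V_1 \fcotimes V_2 \to W_1 \fcotimes W_2$ with $h(x \twedge y) = f'(x) \wedge g'(y) = \varepsilon_1(f(x)) \wedge \varepsilon_2(g(y)) = f(x) \twedge g(y)$, which is exactly the desired $f \fcotimes g$.

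For uniqueness of $f \fcotimes g$ as a complete sup-lattice morphism satisfying the stated equation, I would argue that the elements $x \twedge y$ for $(x,y) \in V_1 \times V_2$ join-generate $V_1 \fcotimes V_2$: indeed $\varepsilon_1(V_1)$ and $\varepsilon_2(V_2)$ generate $V_1 \fcotimes V_2$ as a complete lattice (again from the cited properties of $\fcotimes$), and via the mutual-distributivity identities every element is expressible as a join of meets of the form $\bigvee_j \varepsilon_1(x_j) \wedge \bigwedge$-type combinations — more simply, by \eqref{eq:eps-from-join} each generator $\varepsilon_1(x) = x \twedge 1$ and $\varepsilon_2(y) = 1 \twedge y$ is already of the form $x \twedge y$, and the mutual distributivity of $\varepsilon_1(V_1), \varepsilon_2(V_2)$ expresses arbitrary meets $\bigwedge_i(\varepsilon_1(x_i) \vee \varepsilon_2(y_i))$ as joins of elements $\bigwedge_{j} \varepsilon_1(x_j) \wedge \bigwedge_k \varepsilon_2(y_k)$, and a finite (or arbitrary) meet $\varepsilon_1(x) \wedge \varepsilon_2(y) = x \twedge y$; pushing this through shows the $x \twedge y$ join-generate. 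Hence any two sup-preserving maps agreeing on all $x \twedge y$ agree everywhere. (Alternatively, uniqueness is immediate from the uniqueness clause of Proposition~\ref{prop:tensor-main} once we observe $f' , g'$ are determined by $f,g$.)

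Finally, for the last sentence: if $f$ and $g$ are complete lattice morphisms (preserving both arbitrary joins and meets), then so are $\varepsilon_1 \circ f$ and $\varepsilon_2 \circ g$ since $\varepsilon_1, \varepsilon_2$ are complete lattice embeddings; the "moreover" clause of Proposition~\ref{prop:tensor-main} then gives that $h = f \fcotimes g$ is a complete lattice morphism too.

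I expect the main obstacle to be the mutual-distributivity verification and, relatedly, the precise argument that the $x \twedge y$ join-generate $V_1 \fcotimes V_2$ — these rely on unpacking Wille's axioms for $\fcotimes$ (the reference \cite[p.83]{wille1985tensorial}) rather than on anything formal in the present excerpt, so some care is needed to cite exactly the right facts. Everything else (functoriality of $\fcotimes$ on morphisms, i.e. $(f_2 \fcotimes g_2) \circ (f_1 \fcotimes g_1) = (f_2 \circ f_1) \fcotimes (g_2 \circ g_1)$ and $\id{} \fcotimes \id{} = \id{}$, which presumably follows immediately) is routine bookkeeping using the uniqueness clause, so I would relegate it to a remark or to the surrounding development rather than the proof of this lemma itself.
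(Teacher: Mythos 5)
Your proposal is correct and is essentially the paper's own proof, which consists precisely of applying Proposition~\ref{prop:tensor-main} to $\varepsilon_1 \circ f$ and $\varepsilon_2 \circ g$ with $M = W_1 \fcotimes W_2$. Your explicit check that the images lie in $\varepsilon_1(W_1)$ and $\varepsilon_2(W_2)$ and hence inherit mutual distributivity, and your observation that uniqueness follows directly from the uniqueness clause of Proposition~\ref{prop:tensor-main}, are exactly the details the paper leaves implicit.
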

\begin{proof}
Apply Proposition \ref{prop:tensor-main} to $\varepsilon_1 \circ f $ and $\varepsilon_2 \circ g$, with $M = W_1 \fcotimes W_2$.
\end{proof}

We are now ready to establish the following. We equip $\SupLat$ with discarding morphisms with $\discard{V} \colon V \to \TV$ sending $x$ to $1$ iff $x \neq 0$. Let us also write $\CompLat$ for the wide subcategory of $\SupLat$ given by the completely join and meet preserving maps.


\begin{theorem} \label{thm:fctensor-SMC}
$(\SupLat, \fcotimes, \TV)$ is a symmetric monoidal category with discarding, with $\CompLat$ as a symmetric monoidal subcategory. Moreover $(\clat, F)$ provide a symmetric monoidal equivalence 
\begin{equation} \label{eq:new-mon-equiv}
(\Cxt, \fco, \Ictx) \simeq (\SupLat, \fco, \TV)
\end{equation}
which preserves discarding. 
\end{theorem}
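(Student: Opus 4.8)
The plan is to transport the symmetric monoidal structure with discarding from $(\Cxt,\fcotimes,\Ictx)$ (Theorem~\ref{thm:fc-SMC-cxt}, Proposition~\ref{prop:further-structure}) along the equivalence $\clat\colon\Cxt\xrightarrow{\simeq}\SupLat$ of Theorem~\ref{thm:equiv-cat-level}, whose pseudo-inverse is $F$. I would begin by recalling the standard fact that an equivalence of categories transports any symmetric monoidal structure: after fixing natural isomorphisms $\clat\circ F\cong\id{\SupLat}$ and $F\circ\clat\cong\id{\Cxt}$, the assignments $V\otimes W:=\clat(F(V)\fcotimes F(W))$ and $I:=\clat(\Ictx)$, together with the transported associator, unitors, symmetry and discarding, make $(\SupLat,\otimes,I)$ a symmetric monoidal category with discarding for which $(\clat,F)$ is a symmetric monoidal equivalence preserving discarding. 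The substance of the proof is then to identify this transported structure with the concrete data $\fcotimes$, $\TV$, $\discard{(-)}$ and $\CompLat$ of the statement.

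On objects the identification is immediate: by the definition of $\fcotimes$ on $\Cxt$ the context $F(V)\fcotimes F(W)$ is exactly Wille's direct product $F(V)\times F(W)$, so $\clat(F(V)\fcotimes F(W))=\clat(F(V)\times F(W))=V\fcotimes W$; and $\clat(\Ictx)\simeq\pset(\{\star\})\cong\TV$ by Example~\ref{ex:SXA}, so we may take $I=\TV$. For the bifunctor, the transported action sends $(f,g)$ to $\clat(F(f)\fcotimes F(g))$; identifying each embedding $\varepsilon_i\colon V_i\to V_1\fcotimes V_2$ with $\clat$ applied to the evident context morphism $F(V_i)\to F(V_1)\fcotimes F(V_2)$ and unwinding \eqref{eq:suplatfuncdef} and \eqref{eq:fc-tensor-Chu}, one checks that $\clat(F(f)\fcotimes F(g))(x\twedge y)=f(x)\twedge g(y)$, so the uniqueness clause of Lemma~\ref{lem:main-tensor-result} forces $\clat(F(f)\fcotimes F(g))=f\fcotimes g$. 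In the same way the transported associator, unitors and symmetry act on the generators $x\twedge y$ exactly as the coherence maps of $\Rel$ act on tuples (compare the formulas for $\alpha,\rho,\sigma$ on $\Cxt$), so they coincide with the evident maps, their coherence equations being inherited from Theorem~\ref{thm:fc-SMC-cxt}.

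That $\CompLat$ is a symmetric monoidal subcategory is then quick: it is wide, closed under $\fcotimes$ by the last sentence of Lemma~\ref{lem:main-tensor-result}, contains $\TV$, and contains the coherence isomorphisms, since these are isomorphisms of complete lattices and hence preserve all meets and joins. For discarding, set $\discard{V}:=\clat(\discard{F(V)})$; using \eqref{eq:suplatfuncdef} and the fact that the relation $R\colon V\to\{\star\}$ underlying $\discard{F(V)}\colon F(V)\to\Ictx$ sends $0\mapsto\emptyset$ and $v\mapsto\{\star\}$ for $v\neq 0$, one computes that $\discard{V}$ is precisely the map sending $x$ to $1$ iff $x\neq 0$, as claimed; the identities $\discard{\TV}=\id{\TV}$ and $\discard{V\fcotimes W}=\rho\circ(\discard{V}\fcotimes\discard{W})$ are then inherited from Proposition~\ref{prop:further-structure}(\ref{enum:SM-disc}) via the strong monoidality of $\clat$.

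I expect the main obstacle to be the bookkeeping of the second paragraph: pinning down the embeddings $\varepsilon_i$ in terms of $\Cxt$, and then matching the abstractly transported bifunctor and coherence isomorphisms with the explicitly defined $f\fcotimes g$, $\twedge$, and evident coherence maps. Each identification is forced by a uniqueness statement (Lemma~\ref{lem:main-tensor-result}, Corollary~\ref{cor:unique-compl}), so it reduces to evaluating transported sup-morphisms on elements $x\twedge y$, with all the care residing in getting $\varepsilon_i$ right. A more laborious alternative bypasses transport, verifying the symmetric monoidal axioms directly in $\SupLat$ via the universal properties and thereby reducing each coherence diagram to an identity of sup-morphisms, hence --- since the $\varepsilon_i(V_i)$ generate $V_1\fcotimes V_2$ --- to an identity on the generators $x\twedge y$.
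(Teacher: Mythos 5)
Your proposal is correct, but it takes a genuinely different route from the paper's proof. The paper never transports structure: it builds the symmetric monoidal structure directly inside $\SupLat$, defining $\alpha$, $\rho$, $\sigma$ as the unique complete homomorphisms determined by their behaviour on the embeddings $\ve_i$ (via Corollary \ref{cor:unique-compl} and Lemma \ref{lem:main-tensor-result}), and verifying isomorphism, naturality and the coherence equations by evaluating on the sup-dense generators $x \twedge y$; only afterwards does it establish the monoidal equivalence, by exhibiting the Ganter--Wille isomorphism $\phi_{\K_1,\K_2} \colon \clat(\K_1)\fcotimes\clat(\K_2)\to\clat(\K_1\fcotimes\K_2)$ and checking its naturality on the same generators. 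Your transport argument instead obtains the coherence axioms, the symmetry, the discarding equations and the monoidal equivalence all at once from Theorem \ref{thm:fc-SMC-cxt} and Proposition \ref{prop:further-structure}, which is a real economy; the price is that the transported bifunctor and coherence maps are composites involving the natural isomorphism $F\clat \cong \id{\Cxt}$ (for instance the transported $(U\fcotimes V)\fcotimes W$ is $\clat(F(\clat(F(U)\fcotimes F(V)))\fcotimes F(W))$, with a spurious $F\clat$ inside), so identifying them with the concrete $f\fcotimes g$, $\alpha$, $\rho$, $\sigma$ by uniqueness-on-generators is essentially the same computation the paper performs when proving naturality of $\phi$, just relocated. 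Two minor points: your observation that the coherence isomorphisms automatically lie in $\CompLat$ because every isomorphism of $\SupLat$ is an order isomorphism and hence preserves all meets is a slicker justification than the paper's appeal to their construction; and the discarding axiom reads $\discard{A \otimes B} = \lambda \circ (\discard{A} \otimes \discard{B})$ with $\lambda$ rather than $\rho$, though this is immaterial.
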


A consequence is the following, which we could have verified directly.

\begin{corollary}
The self-duality $(-)^* \colon (\SupLat, \fcotimes)^\op \simeq (\SupLat, \fcotimes)$ is strong monoidal. In particular $(V \fcotimes W)^* \simeq V^* \fco W^*$ for all complete lattices $V, W$.
\end{corollary}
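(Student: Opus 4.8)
The plan is to deduce the corollary from the symmetric monoidal equivalence \eqref{eq:new-mon-equiv} established in Theorem \ref{thm:fctensor-SMC} together with the fact, already recorded in Proposition \ref{prop:further-structure}\ref{enum:tens}, that the self-duality $(-)^*$ on $\Cxt$ is strong monoidal with respect to $\fco$, satisfying $(\K_1 \fco \K_2)^* = \K_1^* \fco \K_2^*$ on the nose. Since $\clat(-)$ intertwines the two self-dualities --- recall from Theorem \ref{thm:equiv-cat-level} that $\clat$ is $(-)^*$-preserving, i.e. $\clat(\K^*) \simeq \clat(\K)^*$ naturally --- strong monoidality of $(-)^*$ on $\Cxt$ transports across the equivalence to strong monoidality of $(-)^*$ on $\SupLat$.

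Concretely, I would argue as follows. First observe that for any complete lattice $V$ we have $V \simeq \clat(F(V))$ and $F(V)^* \simeq F(V^*)$ (both are immediate from Example \ref{ex:DM} and the definition of the dual context, since $F(V) = (V,V,\leq)$ and $F(V)^* = (V,V,\geq) = F(V^*)$). Now compute, for complete lattices $V,W$, using Proposition \ref{prop:further-structure}\ref{enum:tens} and the fact that $\clat$ is both strong monoidal for $\fco$ (Theorem \ref{thm:fctensor-SMC}) and $(-)^*$-preserving (Theorem \ref{thm:equiv-cat-level}):
\[
(V \fco W)^* \simeq \clat\bigl(F(V) \fco F(W)\bigr)^* \simeq \clat\bigl((F(V) \fco F(W))^*\bigr) \simeq \clat\bigl(F(V)^* \fco F(W)^*\bigr) \simeq \clat(F(V)^*) \fco \clat(F(W)^*) \simeq V^* \fco W^*.
\]
It then remains to check that this isomorphism is natural in $V$ and $W$ and is compatible with the associator, unitors and symmetry --- but this is exactly the statement that $(-)^* \colon (\SupLat,\fco)^\op \to (\SupLat,\fco)$ is a strong monoidal functor, and all of this structure is transported along the monoidal equivalence $(\clat,F)$ from the corresponding (known) structure on $\Cxt$, so no new coherence verification is needed beyond what Theorem \ref{thm:fctensor-SMC} and Proposition \ref{prop:further-structure} already provide.

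Alternatively --- and this is the "direct" verification alluded to in the statement --- one can bypass $\Cxt$ entirely and build the comparison $2$-cell $\psi_{V,W} \colon V^* \fco W^* \to (V \fco W)^*$ by hand: apply Lemma \ref{lem:main-tensor-result} to the adjoints $\varepsilon_1^* \colon (V\fco W)^* \to V^*$ and $\varepsilon_2^* \colon (V \fco W)^* \to W^*$, or dually characterise $(V \fco W)^*$ by its universal property against mutually distributive pairs of sup-maps out of $V^*$ and $W^*$, using that $\ve(V), \ve(W)$ mutually distributive in $V \fco W$ implies their images are mutually distributive in the opposite lattice (mutual distributivity is self-dual under swapping the two subsets, by inspection of the two displayed identities in the definition). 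The main obstacle in this second route is the bookkeeping needed to identify the image subsets and verify the coherence diagrams, which is precisely why routing through the already-established equivalence \eqref{eq:new-mon-equiv} and Proposition \ref{prop:further-structure} is the cleaner path; I expect the write-up to take that route, treating the naturality and coherence of $\psi$ as inherited from the $\Cxt$ side.
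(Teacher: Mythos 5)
Your proposal is correct and matches the paper's (implicit) argument: the paper states this corollary as a direct consequence of the monoidal equivalence \eqref{eq:new-mon-equiv}, the identity $(\K_1 \fco \K_2)^* = \K_1^* \fco \K_2^*$ from Proposition \ref{prop:further-structure}, and the $(-)^*$-preservation of $\clat$ from Theorem \ref{thm:equiv-cat-level}, exactly as in your chain of isomorphisms. Your sketch of the alternative direct verification corresponds to the paper's remark that the result ``could have [been] verified directly,'' so no gap here.
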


\section{Outlook: Applications} \label{sec:applications}

We close by briefly discussing potential applications of the category $\Cxt$ as a compositional model of natural language meaning, which we hope to expand on in future work. 

Before considering each of our tensors, we note that $\Cxt$ itself naturally models order relations on words. In detail, following \cite{coecke2010mathematical}, we choose a formal context $\K$ to represent each basic word type, e.g. nouns. The semantics of a noun $w$ is then a state $\sem{w} \colon \Ictx \to \K$, which by Lemma \ref{lem:states-are-concepts} corresponds to a concept of $\K$. The ordering on concepts allows one to capture entailment, as is treated using density matrices in \cite{balkir2015distributional}.

\subsection{DisCo in Closed Categories}
The DisCo framework is typically applied to autonomous (i.e. rigid) monoidal categories $\catC$, in which each object comes with a dual object with a `cup' and `cap' \cite{coecke2010mathematical}. Any pregroup forms such a category, allowing one to interpret pregroup grammars in $\catC$. 

Though neither of our monoidal structures on $\Cxt$ is autonomous, a known result due to Lambek tells us that in fact a simpler structure than pregroups is required in practice.

\begin{definition}\cite{lambek1997type} 
A \emph{protogroup} is a partially ordered monoid $(P, \leq, \cdot)$ such that for every $a \in P$ there are chosen elements $a^l, a^r \in P$ with 
\begin{equation} \label{eq:protogroup}
a^l \cdot a \leq 1 \qquad a \cdot a^r \leq 1 \qquad \text{(contractions)}
\end{equation}
$P$ is a \emph{pregroup} when additionally for all $a \in A$ we  have 
\begin{equation} \label{eq:pregroup}
1 \leq a \cdot a^l \qquad 1 \leq a^r \cdot a  \qquad \text{(expansions)}
\end{equation}
\end{definition}

\begin{lemma}[Switching Lemma, \cite{lambek2008word}]
For any terms $t, t'$ in the free pregroup $\mathcal{P}_B$ generated by basic types $B$, if $t \leq t'$ then there exists $t'' \in \mathcal{P}_B$ such that $t \leq t''$ without expansions and $t'' \leq t'$ without contractions. 
\end{lemma}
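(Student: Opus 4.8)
The plan is to pass to the standard concrete description of the free pregroup $\mathcal{P}_B$, in which elements are strings of \emph{simple types} $a^{(n)}$ with $a \in B$ and $n \in \mathbb{Z}$, and $t \leq t'$ holds exactly when $t$ can be turned into $t'$ by a finite sequence of elementary rewrites of three kinds: a \emph{contraction} deleting an adjacent factor $a^{(n)} a^{(n+1)}$; an \emph{expansion} inserting an adjacent factor $a^{(n+1)} a^{(n)}$; and an \emph{induced step} replacing one simple type $a^{(n)}$ by $b^{(n)}$ when $a \leq b$ in $B$ for $n$ even, or $b \leq a$ for $n$ odd (see \cite{lambek2008word}). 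Call a chain of rewrites a \emph{reduction} ($t \leq_{\downarrow} t'$) if it uses only contractions and induced steps, and an \emph{expansion chain} ($t \leq_{\uparrow} t'$) if it uses only expansions and induced steps. Both $\leq_{\downarrow}$ and $\leq_{\uparrow}$ are reflexive and transitive and contained in $\leq$, and since each elementary step lies in one of them, $\leq$ is the reflexive--transitive closure of their union. With this vocabulary the Switching Lemma says precisely: whenever $t \leq t'$ there is $t''$ with $t \leq_{\downarrow} t''$ and $t'' \leq_{\uparrow} t'$; equivalently, every alternating zig-zag of $\leq_{\downarrow}$'s and $\leq_{\uparrow}$'s can be rearranged to ``all down, then all up''.

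The heart of the argument is a local commutation: if $t \leq_{\uparrow} s$ and $s \leq_{\downarrow} t'$ then there is $s'$ with $t \leq_{\downarrow} s'$ and $s' \leq_{\uparrow} t'$. I would prove this by induction on the number of elementary steps, which reduces it to a single expansion-chain step followed by a single reduction step, $t \to s \to t'$. If either of the two steps is an induced step, then --- since induced steps count both as reductions and as expansion-chain steps --- the two steps together already form a $\leq_{\uparrow}$ or a $\leq_{\downarrow}$, and we are done taking $s' = t$ or $s' = t'$. So the only genuine case is an expansion inserting a factor $a^{(n+1)} a^{(n)}$ at some gap, followed by a contraction. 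Since $a^{(n+1)} a^{(n)}$ is itself not a contractible factor, the pair deleted by the contraction is either disjoint from the two inserted symbols --- in which case that contraction was already available in $t$, and performing it first and re-inserting afterwards exhibits $t \leq_{\downarrow} s' \leq_{\uparrow} t'$ --- or it consists of one inserted symbol together with its old neighbour, and a short direct check shows the contraction simply undoes the expansion, so $t = t'$. Either way we obtain the required factorisation.

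With local commutation in hand, I would ``bubble'' down-steps leftward: given any derivation of $t \leq t'$, regard it as blocks of $\leq_{\downarrow}$'s and $\leq_{\uparrow}$'s, repeatedly replace an adjacent pair (expansion block, reduction block) by (reduction block, expansion block) via the commutation, and merge neighbouring blocks of the same type using transitivity; an induction on the number of blocks shows the process ends with a single $\leq_{\downarrow}$ followed by a single $\leq_{\uparrow}$, which yields the desired $t''$.

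I expect the main obstacle to be the local-commutation step, and in particular the disciplined treatment of induced steps under the parity convention: one has to check that an induced step which happens to act on a symbol of a just-inserted factor is always absorbable into the $\leq_{\uparrow}$ part, which is precisely what licenses placing induced steps in both $\leq_{\downarrow}$ and $\leq_{\uparrow}$. A cosmetically different but essentially equivalent approach, closer to Lambek's original, is to draw a derivation as a planar diagram of contraction links and expansion links and argue that the diagram can be deformed so that every contraction link lies below every expansion link; the geometry is suggestive, but justifying the admissible moves needs the same case analysis.
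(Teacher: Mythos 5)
The paper offers no proof of this lemma at all: it is imported verbatim from Lambek, so there is no in-paper argument to compare yours against. Your proposal is essentially the standard proof from the rewriting-theoretic literature (and close in spirit to Lambek's own induction on the length of a derivation): present the free pregroup by strings of simple types with contraction, expansion and induced rewrites, prove a local commutation swapping a single expansion past a single contraction, and then normalise an arbitrary derivation by bubbling. The skeleton is sound: an insertion never creates a new adjacency between old symbols, so a subsequent contraction either acts on a pair already adjacent in $t$ (and commutes past the expansion) or overlaps the inserted pair in exactly one symbol (the inserted pair $a^{(n+1)}a^{(n)}$ itself never being contractible). The one place you are too quick is the overlap case when the basic types $B$ carry a nontrivial partial order, which is the setting of \cite{lambek2008word}: there the expansion inserts $a^{(n+1)}b^{(n)}$ and the contraction removes $c^{(n)}a^{(n+1)}$ (or $b^{(n)}c^{(n+1)}$) subject to order side-conditions, and the composite is not the identity $t = t'$ but a single induced step $c^{(n)} \to b^{(n)}$ obtained by composing the two side-conditions; this is harmless, since induced steps belong to both $\leq_{\downarrow}$ and $\leq_{\uparrow}$, but the parity bookkeeping you flag as "the main obstacle" is exactly this check and should be written out rather than deferred. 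Over a discrete $B$ your case analysis is already complete.
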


In particular each inequality $t \leq s$ for the sentence type $s$, determining that a phrase of type $t$ is a valid sentence, requires contractions only, e.g.
\[
\scalebox{1.0}{\begin{tikzpicture}
	\begin{pgfonlayer}{nodelayer}
		\node [style=none] (1) at (-1, 0.5) {};
		\node [style=none] (2) at (1, 0.5) {};
		\node [style=none] (3) at (-1, 0.5) {};
		\node [style=none] (4) at (1, 0.5) {};
		\node [style=label] (5) at (-1.75, 0) {$N$};
		\node [style=label] (6) at (2, -1.25) {$S$};
		\node [style=wide copoint] (8) at (-1, 1) {Alice};
		\node [style=none] (10) at (2, -0.75) {};
		\node [style=none] (12) at (2, 0.5) {};
		\node [style=label] (15) at (1.5, 0) {$N^r$};
		\node [style=wide copoint] (16) at (2, 1) {likes};
		\node [style=none] (17) at (3, 0.5) {};
		\node [style=none] (18) at (5, 0.5) {};
		\node [style=none] (19) at (3, 0.5) {};
		\node [style=none] (20) at (5, 0.5) {};
		\node [style=label] (21) at (2.5, 0) {$N^l$};
		\node [style=label] (22) at (5.5, 0) {$N$};
		\node [style=wide copoint] (23) at (5, 1) {Bob};
	\end{pgfonlayer}
	\begin{pgfonlayer}{edgelayer}
		\draw (1.center) to (3.center);
		\draw (2.center) to (4.center);
		\draw [bend right=90, looseness=2.00] (1.center) to (2.center);
		\draw (10.center) to (12.center);
		\draw (17.center) to (19.center);
		\draw (18.center) to (20.center);
		\draw [bend right=90, looseness=2.00] (17.center) to (18.center);
	\end{pgfonlayer}
\end{tikzpicture}
}
\]
Hence for such applications protogroup grammars are sufficient, requiring our category to only have `cups' (and not `caps'). 

Any closed symmetric monoidal category $(\catC, \otimes, \multimap)$ can model protogroup grammars as follows. For each object $A$ we set $A^l = A^r = A^* := (A \multimap I)$, and we interpret $a^l \cdot a \leq 1$ as the canonical morphism
$A^* \otimes A \to I$ given by 
\begin{equation} \label{eq:cap-is-eval}
\scalebox{1.0}{\begin{tikzpicture}
	\begin{pgfonlayer}{nodelayer}
		\node [style=none] (1) at (-1, 0.25) {};
		\node [style=none] (2) at (1, 0.25) {};
		\node [style=none] (3) at (-1, 0.5) {};
		\node [style=none] (4) at (1, 0.5) {};
		\node [style=label] (5) at (-1, 1) {$A^*$};
		\node [style=label] (6) at (1, 1) {$A$};
		\node [style=none] (7) at (3, 0) {$:=$};
		\node [style=wide point] (8) at (5.75, -0.5) {$\mathsf{eval}$};
		\node [style=none] (9) at (4.75, 0) {};
		\node [style=none] (10) at (6.75, 0) {};
		\node [style=none] (11) at (4.75, 0.75) {};
		\node [style=none] (12) at (6.75, 0.75) {};
		\node [style=label] (13) at (4.75, 1.25) {$A \multimap I$};
		\node [style=label] (14) at (6.75, 1.25) {$A$};
	\end{pgfonlayer}
	\begin{pgfonlayer}{edgelayer}
		\draw (1.center) to (3.center);
		\draw (2.center) to (4.center);
		\draw [bend right=90, looseness=2.00] (1.center) to (2.center);
		\draw (9.center) to (11.center);
		\draw (10.center) to (12.center);
	\end{pgfonlayer}
\end{tikzpicture}
}
\end{equation}
and $a \cdot a^r \leq 1$ by applying the symmetry to the above.

\subsection{The Lattice Tensor}
Since the category $(\Cxt, \lotimes)$ is *-autonomous, it is in particular closed. Hence as above it can model protogroup grammars and all the sentence-parsing aspects of the DisCo framework. We may use the cups defined in \eqref{eq:cap-is-eval} to interpret any valid sentence as a single concept.

\subsection{The Concept Tensor}
In contrast $(\Cxt, \fcotimes)$ is merely a symmetric monoidal category, with no apparent canonical cups or caps. However, Antonin Delpeuch has shown how the DisCo framework may be extended even to bare monoidal categories, thanks to the following result. 

\begin{theorem} \cite{delpeuch2014autonomization}
For any monoidal category $\catC$ there is a free autonomous category $L(\catC)$ with a strong monoidal full and faithful embedding $\catC \hookrightarrow L(\catC)$. 
\end{theorem}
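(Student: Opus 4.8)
The statement is Delpeuch's autonomization theorem \cite{delpeuch2014autonomization}, and the plan is to construct $L(\catC)$ explicitly by freely adjoining left and right duals to every object of $\catC$ and then verifying the relevant universal property. First I would define the objects of $L(\catC)$ to be finite words $X_1 \cdots X_k$ in the alphabet $\{A^{(n)} \mid A \in \mathrm{ob}(\catC),\ n \in \mathbb{Z}\}$, where the symbol $A^{(n)}$ stands for the $n$-fold iterated dual of $A$ (right duals for $n>0$, left duals for $n<0$, with $A^{(0)}=A$). The tensor product is concatenation of words and the unit $I$ is the empty word; the formal left and right dual operations act on a word by reversing it and decrementing, respectively incrementing, every exponent. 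This forces $(X^r)^l = X = (X^l)^r$ and $(X \otimes Y)^r = Y^r \otimes X^r$ at the level of objects, exactly as required of an autonomous category.

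The crux is the definition of the hom-sets. I would present $L(\catC)$ by generators and relations, taking as generating morphisms (i) each $f \colon A \to B$ of $\catC$, regarded as a map $A^{(0)} \to B^{(0)}$, and (ii) for each object $X$ a coevaluation $\eta_X \colon I \to X \otimes X^r$ and an evaluation $\varepsilon_X \colon X^r \otimes X \to I$, together with their left-handed counterparts. Morphisms of $L(\catC)$ are then string diagrams built from these under $\otimes$ and $\circ$, quotiented by the monoidal-category axioms, functoriality of the generators (i), naturality of the cups and caps, and the snake (triangle) equations relating $\eta$ and $\varepsilon$. Diagrammatically, a morphism is a planar diagram in which the dual legs are joined by non-crossing cups and caps while the remaining $\catC$-content sits in the middle.

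The main work, and the main obstacle, is a normal-form theorem guaranteeing that this quotient does not collapse too much. Using the snake equations to remove all zig-zags and the naturality equations to slide every $\catC$-morphism past the cups and caps, I would show that each diagram rewrites to a canonical form consisting of a non-crossing matching of its boundary legs into dual pairs together with a single residual morphism of $\catC$ on the unmatched legs, and that this form is unique, i.e. that the rewriting system is confluent and terminating. Restricting to degree-$0$ objects, the only admissible matching is the empty one, so $L(\catC)(A^{(0)}, B^{(0)}) \cong \catC(A,B)$ naturally; this is precisely fullness and faithfulness of the embedding $A \mapsto A^{(0)}$, $f \mapsto f$.

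It then remains to read off the autonomous structure and the monoidal comparison. Each object $X$ has $X^l$ and $X^r$ as defined above, and the generating cups and caps satisfy the triangle identities by construction, so $L(\catC)$ is autonomous. The embedding is strong monoidal with comparison isomorphisms $A^{(0)} \otimes B^{(0)} \cong (A \otimes_\catC B)^{(0)}$ and $I \cong (I_\catC)^{(0)}$ induced by the monoidal structure of $\catC$ and its coherence; these I would include among the generators and check to be invertible using the normal form. Finally, to justify the word ``free'' I would verify the universal property: given any autonomous $\mathcal{D}$ and any strong monoidal functor $\catC \to \mathcal{D}$, sending the generators $\eta_X, \varepsilon_X$ to the chosen duality data of $\mathcal{D}$ determines an essentially unique strong monoidal, autonomy-preserving extension $L(\catC) \to \mathcal{D}$, exhibiting $L$ as left adjoint (in the appropriate bicategorical sense) to the forgetful functor from autonomous to monoidal categories.
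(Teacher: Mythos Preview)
The paper does not prove this theorem at all: it is quoted from \cite{delpeuch2014autonomization} and used as a black box, with no argument supplied beyond the citation. So there is no ``paper's own proof'' to compare your proposal against.

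That said, your sketch is essentially Delpeuch's construction in the cited reference: objects are signed words in $\mathrm{ob}(\catC)$, morphisms are string diagrams generated by the morphisms of $\catC$ together with formal cups and caps modulo the snake equations, and the key step is a normal-form/coherence result ensuring the embedding is full and faithful. One point to be careful about is the strong monoidal comparison: in Delpeuch's setup the embedding is strict on objects (concatenation of length-one words versus $A\otimes_\catC B$), and the nontrivial content lies in showing that the $\catC$-coherence data survive the quotient, which your normal-form argument handles. Your outline is a faithful summary of that approach, but for the purposes of this paper no proof is expected.
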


Since the embedding is full, this means that any interpretation of a sentence as a state $I \to S$ involving formal cups and caps can be re-arranged to a valid morphism in $\catC$, and so be rewritten without them. 

In future it would be desirable to fully explore the usefulness of both tensors on $\Cxt$ when modelling sentence meanings. 

\bibliographystyle{alpha}
\bibliography{FCA-bib}

\appendix 

\section{Proofs} \label{sec:appendix}

\begin{proof}[Proof of Lemma \ref{lem:Moshier}]
The morphisms $\K_1 \to \K_2$ in \cite{jipsen2012categories} are relations $B \subseteq G_1 \times M_2$ for which $C = B^\dagger$ satisfies 
\begin{equation} \label{eq:comp-reln}
\cls{C_\ua(Y)} = C_\ua(Y) = C_\ua(\cls{Y})
\end{equation}
for all $Y \subseteq M_2$. We will show that $B$ is a bond. By the above each set $B^\dagger(m_2) = C_\ua(\{m_2\})$ is closed. For any $X \subseteq G_1$ by definition one may see that 
\begin{equation} \label{eq:CB-rule}
X \subseteq C_\ua(Y) \iff Y \subseteq B_\ua(X)
\end{equation}
with either holding iff $B(x,y)$ holds for all $x \in X, y \in Y$. Hence by \eqref{eq:comp-reln} we have $\cls{Y} \subseteq B_\ua(X) \iff Y \subseteq B_\ua(X)$, and so each set $B_\ua(X)$ is closed, making each set $B(g) = B_\ua(\{g\})$ closed as required. 

 Conversely, given a bond $B$ define $C=B^\dagger$ and $R \colon M_2 \to M_1$ its intent relation. Then for all $Y \subseteq M_2$ just as in \eqref{eq:up-useful} one may see that $C_\ua(Y) = R(Y)'$ making each such set closed. Moreover this satisfies \eqref{eq:comp-reln} by Lemma \ref{lem:cxt-lem}. The composition $\bullet$ of $B_1 \colon G_1 \to M_2$ and $B_2 \colon G_2 \to M_3$ in \cite{jipsen2012categories} is
 \[
 (B_2 \bullet B_1)^\dagger(m) := (C_1)_\ua((C_2)_\ua(\{m\})')
 = (R_1)_\ua(R_2(m))'
 = R(m)' = (B_2 \circ B_1)^\dagger(m)
 \]
 for each $m \in M_3$, where $R$ is the intent relation for $(B_2 \circ B_1)$. Hence both categories coincide. 
\end{proof}

\begin{proof}[Proof of Proposition \ref{prop:tensor-main}]
The first part of the proof of  \cite[Theorem 37]{ganter1999formal} shows that for any $A \subseteq V_1 \times V_2$ we have 
\begin{equation}
\bigvee_{(x,y) \in A} f(x) \wedge g(y)
=
\bigwedge_{(w,z) \in A'} f(w) \vee g(z)
\end{equation}
We use this to define $h \colon V_1 \fcotimes V_2 \to M$ by
\begin{equation} \label{eq:meets}
h(A,B) = 
\bigvee_{(x,y) \in A} f(x) \wedge g(y)
=
\bigwedge_{(w,z) \in B} f(w) \vee g(z)
\end{equation}
for each concept $(A, B) \in V_1 \fcotimes V_2$. The verification that $h$ preserves suprema is just as in \cite[Theorem 37]{ganter1999formal}. 

We now check that \eqref{eq:tensor-h} is indeed satisfied. From the explicit definition of $\twedge$ in \cite{wille1985tensorial} we have that $x \twedge y = (A,B)$ where $(w,z) \in A$ whenever $w \leq x$ and $z \leq y$ or $w = 0$ or $z = 0$. Hence we have
\begin{align*}
h(x \twedge y)  &= \bigvee_{w \leq x, z \leq y} f(w) \wedge g(z) \vee \bigvee_{w \in V_1} f(w) \wedge g(0) \vee \bigvee_{z \in V_2} f(0) \wedge g(z) \\ 
&= (f(x) \wedge g(y)) \vee 0 \vee 0 = f(x) \wedge g(y)
\end{align*}

It remains for us to verify that $h$ is unique. Firstly, let us consider when $f$ and $g$ are complete lattice morphisms, and so preserve infima. In this case, by the symmetry of the definition \eqref{eq:meets}, $h$ does also, making it a complete lattice morphism as stated. Moreover since the subsets $\varepsilon_1(V_1)$ and $\varepsilon_2(V_2)$ generate $V_1 \fcotimes V_2$, $h$ is fully determined by \eqref{eq:tensor-h} and \eqref{eq:eps-from-join}, making it unique. 

In particular, taking $f = \varepsilon_1$ and $g = \varepsilon_2$ we must have $h = \id{V_1 \fcotimes V_2}$ since $x \twedge y = \varepsilon_1(x) \wedge \varepsilon_2(y)$. Now \eqref{eq:meets} tells us that for any $(A, B)$ in $V_1 \fcotimes V_2$ we have 
\begin{equation} \label{eq:concept-as-wedge}
(A,B) = \bigvee_{(x,y) \in A} x \twedge y
\end{equation}
Hence any sup-preserving map $V_1 \fcotimes V_2 \to M$ is determined entirely by its action on elements of the form $x \twedge y$, making $h$ unique in the general case.
\end{proof}

\begin{proof}[Proof of Theorem \ref{thm:fctensor-SMC}]
From the uniqueness in Lemma \ref{lem:main-tensor-result}, $\fcotimes$ preserves identities and composition, making it a bifunctor on $\SupLat$. Using Corollary \ref{cor:unique-compl}, we define $\alpha = \alpha_{U,V,W} \colon (U \fcotimes V) \fcotimes W \to U \fcotimes (V \fcotimes W)$ as the unique complete homomorphism with
\begin{align*}
\alpha \circ \ve_{U \fcotimes V} \circ \ve_U &= \ve_U \\ 
\alpha \circ \ve_{U \fcotimes V} \circ \ve_V &= \ve_{V \fcotimes W} \circ \ve_M \\ 
\alpha \circ \ve_W &= \ve_{V \fcotimes W} \circ \ve_W \\ 
\end{align*}
Similarly, we define $\alpha' \colon U \fcotimes (V \fcotimes W) \to (U \fcotimes V) \fcotimes W$ in the analogous way, and then since $\alpha' \circ \alpha$ preserves each of $\ve_{U \fcotimes V} \circ \ve_U$, $\ve_{U \fcotimes V} \circ \ve_V$ and $\ve_W$ it is the identity by uniqueness. Similarly $\alpha \circ \alpha' = \id{}$, making $\alpha$ an isomorphism. By construction  we have 
\[
\alpha((x \twedge y) \twedge z) = x \twedge (y \twedge z)
\]
for all $x \in U, y\in V, z \in W$. A quick calculation shows that for any $f_i \colon V_i \to W_i$ for $i=1, 2, 3$ we have that $(\alpha_{W} \circ (f_1 \fcotimes f_2) \fcotimes f_3)$ and $(f_1 \fcotimes (f_2 \fcotimes f_3) \circ \alpha_{V})$ are equal on elements of the form $((x \twedge y) \twedge z)$. But such elements are sup-dense in $(V_1 \fcotimes V_2) \fcotimes V_3$,  since elements of the form $x \twedge y$ are sup-dense in $V_1 \fcotimes V_2$ by \eqref{eq:concept-as-wedge}. Hence the $\alpha$ are natural. We define the right unitor 
\[
\rho \colon V \fcotimes \TV \to V
\]
to be the unique complete lattice homomorphism with $\rho \circ \ve_V = \id{V}$ and $\rho \circ \ve_\TV(0) = 0$ and $\rho \circ \ve_\TV(1) = 1$. Then we have $\rho(x \twedge 1) = x$ while $\rho(x \twedge 0) = \rho(0) = 0$ for all $x \in V$. Then one may check that $\rho$ is  an isomorphism, and using elements again that $f \circ \rho_V = \rho_W \circ (f \fcotimes \id{\TV})$ for any $f \colon V \to W$ in $\SupLat$, establishing naturality. The left unitor is defined similarly. The symmetry
\[
\sigma \colon V \fcotimes W \to W \fcotimes V
\]
is the unique complete homomorphism with $\sigma \circ \ve_1 = \ve_2$ and $\sigma \circ \ve_2 = \ve_1$. Equivalently this means that $\sigma(x \twedge y) = y \twedge x$ for all $x \in V$, $y \in W$, and then naturality from the definition of $f \fcotimes g$. Moreover we have $\sigma_{W,V} \circ \sigma_{V,W} = \id{V \fcotimes W}$ due to preservation of $\ve_1$ and $\ve_2$ and so $\sigma$ is a symmetry. 

Verifying the coherence conditions is straightforward using elements and that maps from a tensor are determined by elements of the form $x_1 \twedge x_2 \twedge \dots \twedge x_n$ (after bracketing). For example, the triangle law follows from the fact that 
\begin{align*}
(\rho \fcotimes \id{}) \circ \alpha (x \twedge (y \twedge z)) 
&=
(\rho \fcotimes \id{}) ((x \twedge y) \twedge z)
=
\rho(x \twedge y) \twedge z 
\\
&=
(x \wedge y) \twedge z 
=
x \twedge (y \wedge z)
=
(\id{} \fcotimes \lambda) (x \twedge (y \twedge z)) 
\end{align*}
whenever $y \in \{0,1\}$. Hence $\SupLat$ is a symmetric monoidal category. By construction the coherence maps belong to $\CompLat$, and the bifunctor restricts there by Lemma \ref{lem:main-tensor-result}, making $\CompLat$ a symmetric monoidal subcategory.

We now wish to establish the monoidal equivalence \eqref{eq:new-mon-equiv}. 
For any pair of contexts $\K_1, \K_2$, by definition, the elements of $\clat(\K_1) \fco \clat(\K_2)$ are subsets $C \subseteq \clat(\K_1) \times \clat(\K_2)$ which are closed in the appropriate sense. By (the proof of) \cite[Theorem 26]{ganter1999formal} there is a canonical isomorphism $\phi = \phi_{\K_1, \K_2} \colon \clat(\K_1) \fco \clat(\K_2) \to \clat(\K_1 \times \K_2)$ defined by
\begin{equation} \label{eq:phi-isom}
\phi(C) := \bigvee_{(A_1,A_1'),(A_2,A_2') \in C} (\cls{A_1 \times A_2}, (A_1 \times A_2)')
\end{equation}
for each such $C \subseteq \clat(\K_1) \times \clat(\K_2)$, where $(-)'$ and $\cls{(-)}$ are taken in $\K_1 \times \K_2$.

Now, for any concepts $(A,A') \in \clat(\K_1)$ and $(B,B') \in \clat(\K_2)$, from the explicit definition of $\owedge$, we have that $((A_1, A_1'),(A_2, A_2')) \in (A,A') \twedge (B,B')$ whenever $A_1 \leq A$ and $A_2 \leq B$, or $(A_1, A_2) \in (M_1 \times M_2)'$. It follows that we have 
\[
\phi((A,A') \twedge (B,B')) = (\cls{A \times B}, (A \times B)')
\]
Hence for all morphisms $f \colon V_1 \to W_1$ and $g \colon V_2 \to W_2$ in $\SupLat$ we have 
\begin{align*}
(\clat(f \fco g) \circ \phi_{V_1, V_2})((A,A') \twedge (B,B')) 
&= 
\clat(f \fco g)(\cls{A \times B}, (A \times B)')
\\ &=
\cls{(f \fco g)(\cls{A \times B})}
\\ &=
\cls{\cls{f(A)} \times \cls{g(B)}}
\\ &=
\phi(\cls{f(A)} \twedge \cls{g(B)})
\\ &=
\phi_{W_1,W_2} \circ (\clat(f) \fco \clat(g))((A,A') \twedge (B,B'))
\end{align*}
where we used Lemma \ref{lem:tens-help} several times in the third step. Since elements of the form $(A,A') \twedge (B,B')$ are sup-dense by \eqref{eq:concept-as-wedge} it follows that the isomorphisms $\phi$ are natural. We have  $\clat(\Ictx) \simeq \pset(\{\star\}) \simeq \TV$. We omit the verification of the monoidal coherence equations, from which it follows that $\clat(-)$ is a monoidal functor, and hence the equivalence a monoidal one.

 Finally, on any context $\K$, from \eqref{eq:suplatfuncdef} and the definition of $\discard{\K} = R$ we have that $\clat(\discard{\K})$ maps a concept $(A, A')$ to $0$ iff $R(A)' = R^{*\bullet}(A') = 0$, which holds iff $A' = M$, that is iff $(A,A') = 0$ in $\clat(\K)$. Hence $\clat$ preserves discarding.
\end{proof}

\end{document}

For any context $\K = (G, M, \I)$ we define the \emph{dual context} 
\[
\K^* := (M, G, \I^\dagger)
\]
Recall that a category $\catC$ is self-dual when it comes with an equivalence $(-)^* \colon \catC^\op \simeq \catC$ for which $A^** = A$ for all objects $A$ and $f^** = f$ for all morphisms $f$.

\begin{lemma} \label{lem:self-dual}
The category $\Cxt$ has a self-duality given by $\K \mapsto \K^*$ and on morphisms $R \colon G_1 \to G_2$ by defining $R^* \colon M_2 \to M_1$ as
\[
R^*(m) = R^\bullet(m')'
\]
for $\m \in M_2$.
\end{lemma}
\begin{proof}
By definition $R^*(m)$ is closed for all $m \in M_2$. Now for any closed $B \subseteq M_1$ we have $m \in R^{*\bullet}(B)$ iff $R^\bullet(m')' \subseteq B$. Since $R^\bullet(m')$  is closed this holds iff $B' \subseteq R^\bullet(m')$ iff $R(B') \subseteq m'$ iff $m \in R(B')'$. Hence $R^{*\bullet}(B) = R(B')'$ making it closed as required, so $R^*$ is a valid morphism. By definition then $R^{**}(g) = R^{*\bullet}(g')'=R(g'')''=R(g)$ and so $R^** = R$ for any closed relation $R$. It is easy to check that $(-)^*$ preserves identities. 

To check that $(-)^*$ is functorial. For closed relations $R, S$ and closed subsets $Z$ one may check that $(S \circ R)^\bullet(Z) = R^\bullet(S^\bullet(Z))$.

\end{proof}